\numberwithin{equation}{section}
\titleformat{\section}[block]{\bfseries\filcenter}
{{\upshape\thesection\enspace}}{.5em}{}
\titleformat{\subsection}[block]{\filcenter}
{{\upshape\thesubsection\enspace}}{.5em}{} 
\setlist{nosep}  
\newcommand{\N}{\mathbb{N}}     
\newcommand{\Q}{\mathbb{Q}}     
\newcommand{\R}{\mathbb{R}}     
\newcommand{\C}{\mathbb{C}}     
\newcommand{\Prob}{\mathbb{P}}  
\newcommand{\Exp}{\mathbb{E}}   
\newcommand{\st}{\,:\,}         
\newcommand{\norm}[1]{\left|\left|#1\right|\right|}              
\newcommand{\triplet}[3]{\left( #1, #2, #3 \right) }             
\newcommand{\ProbSpace}{\triplet{\Omega}{\mathscr{F}}{\Prob}}    
\newcommand{\abs}[1]{\left| #1 \right|}                          
\renewcommand{\qedsymbol}{$\square$}                       
\newcommand{\Levy}{L\'{e}vy} 
\newcommand{\cadlag}{c\`{a}dl\`{a}g }
\newcommand{\Ito}{It\^{o}}
\newcommand{\defeq}{\mathrel{\mathop:}=}                         
\newcommand\restr[2]{{
  \left.\kern-\nulldelimiterspace 
  #1 
  \vphantom{\big|} 
  \right|_{#2} 
  }}
\theoremstyle{plain} 
\newtheorem{theo}{Theorem}[section]    
\newtheorem{prop}[theo]{Proposition} 
\newtheorem{coro}[theo]{Corollary}
\newtheorem{lemm}[theo]{Lemma}
\newtheorem{assu}[theo]{Assumption}
\newtheorem{rema}[theo]{Remark}
\theoremstyle{definition} 
\newtheorem{defi}[theo]{Definition}
\newtheorem{exam}[theo]{Example}
\declaretheoremstyle[%
  spaceabove=-5pt,%
  spacebelow=6pt,%
  headfont=\normalfont\itshape,%
  postheadspace=1em,%
  qed=\qedsymbol%
]{mystyle} 
\declaretheorem[name={Proof},style=mystyle,unnumbered,
]{prf}
 \title{\Large Existence of Continuous and C\`{a}dl\`{a}g Versions for Cylindrical Processes in the Dual of a Nuclear Space}
\author{C. A. Fonseca-Mora}
\affil{\small\small School of Mathematics and Statistics,  The University of Sheffield, Hicks Building, \\
Hounsfield Road, S3 7RH, United Kingdom, \\

and \\

Escuela de Matem\'{a}tica, Universidad de Costa Rica, San Pedro de Montes de Oca, \\
San Jos\'{e}, Apartado Postal 11501-2060, Costa Rica. \\
\smallskip

Email: christianandres.fonseca@ucr.ac.cr
}
\date{}
\begin{document}



 \maketitle

\abstract{Let $\Phi$ be a nuclear space and let $\Phi'_{\beta}$ denote its strong dual. In this paper we introduce sufficient conditions for a cylindrical process in $\Phi'$ to have a version that is a $\Phi'_{\beta}$-valued continuous or c\'{a}dl\'{a}g process. We also establish sufficient conditions for the existence of such a version taking values and having finite moments in a Hilbert space continuously embedded in $\Phi'_{\beta}$. Finally, we apply our results to the study of properties of cylindrical martingales in $\Phi'$.}

\smallskip

\emph{2010 Mathematics Subject Classification:} 60B11, 60G20,60G17, 28C20.

\emph{Key words and phrases:} Cylindrical stochastic processes, dual of a nuclear space, continuous and c\'{a}dl\'{a}g versions, cylindrical martingales.

\section{Introduction}

Probability theory in infinite dimensional spaces has been an area under intensive development since the 1960s. There are several monographs devoted to the study of stochastic processes and measures defined on these spaces; we can cite for example  \cite{Badrikian, DaleckyFomin,  Heyer, Ito, LedouxTalagrand, Linde, SchwartzRM, VakhaniaTarieladzeChobanyan}. 

Among the range of topics within the theory of probability on infinite dimensional spaces is the study of cylindrical probability measures, that is, finitely additive set functions that have ``projections'' on finite dimensional spaces that are probability measures. In general, cylindrical probability measures do not necessarily extend to a countably additive probability measures; the investigation of this problem led to the celebrated discoveries of Minlos (see \cite{Minlos:1963}) and Sazonov (see \cite{Sazonov:1958}) in the context of duals of countably Hilbertian nuclear spaces and of Hilbert spaces respectively, which establish necessary and sufficient conditions in terms of the continuity of the Fourier transform of a cylindrical measure for this extension to be possible. 

Associated to the concept of cylindrical measures is that of cylindrical random variables. These are similar to the usual random variables but their laws are cylindrical, rather than bona fide, probability measures. 
Recently, cylindrical processes, i.e. collections of cylindrical random variables indexed by time, are increasingly attracting attention because they appear naturally as the driving noise of stochastic partial differential equations (see e.g. \cite{ApplebaumRiedle:2010, KallianpurXiong, PeszatZabczyk, PriolaZabczyk:2011, Riedle:2014, Riedle:2015, vanNeervenVeraarWeis:2008}).

Parallel to the problem of  finding conditions for the extension of cylindrical probability measures to countably additive probability measures is the problem of finding a random variable $Y$ in the underlying space that is a version of a given  cylindrical random variable $X$, where here ``version'' means that $Y$ coincides almost surely with $X$ when evaluated with respect to any element of the dual space. In the case of the dual of a nuclear space, the regularization theorem of \Ito{} and Nawata (see \cite{ItoNawata:1983}) established necessary and sufficient conditions for the existence of such a version for a cylindrical random variable. As a consequence of the regularization theorem we can establish sufficient conditions for the existence of an stochastic process that is a version of a given cylindrical process. 

In \cite{Mitoma:1983}, Mitoma addressed the more specialized problem of establishing sufficient conditions for an stochastic process taking values in the dual of a nuclear Fr\'{e}chet space to have a continuous or a right-continuous with left limits version. There are several applications for this result. Some examples are the study of self-intersection local times of density processes in $\mathcal{S'}(\R^{d})$ (see \cite{BojdeckiTalarczyk:2005}) and the study of regularity properties of the limit process of many-server queueing systems (see \cite{KaspiRamanan:2013}). By using different techniques, several authors extended Mitoma's work by establishing different conditions on the nuclear space and on the stochastic processes (see \cite{Fernique:1989, Fouque:1984, Martias:1988}). 

In this paper we carried out a further extension by showing that under some natural conditions a cylindrical processes in the dual of a nuclear space (no other conditions are assumed on the space) has a continuous or a right-continuous with left limits version. To the extend of our knowledge our result is the more general result of its type that can be found on the literature. Moreover, our result is a natural generalization of the regularization theorem of \Ito{} and Nawata for cylindrical random variables.  

As well as from proving this result, we also introduce some new results on the existence of continuous and c\`{a}dl\`{a}g versions taking values in a Hilbert space continuously embedded in the dual of a nuclear space. Then we apply our results to the existence of continuous and c\`{a}dl\`{a}g versions for stochastic processes and to the study of martingales taking values in the dual of a nuclear space. 

We remark that we have successfully applied the results obtained in this paper to other problems, for example to prove the \Levy -\Ito{} decomposition for L\'{e}vy processes taking values in the strong dual of a reflexive nuclear space and to the introduction of a new theory of stochastic integration for operator-valued processes with respect to L\'{e}vy processes on the dual of a nuclear space; both of these are furthermore applied to the study of stochastic evolution equations driven by L\'{e}vy noise in duals of nuclear spaces (see \cite{FonsecaMoraThesis}). These results will be published elsewhere. 

The organization of the paper is as follows. In Section \ref{sectionPrelim} we review the basic concepts on nuclear spaces and of random processes defined on its strong dual that we will need throughout this paper. Section \ref{sectionRegulTheorem} is devoted to the proof of our main result (Theorem \ref{theoRegularizationTheoremCadlagContinuousVersion}). In Section \ref{sectionContCadVersionHilbertSpace} we prove several modifications of our result for the existence of continuous and c\`{a}dl\`{a}g versions taking values in a Hilbert space continuously embedded in the dual of a nuclear space. Finally, in Section \ref{sectionApplicaStochProcesses} we apply our results to study cylindrical martingales in the dual of a nuclear space.

\section{Preliminaries} \label{sectionPrelim}

\subsection{Nuclear Spaces} \label{subsectionNuclSpace}

In this section we introduce our notation and review some of the key concepts that we will need throughout this paper. For detailed information on locally convex spaces the reader is referred to \cite{Jarchow, Schaefer, Treves}.  

Let $\Phi$ be a locally convex space (over $\R$ or $\C$). If $p$ is a continuous semi-norm on $\Phi$ and $r>0$, the closed ball of radius $r$ of $p$ given by $B_{p}(r) = \left\{ \phi \in \Phi: p(\phi) \leq r \right\}$ is a closed, convex, balanced neighborhood of zero in $\Phi$. 

A continuous semi-norm (respectively a norm) $p$ on $\Phi$ is called \emph{Hilbertian} if $p(\phi)^{2}=Q(\phi,\phi)$, for all $\phi \in \Phi$, where $Q$ is a symmetric, non-negative bilinear form (respectively inner product) on $\Phi \times \Phi$. Let $\Phi_{p}$ be the Hilbert space that corresponds to the completion of the pre-Hilbert space $(\Phi / \mbox{ker}(p), \tilde{p})$, where $\tilde{p}(\phi+\mbox{ker}(p))=p(\phi)$ for each $\phi \in \Phi$. The quotient map $\Phi \rightarrow \Phi / \mbox{ker}(p)$ has an unique continuous linear extension $i_{p}:\Phi \rightarrow \Phi_{p}$.  

Let $q$ be another continuous Hilbertian semi-norm on $\Phi$ for which $p \leq q$. In this case, $\mbox{ker}(q) \subseteq \mbox{ker}(p)$. Moreover, the inclusion map from $\Phi / \mbox{ker}(q)$ into $\Phi / \mbox{ker}(p)$ is linear and continuous, and therefore it has a unique continuous extension $i_{p,q}:\Phi_{q} \rightarrow \Phi_{p}$. Furthermore, we have the following relation: $i_{p}=i_{p,q} \circ i_{q}$. 

We denote by $\Phi'$ the topological dual of $\Phi$ and by $f[\phi]$ the canonical pairing of elements $f \in \Phi'$, $\phi \in \Phi$. We denote by $\Phi'_{\beta}$ the dual space $\Phi'$ equipped with its \emph{strong topology} $\beta$, i.e. $\beta$ is the topology on $\Phi'$ generated by the family of semi-norms $\{ \eta_{B} \}$, where for each $B \subseteq \Phi'$ bounded we have $\eta_{B}(f)=\sup \{ \abs{f[\phi]}: \phi \in B \}$ for all $f \in \Phi'$.  If $p$ is a continuous Hilbertian semi-norm on $\Phi$, then we denote by $\Phi'_{p}$ the Hilbert space dual to $\Phi_{p}$. The dual norm $p'$ on $\Phi'_{p}$ is given by $p'(f)=\sup \{ \abs{f[\phi]}:  \phi \in E, \, p(\phi) \leq 1 \}$ for all $ f \in \Phi'_{p}$. Moreover, the dual operator $i_{p}'$ corresponds to the canonical inclusion from $\Phi'_{p}$ into $\Phi'_{\beta}$ and it is linear and continuous. 

Let $p$ and $q$ be continuous Hilbertian semi-norms on $\Phi$ such that $p \leq q$.
The space of continuous linear operators (respectively Hilbert-Schmidt operators) from $\Phi_{q}$ into $\Phi_{p}$ is denoted by $\mathcal{L}(\Phi_{q},\Phi_{p})$ (respectively $\mathcal{L}_{2}(\Phi_{q},\Phi_{p})$) and the operator norm (respectively Hilbert-Schmidt norm) is denote by $\norm{\cdot}_{\mathcal{L}(\Phi_{q},\Phi_{p})}$ (respectively $\norm{\cdot}_{\mathcal{L}_{2}(\Phi_{q},\Phi_{p})}$). We employ an analogue notation for operators between the dual spaces $\Phi'_{p}$ and $\Phi'_{q}$. 

Among the many equivalent definitions of a nuclear space (see \cite{Pietsch, Treves}),  the following is the most useful for our purposes. 

\begin{defi}
A (Hausdorff) locally convex space $(\Phi,\mathcal{T})$ is called \emph{nuclear} if its topology $\mathcal{T}$ is generated by a family $\Pi$ of Hilbertian semi-norms such that for each $p \in \Pi$ there exists $q \in \Pi$, satisfying $p \leq q$ and the canonical inclusion $i_{p,q}: \Phi_{q} \rightarrow \Phi_{p}$ is Hilbert-Schmidt. 
\end{defi}

\begin{exam} 
The following locally convex spaces are nuclear (see \cite{Treves}, Chapter 51 and \cite{Pietsch}, Chapter 6): the (Schwartz) space of rapidly decreasing functions $\mathscr{S}(\R^{d})$ and the space of tempered distributions $\mathscr{S}'(\R^{d})$; the space $\mathscr{D}(X)$ of test functions on $X$ and the space of distributions $\mathscr{D}'(X)$  ($X$: open subset of $\R^{d}$); the space $\mathcal{C}^{\infty}(U)$ of infinitely differentiable real or complex functions on $U$ ($U$: open subset of $\R^{d}$) and the space $\mathcal{H}(G)$ of holomorphic functions on $G$ ($G$: open subset of $\C^{n}$).  
\end{exam}

\begin{defi}
Let $\Phi$ be a vector space and let $\Pi$ by a family of Hilbertian semi-norms on $\Phi$ such that for each $p \in \Pi$ the Hilbert space $\Phi_{p}$ is separable. The locally convex topology $\mathcal{T}$ on $\Phi$ generated by the family $\Pi$  is called \emph{multi-Hilbertian} (respectively \emph{countably Hilbertian} if $\Pi$ is countable). The space $(\Phi,\mathcal{T})$ is called a  \emph{multi-Hilbertian} space (respectively \emph{countably Hilbertian}).  
\end{defi}

If $(\Phi,\mathcal{T})$ is a nuclear space and $p$ is a continuous Hilbertian semi-norm on $\Phi$, the Hilbert space $\Phi_{p}$ is separable (see \cite{Pietsch}, Proposition 4.4.9 and Theorem 4.4.10, p.82). Therefore, every nuclear space is multi-Hilbertian.  

Let $(\Phi,\mathcal{T})$ be a multi-Hilbertian space and let $\{ p_{n} \}_{n \in \N}$ be an increasing sequence of continuous Hilbertian semi-norms on $(\Phi,\mathcal{T})$. Denote by $\theta$ the countably Hilbertian topology on $\Phi$ generated by the semi-norms 
$\{ p_{n} \}_{n \in \N}$. The topology $\theta$ is weaker than $\mathcal{T}$. We denote by $\Phi_{\theta}$ the space $(\Phi,\theta)$. Some properties of the space $\Phi_{\theta}$ are listed below:

\begin{prop} \label{propWeakerCountablyHilbertianTopology} In the above notation,  
\begin{enumerate}
\item $\Phi_{\theta}$ is a separable pseudo-metrizable locally convex space and $d:\Phi \rightarrow \R$ given by $d(\phi)=\sum_{n \in \N} 2^{-n} [p_{n}(\phi)/(1+p_{n}(\phi))]$ for $\phi \in \Phi$ is a pseudo-metric for $\Phi_{\theta}$.    
\item The completion $\widetilde{\Phi_{\theta}}$ of $\Phi_{\theta}$ is separable, complete, pseudo-metrizable space and it is furthermore a Baire space. 
\item 
\begin{equation} \label{dualOfWeakerCountablyHilbertianTopology}
(\widetilde{\Phi_{\theta}})'= \Phi'_{\theta}=\bigcup_{n \in \N} \Phi'_{p_{n}}. 
\end{equation}
\end{enumerate}
\end{prop}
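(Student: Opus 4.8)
The plan is to treat parts (1) and (2) as essentially standard facts about a locally convex topology generated by a countable, increasing family of Hilbertian semi-norms, and to reserve the genuine computation for part (3), which identifies the dual; throughout one must keep in mind that $\theta$ need not be Hausdorff (the $p_{n}$ need not separate points), so every statement is made in the pseudo-metric rather than the metric category. For part (1), I would first verify that $d$ is an $F$-seminorm whose induced translation-invariant pseudo-metric $(\phi,\psi) \mapsto d(\phi-\psi)$ generates $\theta$: convergence of the series is immediate from $p_{n}/(1+p_{n}) \leq 1$ and $\sum_{n} 2^{-n} < \infty$; the triangle inequality follows because $t \mapsto t/(1+t)$ is nondecreasing and subadditive on $[0,\infty)$ together with subadditivity of each $p_{n}$; and symmetry uses $p_{n}(-\phi)=p_{n}(\phi)$. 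That the $d$-balls and the finite intersections of the balls $B_{p_{n}}(r)$ form equivalent neighbourhood bases of $0$ is the usual two-sided comparison, so $d$ metrizes $\theta$. For separability, I would observe that $\theta$ is by definition the initial topology on $\Phi$ with respect to the canonical maps $i_{p_{n}}: \Phi \to \Phi_{p_{n}}$; since each $\Phi_{p_{n}}$ is a separable (hence second-countable) metric space and the family $\{i_{p_{n}}\}_{n \in \N}$ is countable, the preimages of a countable base in each $\Phi_{p_{n}}$ furnish a countable subbase, and hence a countable base, for $\theta$, so $\Phi_{\theta}$ is second countable and in particular separable.

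For part (2), completeness, pseudo-metrizability and separability of the completion are the standard properties of completions of pseudo-metric spaces: $d$ extends to a complete pseudo-metric on $\widetilde{\Phi_{\theta}}$, and the image of a countable dense subset of $\Phi_{\theta}$ remains dense. For the Baire property I would invoke the Baire category theorem, whose classical nested-ball proof uses only completeness of the pseudo-metric and goes through verbatim without any Hausdorff assumption; alternatively one passes to the quotient metric space, which is complete and therefore Baire.

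For part (3) I would split the claimed chain of equalities. The equality $(\widetilde{\Phi_{\theta}})' = \Phi'_{\theta}$ holds because $\Phi_{\theta}$ is dense in $\widetilde{\Phi_{\theta}}$: every $\theta$-continuous linear functional is uniformly continuous and $\R$ (or $\C$) is complete, so it extends uniquely to a continuous functional on the completion, and restriction inverts this extension. For the second equality the key step is the characterization of $\theta$-continuity: a linear functional $f$ on $\Phi$ is $\theta$-continuous if and only if there exist $n \in \N$ and $C>0$ with $\abs{f[\phi]} \leq C\, p_{n}(\phi)$ for all $\phi \in \Phi$. The ``only if'' is the standard domination-by-finitely-many-semi-norms criterion for continuity, and since $\{p_{n}\}_{n \in \N}$ is increasing a finite maximum collapses to a single $p_{n}$; the ``if'' is immediate. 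Given such a bound, $f$ vanishes on $\ker(p_{n}) = \ker(i_{p_{n}})$ and hence factors as $f = g \circ i_{p_{n}}$ for a unique $g \in \Phi'_{p_{n}}$, i.e. $f = i_{p_{n}}'(g) \in \Phi'_{p_{n}}$ under the canonical inclusion $i_{p_{n}}': \Phi'_{p_{n}} \hookrightarrow \Phi'_{\beta}$; conversely every element of $\Phi'_{p_{n}}$ satisfies such a bound with $C = p_{n}'(g)$. Since $p_{n} \leq p_{n+1}$ induces consistent inclusions $\Phi'_{p_{n}} \hookrightarrow \Phi'_{p_{n+1}}$ compatible with the embeddings into $\Phi'$, the set $\bigcup_{n \in \N} \Phi'_{p_{n}}$ is a well-defined increasing union of subspaces of $\Phi'$, and the characterization above yields $\Phi'_{\theta} = \bigcup_{n \in \N} \Phi'_{p_{n}}$.

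The main obstacle is the continuity characterization in part (3): passing from abstract $\theta$-continuity to a clean estimate by a single $p_{n}$ and then factoring the functional through the Hilbert space $\Phi_{p_{n}}$. Everything else is bookkeeping, the only point demanding care being to carry out parts (1) and (2) in the pseudo-metric, possibly non-Hausdorff, setting rather than tacitly assuming that the $p_{n}$ separate points.
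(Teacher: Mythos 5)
Your proposal is correct and follows essentially the same route as the paper's proof: parts (1) and (2) rest on the standard theory of topologies generated by countably many semi-norms and on completeness/Baire properties of pseudo-metric spaces, and part (3) is obtained by identifying $\Phi'_{\theta}$ with $\bigcup_{n \in \N} \Phi'_{p_{n}}$ via domination of a continuous functional by a single $p_{n}$ (using that the sequence is increasing) together with extension of functionals to the completion. The only difference is one of presentation: the paper delegates these standard facts to references (Narici--Beckenstein, Jarchow, Pietsch), whereas you supply the underlying arguments directly, including the correct handling of the possibly non-Hausdorff pseudo-metric setting.
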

\begin{prf}
\emph{(1)} The fact that $\Phi_{\theta}$ is pseudo-metrizable and that $d$ is a pseudo-metric is a consequence that its topology is generated by a countable number of semi-norms (see \cite{NariciBeckenstein}, Theorem 5.6.1, p.123). The fact that $\Phi_{\theta}$ is separable is a consequence of the fact that $\{ p_{n} \}_{n \in \N}$ generates the topology $\theta$ and that $\Phi_{p_{n}}$ is separable for each $n \in \N$ (see \cite{Pietsch}, Theorem 4.4.10, p.82-3). 

\emph{(2)} The completion $\widetilde{\Phi_{\theta}}$ of $\Phi_{\theta}$ is again a pseudo-metrizable locally convex space (see \cite{NariciBeckenstein}, Theorem 3.7.1, p.60) and hence by (an extension of) the Baire category theorem (see \cite{NariciBeckenstein}, Theorem 11.7.2, p.393) it follows that $\widetilde{\Phi_{\theta}}$ is a Baire space.  

\emph{(3)} It is well-know that the dual space of $\Phi_{\theta}$ is equal to $\bigcup_{n \in \N} \Phi'_{p_{n}}$. On the other hand, the dual space $(\widetilde{\Phi_{\theta}})'$ of $\widetilde{\Phi_{\theta}}$ can be identified (algebraically) with the dual space $\Phi'_{\theta}$ of $\Phi_{\theta}$ (see \cite{Jarchow}, Corollary 4, Section 3.4, p.63), thus we obtain \eqref{dualOfWeakerCountablyHilbertianTopology}. 
\end{prf} 

\begin{rema}
Even if $(\Phi,\mathcal{T})$ is Hausdorff this does not in general implies that $\Phi_{\theta}$ is Hausdorff. However, if $\Phi_{\theta}$ is Hausdorff the pseudo-metric $d$ defined in Proposition \ref{propWeakerCountablyHilbertianTopology} is a metric and hence $\Phi_{\theta}$ is metrizable (see \cite{Jarchow}, Theorem 1, Section 2.8, p.40). Moreover, in that case $\widetilde{\Phi_{\theta}}$ is a Fr\'{e}chet space and it is (isomorphic to) the projective limit $\mbox{proj}_{n \in \N} \, \Phi_{p_{n}}$ (see \cite{Jarchow}, Corollary 7, Section 3.4, p.63). 
\end{rema}

\subsection{Cylindrical and Stochastic Processes} \label{subSectionCylAndStocProcess}

\begin{assu}
Unless otherwise specified, in this section $\Phi$ will always denote a multi-Hilbertian space over $\R$.
\end{assu} 
 
Let $\ProbSpace$ be a complete probability space. We denote by $L^{0} \ProbSpace$ the space of equivalence classes of real-valued random variables defined on $\ProbSpace$. We always consider the space $L^{0} \ProbSpace$ equipped with the topology of convergence in probability and in this case it is a complete, metrizable, topological vector space, but is not in general locally convex (see e.g. \cite{Badrikian}). 

A Borel measure $\mu$ on $\Phi'_{\beta}$ is called a \emph{Radon measure} if for every $\Gamma \in \mathcal{B}(\Phi'_{\beta})$ and $\epsilon >0$, there exist a compact set $K_{\epsilon} \subseteq \Gamma$ such that $\mu(\Gamma \backslash K_{\epsilon}) < \epsilon$. In general not every Borel measure on $\Phi$ is Radon, however, when $\Phi$ is a Fr\'{e}chet nuclear space or a countable inductive limit of Fr\'{e}chet nuclear spaces, then every Borel measure on $\Phi'_{\beta}$ is a Radon measure (see Corollary 1.3 of Dalecky and Fomin \cite{DaleckyFomin}, p.11).

For any $n \in \N$ and any $\phi_{1}, \dots, \phi_{n} \in \Phi$, we define a linear map $\pi_{\phi_{1}, \dots, \phi_{n}}: \Phi' \rightarrow \R^{n}$ by
\begin{equation} \label{defiMapProjectionCylinder}
\pi_{\phi_{1}, \dots, \phi_{n}}(f)=(f[\phi_{1}], \dots, f[\phi_{n}]), \quad \forall \, f \in \Phi'. 
\end{equation}
The map $\pi_{\phi_{1}, \dots, \phi_{n}}$ is clearly linear and continuous. Let $M$ be a subset of $\Phi$. A subset of $\Phi'$ of the form 
\begin{equation} \label{defiCylindricalSet}
\mathcal{Z}\left(\phi_{1}, \dots, \phi_{n}; A \right) = \left\{ f \in \Phi'\st \left(f[\phi_{1}], \dots, f[\phi_{n}]\right) \in A \right\}= \pi_{\phi_{1}, \dots, \phi_{n}}^{-1}(A)
\end{equation}
where $n \in \N$, $\phi_{1}, \dots, \phi_{n} \in M$ and $A \in \mathcal{B}\left(\R^{n}\right)$ is called a \emph{cylindrical set} based on $M$. The set of all the cylindrical sets based on $M$ is denoted by $\mathcal{Z}(\Phi',M)$. It is an algebra but if $M$ is a finite set then it is a $\sigma$-algebra. The $\sigma$-algebra generated by $\mathcal{Z}(\Phi',M)$ is denoted by $\mathcal{C}(\Phi',M)$ and it is called the \emph{cylindrical $\sigma$-algebra} with respect to $(\Phi',M)$. 
If $M=\Phi$, we write $\mathcal{Z}(\Phi')=\mathcal{Z}(\Phi',\Phi)$ and $\mathcal{C}(\Phi')=\mathcal{C}(\Phi',\Phi)$\index{.c CPhi@$\mathcal{C}(\Phi')$}. One can easily see from \eqref{defiCylindricalSet} that $\mathcal{Z}(\Phi'_{\beta}) \subseteq \mathcal{B}(\Phi'_{\beta})$. Therefore, $\mathcal{C}(\Phi'_{\beta}) \subseteq \mathcal{B}(\Phi'_{\beta})$. In general this inclusion is strict but if $\Phi$ is separable (for example if it is a countably Hilbertian space) then $\mathcal{C}(\Phi'_{\beta}) = \mathcal{B}(\Phi'_{\beta})$ (see Lemma 4.1 in Mitoma, Okada and Okazaki \cite{MitomaOkadaOkazaki:1977}). 

A function $\mu: \mathcal{Z}(\Phi') \rightarrow [0,\infty]$ is called a \emph{cylindrical measure} on $\Phi'$, if for each finite subset $M \subseteq \Phi'$ the restriction of $\mu$ to $\mathcal{C}(\Phi',M)$ is a measure. A cylindrical measure $\mu$ is called \emph{finite} if $\mu(\Phi')< \infty$ and a \emph{cylindrical probability measure} if $\mu(\Phi')=1$. The complex-valued function $\widehat{\mu}: \Phi \rightarrow \C$ defined by 
$$ \widehat{\mu}(\phi)= \int_{\Phi'} e^{i f[\phi]} \mu(df) = \int_{-\infty}^{\infty} e^{iz} \mu_{\phi}(dz), \quad \forall \, \phi \in \Phi, $$
where for each $\phi \in \Phi$, $\mu_{\phi} \defeq \mu \circ \pi_{\phi}^{-1}$, is called the \emph{characteristic function}\index{characteristic function} of $\mu$. In general, a cylindrical measure on $\Phi'$ does not extend to a Borel measure on $\Phi'_{\beta}$. However, necessary and sufficient conditions for this can be given in terms of the continuity of its characteristic function by means of the Minlos theorem (see \cite{DaleckyFomin}, Theorem 1.3, Chapter III, p.88).

\begin{theo}[Minlos theorem] \label{minlosTheorem}
Let $\Phi$ be a nuclear space. For a function $F:\Phi \rightarrow \C$ to be the characteristic function of a Radon probability measure on $\Phi'_{\beta}$ it is sufficient, and necessary if $\Phi$ is barrelled, that it be positive definite, continuous at zero and satisfies $F(0)=1$. 
\end{theo}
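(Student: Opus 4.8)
The plan is to prove the sufficiency in full, since that is where nuclearity does the real work, and then dispatch the necessity (under barrelledness) by a short tightness-plus-equicontinuity argument. Throughout write $F$ for the given function. First I would manufacture a cylindrical probability measure $\mu$ on $\Phi'$ with $\widehat{\mu}=F$. For each finite family $\phi_{1},\dots,\phi_{n}\in\Phi$ the function $(t_{1},\dots,t_{n})\mapsto F\!\left(\sum_{k}t_{k}\phi_{k}\right)$ is positive definite, continuous at the origin and equal to $1$ there, so the classical Bochner theorem on $\R^{n}$ yields a probability measure $\mu_{\phi_{1},\dots,\phi_{n}}$ on $\R^{n}$ having it as characteristic function. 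These finite–dimensional laws are consistent under the projections $\pi_{\phi_{1},\dots,\phi_{n}}$, so they glue into a cylindrical probability measure $\mu$ on $\mathcal{Z}(\Phi')$ with $\widehat{\mu}=F$.

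The hard part is to show that $\mu$ concentrates, in the cylindrical sense, on sets that are compact in $\Phi'_{\beta}$. Given $\delta>0$ I would first fix $\eta=\delta/8$ and use continuity of $F$ at zero to pick a continuous Hilbertian semi-norm $p$ with $1-\mathrm{Re}\,F(\phi)\le\eta$ whenever $p(\phi)\le1$; positive definiteness upgrades this (via $1-\mathrm{Re}\,F(m\phi)\le m^{2}(1-\mathrm{Re}\,F(\phi))$) to the global bound $1-\mathrm{Re}\,F(\phi)\le 2\eta(1+p(\phi)^{2})$. Nuclearity now supplies $q\ge p$ with $i_{p,q}\in\mathcal{L}_{2}(\Phi_{q},\Phi_{p})$, and I would take $\phi_{k}\in\Phi$ so that $(i_{q}\phi_{k})_{k}$ is an orthonormal basis of $\Phi_{q}$ (possible since $i_{q}(\Phi)$ is dense). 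Smoothing the $n$-dimensional projection of $\mu$ against a centred Gaussian $\gamma_{t}$ of covariance $tI$ and applying Fubini gives
\[
\int_{\R^{n}}\Big(1-\mathrm{Re}\,F\big(\textstyle\sum_{k\le n}y_{k}\phi_{k}\big)\Big)\,\gamma_{t}(dy)=\int_{\Phi'}\Big(1-e^{-\frac{t}{2}\sum_{k\le n}f[\phi_{k}]^{2}}\Big)\,\mu(df).
\]
The left side is at most $2\eta\big(1+t\sum_{k\le n}p(\phi_{k})^{2}\big)\le 2\eta\big(1+t\,\norm{i_{p,q}}_{\mathcal{L}_{2}(\Phi_{q},\Phi_{p})}^{2}\big)$, the finiteness of the sum being exactly the Hilbert–Schmidt property; bounding the right side below on $\{\sum_{k\le n}f[\phi_{k}]^{2}\ge R^{2}\}$ and letting $n\to\infty$ yields
\[
\mu\big(\{f\st q'(f)\ge R\}\big)\le\frac{2\eta\,\big(1+t\,\norm{i_{p,q}}_{\mathcal{L}_{2}(\Phi_{q},\Phi_{p})}^{2}\big)}{1-e^{-tR^{2}/2}}.
\]
With $M\defeq\norm{i_{p,q}}_{\mathcal{L}_{2}(\Phi_{q},\Phi_{p})}$ already fixed by the choice of $\eta$, I would then set $t=1/M^{2}$ and $R=M\sqrt{2}$, forcing the numerator below $4\eta$ and the denominator above $\tfrac12$, so that $\mu(\{q'(f)\ge R\})\le 8\eta=\delta$.

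Next I would identify the relevant balls as compact sets. Since $\{f\st q'(f)\le R\}$ is (up to scaling) the polar of the zero-neighbourhood $B_{q}(1/R)$ in $\Phi$, it is equicontinuous, hence weak-$*$ relatively compact by the Alaoglu–Bourbaki theorem; and because $\Phi$ is nuclear, hence a Schwartz space, the weak-$*$ and strong topologies coincide on equicontinuous sets, so this ball is relatively compact in $\Phi'_{\beta}$. The estimate above therefore says precisely that $\mu$ is tight with respect to the compact subsets of $\Phi'_{\beta}$, and a standard projective-limit/Prokhorov extension argument produces a genuine Radon probability measure on $\Phi'_{\beta}$ whose cylindrical projections are the $\mu_{\phi_{1},\dots,\phi_{n}}$; by construction its characteristic function is $F$.

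For the necessity, assume $\Phi$ barrelled and $F=\widehat{\mu}$ for a Radon probability measure $\mu$. Then $F(0)=1$ is immediate, and positive definiteness follows from $\sum_{j,k}c_{j}\overline{c_{k}}F(\phi_{j}-\phi_{k})=\int_{\Phi'}\abs{\sum_{j}c_{j}e^{if[\phi_{j}]}}^{2}\mu(df)\ge0$. For continuity at zero, choose by Radonness a compact $K\subseteq\Phi'_{\beta}$ with $\mu(K^{c})<\epsilon$; barrelledness makes the bounded set $K$ equicontinuous, so there is a zero-neighbourhood $U$ with $\sup_{f\in K}\abs{f[\phi]}<\epsilon$ for $\phi\in U$, whence $\abs{1-F(\phi)}\le\int_{K}\abs{f[\phi]}\,d\mu+2\mu(K^{c})\le 3\epsilon$ on $U$. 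I expect the main obstacle to be the tightness step of the second paragraph — making the Hilbert–Schmidt norm appear in the estimate and correctly ordering the choices of $\eta$, $t$ and $R$ — together with the (standard but non-trivial) identification of equicontinuous balls as compact subsets of $\Phi'_{\beta}$.
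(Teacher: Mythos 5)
You should first note a structural point: the paper does not prove this theorem at all — it is quoted as a known result, with the proof deferred to Dalecky and Fomin (Theorem 1.3, Chapter III, p.~88). So there is no internal proof to compare against, and your proposal must be judged as a reconstruction of the classical argument; as such, it is essentially correct and follows the standard route. The skeleton is right in all its parts: Bochner's theorem on $\R^{n}$ plus consistency to build the cylindrical measure $\mu$ (note that positive definiteness together with continuity at zero gives continuity of each finite-dimensional restriction, so Bochner does apply); the inequality $1-\mathrm{Re}\,F(m\phi)\le m^{2}(1-\mathrm{Re}\,F(\phi))$ to globalize continuity at zero into $1-\mathrm{Re}\,F(\phi)\le 2\eta(1+p(\phi)^{2})$; Gaussian smoothing plus Fubini to make $\norm{i_{p,q}}_{\mathcal{L}_{2}(\Phi_{q},\Phi_{p})}$ appear (your choices $t=1/M^{2}$, $R=M\sqrt{2}$ do give numerator at most $4\eta$ and denominator at least $1-e^{-1}>\tfrac12$); compactness of the polar ball in $\Phi'_{\beta}$ via Alaoglu--Bourbaki together with the fact that a nuclear space is a Schwartz space, so the weak-$*$ and strong topologies agree on equicontinuous sets; and the necessity direction via Banach--Steinhaus (in a barrelled space, compact, hence bounded, subsets of $\Phi'_{\beta}$ are equicontinuous).

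The one step you state more loosely than it can actually be carried out is the tightness claim. Writing $\mu\big(\{f\st q'(f)\ge R\}\big)\le\delta$ after ``letting $n\to\infty$'' treats $\{f\st q'(f)\ge R\}$ as if it had a well-defined $\mu$-measure; but this set is not cylindrical, and $\mu$ is at this stage only finitely additive, so no monotone-continuity passage to the limit is available. What your estimate genuinely yields is $\mu(Z_{n})\ge 1-\delta$ for the basis-adapted cylinders $Z_{n}=\{f\st\sum_{k\le n}f[\phi_{k}]^{2}\le R^{2}\}$, whereas the hypothesis of the Prokhorov-type extension theorem you invoke is that \emph{every} cylindrical set containing the polar ball $K=\{f\st q'(f)\le R\}$ has measure at least $1-\delta$. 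Bridging the two requires a further finite-dimensional argument: given a cylindrical set $Z=\pi_{\psi_{1},\dots,\psi_{m}}^{-1}(A)$ containing $K$, rerun the smoothing estimate with an orthonormal basis of $\Phi_{q}$ whose initial vectors span $i_{q}(\mathrm{span}\{\psi_{1},\dots,\psi_{m}\})$, and use Hahn--Banach to identify $\pi_{\psi_{1},\dots,\psi_{m}}(K)$ with the corresponding finite-dimensional ellipsoid. This is standard and unproblematic, and it is fair to view it as part of the ``standard projective-limit/Prokhorov extension argument'' you cite; but as written, your estimate does not say ``precisely'' that $\mu$ is tight — that extra step is where finite additivity is upgraded to the condition the extension theorem actually needs.
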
 

A \emph{cylindrical random variable}\index{cylindrical random variable} in $\Phi'$ is a linear map $X: \Phi \rightarrow L^{0} \ProbSpace$. If $Z=\mathcal{Z}\left(\phi_{1}, \dots, \phi_{n}; A \right)$ is a cylindrical set, for $\phi_{1}, \dots, \phi_{n} \in \Phi$ and $A \in \mathcal{B}\left(\R^{n}\right)$, let 
\begin{equation*} 
\mu_{X}(Z) \defeq \Prob \left( ( X[\phi_{1}], \dots, X[\phi_{n}]) \in A  \right) = \Prob \circ X^{-1} \circ \pi_{\phi_{1},\dots, \phi_{n}}^{-1}(A). 
\end{equation*}
The map $\mu_{X}$ is called the \emph{cylindrical distribution} of $X$ and it is cylindrical probability measure on $\Phi'$. 

If $X$ is a cylindrical random variable in $\Phi'$, the \emph{characteristic function} of $X$ is defined to be the characteristic function $\widehat{\mu}_{X}: \Phi \rightarrow \C$ of its cylindrical distribution $\mu_{X}$. Therefore, $ \widehat{\mu}_{X}(\phi)= \Exp e^{i X(\phi)} $, $\forall \, \phi \in \Phi$. Also, we say that $X$ is \emph{$n$-integrable} if $ \Exp \left( \abs{X(\phi)}^{n} \right)< \infty$, $\forall \, \phi \in \Phi$. 

Let $X$ be a $\Phi'_{\beta}$-valued random variable, i.e. $X:\Omega \rightarrow \Phi'_{\beta}$ is a $\mathscr{F}/\mathcal{B}(\Phi'_{\beta})$-measurable map. We denote by $\mu_{X}$ the distribution of $X$, i.e. $\mu_{X}(\Gamma)=\Prob \left( X \in  \Gamma \right)$, $\forall \, \Gamma \in \mathcal{B}(\Phi'_{\beta})$, and it is a Borel probability measure on $\Phi'_{\beta}$. For each $\phi \in \Phi$ we denote by $X[\phi]$ the real-valued random variable defined by $X[\phi](\omega) \defeq X(\omega)[\phi]$, for all $\omega \in \Omega$. Then, the mapping $\phi \mapsto X[\phi]$ defines a cylindrical random variable. Therefore, the above concepts of characteristic function and integrability can be analogously defined for $\Phi'_{\beta}$-valued random variables in terms of the cylindrical random variable they determines.   

Very important for our forthcoming arguments is the following class of $\Phi'_{\beta}$-valued random variables introduced by \Ito{} and Nawata in \cite{ItoNawata:1983}. 

\begin{defi} \label{defiRegularRV}
A $\Phi'_{\beta}$-valued random variable $X$ is called \emph{regular} if there exists a weaker countably Hilbertian topology $\theta$ on $\Phi$ such that $\Prob( \omega: X(\omega) \in \Phi'_{\theta})=1$.
\end{defi}

If $X$ is a cylindrical random variable in $\Phi'$, a $\Phi'_{\beta}$-valued random variable $Y$ is a called \emph{version} of $X$ if for every $\phi \in \Phi$, $X(\phi)=Y[\phi]$ $\Prob$-a.e. A sufficient condition for the existence of a regular version is given in the following result due to \Ito{} and Nawata (see \cite{ItoNawata:1983}):  

\begin{theo}[Regularization theorem]\label{regularizationTheorem} Let $X$ be a cylindrical random variable in $\Phi'$ such that $X:\Phi \rightarrow L^{0}\ProbSpace$ is continuous. Then, $X$ has a unique (up to equivalence) $\Phi'_{\beta}$-valued regular version.   
\end{theo}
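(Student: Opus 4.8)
The plan is to manufacture, out of the continuity of $X$ and the nuclearity of $\Phi$, a weaker countably Hilbertian topology $\theta$ on $\Phi$ for which the characteristic function $\widehat{\mu}_{X}$ remains continuous at the origin, and then to feed this into the Minlos theorem (Theorem \ref{minlosTheorem}). First I would exploit the continuity of the linear map $X:\Phi \rightarrow L^{0}\ProbSpace$ at $0$: for each $k\in\N$ the preimage $X^{-1}(\{Z:\Prob(\abs{Z}>1/k)<1/k\})$ is a neighbourhood of $0$ in $\Phi$, hence contains a ball $B_{\rho_{k}}(\delta_{k})$ for some continuous Hilbertian semi-norm $\rho_{k}$ (the Hilbertian balls form a base of $0$-neighbourhoods in a nuclear space). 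Invoking the defining Hilbert--Schmidt property of a nuclear space, I would enlarge each $\rho_{k}$ to a continuous Hilbertian semi-norm $p_{k}\geq\rho_{k}$ so that the sequence $\{p_{k}\}_{k\in\N}$ is increasing and each inclusion $i_{p_{k},p_{k+1}}$ is Hilbert--Schmidt. Let $\theta$ be the countably Hilbertian topology generated by $\{p_{k}\}_{k\in\N}$; by construction $\theta$ is weaker than $\mathcal{T}$, and since $\rho_{k}\leq p_{k}$, a $\theta$-convergence $\phi_{i}\rightarrow 0$ forces $X(\phi_{i})\rightarrow 0$ in probability. In particular $\widehat{\mu}_{X}$ is positive definite, continuous at $0$ for $\theta$, and equals $1$ at $0$.

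Next I would apply Minlos on the completion. Because each $\Phi_{p_{k}}$ is separable and consecutive inclusions are Hilbert--Schmidt, $\widetilde{\Phi_{\theta}}$ (or, if $\theta$ is not Hausdorff, the completion of its Hausdorff quotient) is a Fr\'{e}chet nuclear space, and by Proposition \ref{propWeakerCountablyHilbertianTopology} its dual is $(\widetilde{\Phi_{\theta}})'=\Phi'_{\theta}=\bigcup_{n\in\N}\Phi'_{p_{n}}\subseteq\Phi'$. Since $\widehat{\mu}_{X}$ extends to a continuous positive-definite function on $\widetilde{\Phi_{\theta}}$ with value $1$ at $0$, Theorem \ref{minlosTheorem} produces a Radon probability measure $\mu$ on $(\widetilde{\Phi_{\theta}})'_{\beta}$ whose characteristic function is $\widehat{\mu}_{X}$. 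As the underlying set of this dual is exactly $\bigcup_{n\in\N}\Phi'_{p_{n}}$, every $f$ in the support satisfies $\abs{f[\phi]}\leq p_{n}'(f)\,p_{n}(\phi)$ for some $n$; that is, $\mu$ is automatically carried by the regular functionals.

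The genuinely delicate point is to realise $\mu$ as an honest $\Phi'_{\beta}$-valued random variable $Y$ on the \emph{given} space $\ProbSpace$ that is a version of $X$, rather than merely on the canonical space $((\widetilde{\Phi_{\theta}})',\mathcal{B},\mu)$. To this end I would fix a countable $\Q$-linear $\theta$-dense subset $D\subseteq\Phi$ (possible since $\Phi_{\theta}$ is separable) and choose representatives of $(X(\phi))_{\phi\in D}$. The characteristic function determines the joint law of this family on $\R^{D}$, and it coincides with the law of $(f[\phi])_{\phi\in D}$ under $\mu$; since $\mu$ charges only $\bigcup_{n}\Phi'_{p_{n}}$, the set $\bigcup_{n,m}\{\,\abs{x_{\phi}}\leq m\,p_{n}(\phi)\ \forall\phi\in D\,\}$ — measurable for the product $\sigma$-algebra — has $\mu$-probability $1$, hence $\Prob$-probability $1$. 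Consequently, off a $\Prob$-null set the map $\phi\mapsto X(\phi)(\omega)$ is $\Q$-linear and $p_{n}$-bounded on $D$ for some $n=n(\omega)$, so it extends uniquely to a continuous linear functional $Y(\omega)\in\Phi'_{p_{n}}\subseteq\Phi'_{\theta}$, which makes $Y$ regular. The remaining work — which I regard as the main obstacle — is to verify that $\omega\mapsto Y(\omega)$ is $\mathscr{F}/\mathcal{B}(\Phi'_{\beta})$-measurable and that $Y[\phi]=X(\phi)$ $\Prob$-a.e. for \emph{every} $\phi\in\Phi$, not only $\phi\in D$: for fixed $\phi$ one picks $\phi_{i}\in D$ with $\phi_{i}\rightarrow\phi$ in $\theta$, uses $Y(\omega)[\phi_{i}]\rightarrow Y(\omega)[\phi]$ together with $X(\phi_{i})\rightarrow X(\phi)$ in probability (the $\theta$-continuity of $X$) along a common almost-sure subsequence, and matches the two limits. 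Controlling the $\omega$-dependence of the Hilbert level $n(\omega)$, and choosing the exceptional null sets compatibly across all $\phi$, is the crux.

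Finally, uniqueness up to equivalence is comparatively routine. Given two regular versions $Y,Y'$, I would choose a countably Hilbertian topology finer than the two topologies witnessing their regularity but still weaker than $\mathcal{T}$, and a countable dense set $D$ for it. Since $Y[\phi]=X(\phi)=Y'[\phi]$ $\Prob$-a.e. for each $\phi\in D$, off a single null set $Y$ and $Y'$ agree on $D$ and are both continuous for that topology, whence $Y=Y'$ as elements of $\Phi'$, $\Prob$-a.e.
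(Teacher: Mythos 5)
Your proposal is correct in strategy, but it reaches the theorem by a genuinely different route from the one the paper relies on. The paper does not actually prove Theorem \ref{regularizationTheorem}: it cites It\^{o} and Nawata, and it reproduces their method, in the harder setting of processes, in Section \ref{sectionRegulTheorem} (Lemmas \ref{lemmPnContiCharacFunctSupremum} and \ref{lemmaRegulaTheo}). That method never invokes Minlos. Instead, the inequality $\Exp \left( \abs{1-e^{iX(\phi)}} \right) \leq \epsilon + 2p(\phi)^{2}$ is integrated against Gaussian measures of variance $1/C^{2}$ to produce the tail bound $\Prob \left( \sum_{j} \abs{X(\phi_{j}^{q_{n}})}^{2} > C^{2} \right) \leq \frac{\sqrt{e}}{\sqrt{e}-1}\left( \epsilon_{n} + \frac{2}{C^{2}} \norm{i_{p_{n},q_{n}}}^{2}_{\mathcal{L}_{2}(\Phi_{q_{n}},\Phi_{p_{n}})} \right)$, and the regular version is then assembled \emph{explicitly} on the given probability space as the series $\sum_{j} X(\phi_{j}^{q_{n}}) f_{j}^{q_{n}}$ in $\Phi'_{q_{n}}$, with Borel--Cantelli patching over $n$. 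You instead treat Minlos (Theorem \ref{minlosTheorem}) as a black box on the nuclear (Fr\'{e}chet, after Hausdorff quotient) space $\widetilde{\Phi_{\theta}}$, and then transfer the conclusion that the resulting Radon measure is carried by $\bigcup_{n} \Phi'_{p_{n}}$ back to $\ProbSpace$ via equality of finite-dimensional laws on $\R^{D}$, rebuilding $Y(\omega)$ by $\Q$-linear, $p_{n(\omega)}$-bounded extension from the countable dense set $D$. This transfer is sound: your set is product-measurable, has full measure under both laws, and the extension argument gives a regular $Y$; your limit-matching argument for $Y[\phi]=X(\phi)$ a.e. also works, and the ``compatibility of null sets across all $\phi$'' that you call the crux is in fact a non-issue, since being a version only requires a null set for each fixed $\phi$. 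What your route buys is brevity and a clean conceptual reading (regularization $=$ Minlos $+$ transfer of finite-dimensional laws). What the paper's route buys is an explicit construction that commutes with taking $\sup_{t \in D}$ inside every estimate — which is precisely how Section \ref{sectionRegulTheorem} upgrades the statement to continuous and c\`{a}dl\`{a}g versions of processes in Theorem \ref{theoRegularizationTheoremCadlagContinuousVersion}; the Minlos route has no evident analogue for that.

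Two points you flag but leave open are completable by standard means and should be completed. First, measurability: for each $\psi \in \widetilde{\Phi_{\theta}}$ the map $\omega \mapsto Y(\omega)[\psi]$ is a pointwise limit of measurable maps, so $Y$ is measurable for the cylindrical $\sigma$-algebra of $(\widetilde{\Phi_{\theta}})'_{\beta}$; since $\Phi_{\theta}$ is separable, this cylindrical $\sigma$-algebra coincides with $\mathcal{B}((\widetilde{\Phi_{\theta}})'_{\beta})$ by the Mitoma--Okada--Okazaki lemma already quoted in Section \ref{subSectionCylAndStocProcess}, and composing with the continuous inclusion of $(\widetilde{\Phi_{\theta}})'_{\beta}$ into $\Phi'_{\beta}$ yields $\mathscr{F}/\mathcal{B}(\Phi'_{\beta})$-measurability. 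Second, when $\theta$ is not Hausdorff you must check that $\widehat{\mu}_{X}$ factors through the Hausdorff quotient; this holds because $p_{n}(\phi)=0$ for all $n$ forces, by $\theta$-continuity of $X$, that $X(\phi)=0$ in $L^{0}\ProbSpace$. With these two additions your argument is a complete, valid alternative proof.
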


The following results establish alternative characterizations for regular random variables. 

\begin{theo}\label{theoCharacterizationRegularRV}
Let $X$ be a $\Phi'_{\beta}$-valued random variable. Consider the statements:
\begin{enumerate}
\item $X$ is regular.
\item The map $X: \Phi \rightarrow L^{0} \ProbSpace$, $\phi \mapsto X[\phi]$ is continuous. 
\item The distribution $\mu_{X}$ of $X$ is a Radon probability measure. 
\end{enumerate} 
One has the implications: $(1) \Rightarrow (2)$ and if $\Phi$ is nuclear, then $(2) \Rightarrow (1)$ and $(2) \Rightarrow (3)$. Moreover, if $\Phi$ is barrelled, then $(3) \Rightarrow (1)$.  
\end{theo}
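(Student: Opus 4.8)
The plan is to prove the four implications one at a time, in the order $(1)\Rightarrow(2)$, $(2)\Rightarrow(1)$, $(2)\Rightarrow(3)$, $(3)\Rightarrow(1)$, reusing earlier parts. For $(1)\Rightarrow(2)$ I would unwind regularity: there is a weaker countably Hilbertian topology $\theta$, generated by an increasing sequence $\{p_{n}\}_{n\in\N}$ of continuous Hilbertian semi-norms, with $\Prob(X\in\Phi'_{\theta})=1$, and $\Phi'_{\theta}=\bigcup_{n}\Phi'_{p_{n}}$ by Proposition \ref{propWeakerCountablyHilbertianTopology}. On the event $A_{n,m}=\{X\in\Phi'_{p_{n}},\ p_{n}'(X)\le m\}$ one has the deterministic bound $\abs{X[\phi](\omega)}\le m\,p_{n}(\phi)$ for every $\phi\in\Phi$, and these events exhaust $\Omega$ up to a null set. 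To prove that the linear map $X:\Phi\to L^{0}\ProbSpace$ is continuous it is enough to prove continuity at zero, and this bound does so without any metrizability of $\Phi$: given $\epsilon,\delta>0$, choose $n,m$ with $\Prob(A_{n,m}^{c})<\delta$; then on the $\mathcal{T}$-neighbourhood $\{\phi:p_{n}(\phi)<\epsilon/m\}$ the event $\{\abs{X[\phi]}>\epsilon\}$ lies inside $A_{n,m}^{c}$, so $\Prob(\abs{X[\phi]}>\epsilon)<\delta$.

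For $(2)\Rightarrow(1)$ with $\Phi$ nuclear, the natural instrument is the Regularization Theorem \ref{regularizationTheorem}: applied to the continuous cylindrical random variable $\phi\mapsto X[\phi]$ it yields a regular $\Phi'_{\beta}$-valued version $Y$, so $X[\phi]=Y[\phi]$ a.s. for every $\phi$. Consequently $\mu_{X}$ and $\mu_{Y}$ agree on every cylindrical set, hence, both being Borel probability measures and the cylindrical sets forming an algebra generating $\mathcal{C}(\Phi'_{\beta})$, on the whole cylindrical $\sigma$-algebra. Regularity of $Y$ gives a weaker countably Hilbertian $\theta$ with $\mu_{Y}(\Phi'_{\theta})=1$, and the decisive point is that $\Phi'_{\theta}=\bigcup_{n}\Phi'_{p_{n}}$ belongs to $\mathcal{C}(\Phi'_{\beta})$; I would deduce this from the separability of each $\Phi_{p_{n}}$, which allows $p_{n}'$ to be realised as a countable supremum of the cylindrical functionals $f\mapsto\abs{f[\phi]}$, making $\Phi'_{p_{n}}$ cylindrically measurable. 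Transferring the unit mass then gives $\mu_{X}(\Phi'_{\theta})=1$, i.e. $X$ is regular.

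For $(2)\Rightarrow(3)$ I would first invoke the implication just established to conclude that $X$ is regular, so $\mu_{X}$ is concentrated on $\bigcup_{n}\Phi'_{p_{n}}$, and this is where nuclearity is essential. Since $\{p_{n}\}$ is increasing the events $E_{k}=\{X\in\Phi'_{p_{k}}\}$ increase to a set of full measure, so given $\epsilon>0$ I choose $k$ with $\Prob(E_{k})>1-\tfrac{\epsilon}{2}$ and a radius $m$ with $\Prob(\{p_{k}'(X)\le m\}\cap E_{k})>1-\epsilon$. By nuclearity there is a continuous Hilbertian $q\ge p_{k}$ with $i_{p_{k},q}$ Hilbert--Schmidt, whose dual $\Phi'_{p_{k}}\hookrightarrow\Phi'_{q}$ is compact, so the ball $\{f:p_{k}'(f)\le m\}$ is relatively compact in $\Phi'_{q}$ and, by continuity of $\Phi'_{q}\hookrightarrow\Phi'_{\beta}$, relatively compact in $\Phi'_{\beta}$. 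Its $\beta$-closure $K_{\epsilon}$ is then compact with $\mu_{X}(K_{\epsilon})>1-\epsilon$, so $\mu_{X}$ is tight; since these compacts lie inside the separable Hilbert space $\Phi'_{q}$ they are metrizable, and tightness on metrizable compacts upgrades to inner regularity by compact sets for all Borel sets, which is exactly the Radon property.

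Finally, for $(3)\Rightarrow(1)$ with $\Phi$ barrelled, I would use that a Radon measure is tight, choosing compacts $K_{j}\subseteq\Phi'_{\beta}$ with $\mu_{X}(K_{j})\uparrow1$. Each $K_{j}$ is bounded, hence $\sigma(\Phi',\Phi)$-bounded, and barrelledness is precisely the property that forces a bounded subset of the dual to be equicontinuous; thus there is a continuous Hilbertian semi-norm $q_{j}$ with $K_{j}\subseteq\{f:q_{j}'(f)\le1\}\subseteq\Phi'_{q_{j}}$. An increasing sequence of continuous Hilbertian semi-norms dominating $\{q_{j}\}$ generates a weaker countably Hilbertian topology $\theta$ with $\bigcup_{j}K_{j}\subseteq\Phi'_{\theta}$, so $\{X\in\bigcup_{j}K_{j}\}$ is an $\mathscr{F}$-measurable event of probability one contained in $\{X\in\Phi'_{\theta}\}$, and completeness of $\ProbSpace$ makes the latter have probability one, i.e. $X$ is regular. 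I expect the main obstacle to be the measurability-and-transfer step shared by $(2)\Rightarrow(1)$ and $(2)\Rightarrow(3)$---showing that $\Phi'_{\theta}$ is cylindrically measurable and carries the full mass---together with the passage from tightness to full Radon regularity; by comparison the equicontinuity argument for $(3)\Rightarrow(1)$ is the cleanest part once the characterization of barrelled spaces is available.
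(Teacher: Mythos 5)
Your proposal is correct, and for three of the four implications it follows the paper's own route: in $(1)\Rightarrow(2)$ you replace the paper's appeal to sequential continuity and pseudo-metrizability of $\Phi_{\theta}$ by a direct estimate on the events $A_{n,m}=\{X\in\Phi'_{p_{n}},\,p_{n}'(X)\leq m\}$, which is a minor (if anything cleaner) variation of the same idea; $(2)\Rightarrow(1)$ invokes the regularization theorem exactly as the paper does, except that where the paper says the existence of a regular version ``clearly implies'' that $X$ is regular, you supply the missing transfer step, namely that $\Phi'_{\theta}=\bigcup_{n}\Phi'_{p_{n}}$ is cylindrically measurable (via countable suprema, using separability of each $\Phi_{p_{n}}$) and that $\mu_{X}$ and $\mu_{Y}$ agree on $\mathcal{C}(\Phi'_{\beta})$, so the full mass transfers from $Y$ to $X$; and $(3)\Rightarrow(1)$ is the paper's polar/equicontinuity argument essentially verbatim. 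The genuine divergence is $(2)\Rightarrow(3)$: the paper notes that continuity of $\phi\mapsto X[\phi]$ gives continuity of the characteristic function and then invokes the Minlos theorem, whereas you first apply $(2)\Rightarrow(1)$ and then construct the compact sets by hand --- the balls $\left\{ f : p_{k}'(f)\leq m\right\}$ carry mass at least $1-\epsilon$ by regularity, and nuclearity makes them compact in some $\Phi'_{q}$ (Hilbert--Schmidt embeddings of Hilbert spaces are compact), hence compact in $\Phi'_{\beta}$ --- and finally upgrade tightness to the Radon property. Your route is longer but more self-contained (it reproves the relevant half of Minlos in this setting), and it sidesteps a point the paper glosses over: Minlos produces \emph{some} Radon measure with the given characteristic function, and identifying it with $\mu_{X}$ on all of $\mathcal{B}(\Phi'_{\beta})$ requires precisely the kind of cylindrical-$\sigma$-algebra argument you wrote out in $(2)\Rightarrow(1)$. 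The one step you should spell out is the final upgrade from tightness to Radon, since $\Phi'_{\beta}$ is not metrizable and this is not automatic: given Borel $\Gamma$ and $\epsilon>0$, pick a compact $K$ with $\mu_{X}(K^{c})<\epsilon/2$; since $K$ is metrizable (a continuous bijection from a norm-compact subset of $\Phi'_{q}$ onto its image in the Hausdorff space $\Phi'_{\beta}$ is a homeomorphism), the restriction of $\mu_{X}$ to $K$ is a finite Borel measure on a compact metric space, hence inner regular, and a compact $C\subseteq\Gamma\cap K$ with $\mu_{X}\left( (\Gamma\cap K)\setminus C\right) <\epsilon/2$ gives $\mu_{X}(\Gamma\setminus C)<\epsilon$.
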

\begin{prf}
$(1) \Rightarrow (2)$: Let $\theta$ be a countably Hilbertian topology on $\Phi$ as in Definition \ref{defiRegularRV}. As the topology $\theta$ is weaker than the nuclear topology on $\Phi$, it is sufficient to show that the map $\phi \mapsto X[\phi]$ is continuous from $\Phi_{\theta}$ into $L^{0} \ProbSpace$. To show this, note that if $\{ \phi_{n} \}_{n \in \N}$ is a sequence converging to $\phi$ in $\Phi_{\theta}$, then the fact that $\Prob( \omega: X(\omega) \in \Phi'_{\theta})=1$ shows that $X[\phi_{n}] \rightarrow X[\phi]$ $\Prob$-a.e. Therefore $\abs{X[\phi_{n}] - X[\phi]} \rightarrow 0$ in probability. Hence, because $\Phi_{\theta}$ is first-countable (this is because it is pseudo-metrizable, see Proposition \ref{propWeakerCountablyHilbertianTopology}) it then follows that the map  $\phi \mapsto X[\phi]$ is continuous from $\Phi_{\theta}$ into $L^{0} \ProbSpace$.

$(2) \Rightarrow (1)$: As the map $\phi \mapsto X[\phi]$ is continuous, because $\Phi$ is nuclear the regularization theorem (Theorem \ref{regularizationTheorem}) shows that $X$ has a regular version. But this clearly implies that $X$ is also regular. 

$(2) \Rightarrow (3)$: The assumption that $\phi \mapsto X[\phi]$ is continuous implies that the characteristic function of $X$ is continuous. If $\Phi$ is nuclear the Minlos theorem (Theorem \ref{minlosTheorem}) shows that the distribution $\mu_{X}$ of $X$ is a Radon probability measure.

$(3) \Rightarrow (1)$: Assume that $\Phi$ is barrelled and that $\mu_{X}$ is a Radon measure. Let $\{ \epsilon_{n} \}_{n \in \N}$ be a decreasing sequence of positive real numbers converging to zero. For every $n \in \N$, because $\mu_{X}$ is a Radon probability measure there exists a compact subset $K_{n}$ of $\Phi'_{\beta}$ such that $\mu_{X}(K_{n}) > 1- \epsilon_{n}$. 

Because $\Phi$ is barrelled and each $K_{n}$ is bounded, there exists an increasing sequence of continuous Hilbertian semi-norms $\{ p_{n} \}_{n \in \N}$ on $\Phi$ such that for every $n \in \N$, $K_{n} \subseteq B_{p_{n}}(1)^{0}$, where $B_{p_{n}}(1)^{0}$ is the polar set of $B_{p_{n}}(1)$ (see \cite{Schaefer}, Result 5.2, Chapter IV, p.141). Then, as $B_{p_{n}}(1)^{0}$ is the unit ball of the Hilbert space $\Phi'_{p_{n}}$, it follows that 
$$\Prob ( X \in \Phi'_{p_{n}})=\mu_{X}(\Phi'_{p_{n}}) \geq \mu_{X}( B_{p_{n}}(1)^{0}) \geq \mu_{X}(K_{n})  >1-\epsilon_{n}. $$
Hence, as $\epsilon_{n} \rightarrow 0$ it follows from the above inequality that $\Prob \left( X \in \bigcup_{n \in \N} \Phi'_{p_{n}} \right)=1$. Then, if $\theta$ is the countably Hilbertian topology generated by the semi-norms $\{ p_{n} \}_{n \in \N}$ it follows from \eqref{dualOfWeakerCountablyHilbertianTopology} that $\Prob( \omega: X(\omega) \in \Phi'_{\theta})=1$ and hence $X$ is a regular random variable. 
\end{prf}

The following is an useful property of regular random variables. 

\begin{prop} \label{propCriteriaVersionRegularRV}
Let $X$, $Y$ be $\Phi'_{\beta}$-valued random regular variables. Then, $X=Y$ $\Prob$-a.e. if and only if for all $\phi \in \Phi$, $X[\phi]=Y[\phi]$ $\Prob$-a.e. 
\end{prop}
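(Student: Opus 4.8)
The plan is to dispose of the "only if" direction at once—if $X=Y$ $\Prob$-a.e.\ then for each fixed $\phi$ the real random variables $\omega \mapsto X(\omega)[\phi]$ and $\omega \mapsto Y(\omega)[\phi]$ differ only on that same null set—and to focus all the work on the converse. The genuine difficulty in the converse is that the hypothesis supplies, for each $\phi \in \Phi$ separately, a null set $N_{\phi}$ off which $X[\phi]=Y[\phi]$; since $\Phi$ is typically uncountable, $\bigcup_{\phi} N_{\phi}$ need not be null, so taking a direct union over all test functions is hopeless. The whole point of invoking regularity is to replace this uncountable family of test functions by a countable one.

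First I would produce a single weaker countably Hilbertian topology adapted to both variables. By Definition \ref{defiRegularRV} there are weaker countably Hilbertian topologies $\theta_{X}$ and $\theta_{Y}$, generated by sequences of continuous Hilbertian semi-norms $\{p_{n}\}$ and $\{q_{n}\}$, with $\Prob(X \in \Phi'_{\theta_{X}})=1$ and $\Prob(Y \in \Phi'_{\theta_{Y}})=1$. Combining $\{p_{n}\}$ and $\{q_{n}\}$ into a single increasing sequence of continuous Hilbertian semi-norms (routinely, via square roots of partial sums of squares) generates a countably Hilbertian topology $\theta$ that is still weaker than $\mathcal{T}$ and is finer than each of $\theta_{X}$, $\theta_{Y}$; hence $\Phi'_{\theta_{X}} \cup \Phi'_{\theta_{Y}} \subseteq \Phi'_{\theta}$, and therefore $\Prob(X \in \Phi'_{\theta})=\Prob(Y \in \Phi'_{\theta})=1$.

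Next I would exploit separability. By Proposition \ref{propWeakerCountablyHilbertianTopology}(1) the space $\Phi_{\theta}$ is separable, so I fix a countable $\theta$-dense set $D=\{\phi_{k} : k \in \N\} \subseteq \Phi$. For each $k$ the hypothesis gives a null set $N_{k}$ with $X[\phi_{k}]=Y[\phi_{k}]$ off $N_{k}$, and I set
$$ N = \Big( \bigcup_{k \in \N} N_{k} \Big) \cup \{ X \notin \Phi'_{\theta} \} \cup \{ Y \notin \Phi'_{\theta} \}, $$
which is a countable union of null sets—the last two being null by the previous paragraph together with completeness of $\ProbSpace$—so $\Prob(N)=0$.

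Finally, for $\omega \notin N$ both $X(\omega)$ and $Y(\omega)$ lie in $\Phi'_{\theta}$, which by \eqref{dualOfWeakerCountablyHilbertianTopology} is exactly the continuous dual of $\Phi_{\theta}$; thus $X(\omega)$ and $Y(\omega)$ are $\theta$-continuous linear functionals coinciding on the $\theta$-dense set $D$. For an arbitrary $\phi \in \Phi$, choosing $\phi_{k_{j}} \to \phi$ in $\Phi_{\theta}$ and passing to the limit gives $X(\omega)[\phi]=Y(\omega)[\phi]$, so $X(\omega)=Y(\omega)$ in $\Phi'$ and hence in $\Phi'_{\beta}$. Therefore $X=Y$ on $\Omega \setminus N$, which proves the claim. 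The one genuinely delicate point is the first step: one must merge the two regularizing topologies into a common weaker countably Hilbertian topology, so that a single separable space—and thus a single countable dense set—serves both variables at once; once that is in place, the rest is the standard argument that continuous functionals agreeing on a dense set agree everywhere.
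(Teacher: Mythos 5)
Your proposal is correct and follows essentially the same route as the paper: merge the two regularizing countably Hilbertian topologies into a single weaker countably Hilbertian topology $\theta$, use the separability of $\Phi_{\theta}$ (Proposition \ref{propWeakerCountablyHilbertianTopology}) to reduce the uncountable family of null sets to a countable one, and then extend the equality $X(\omega)[\phi]=Y(\omega)[\phi]$ from the countable $\theta$-dense set to all of $\Phi$ by $\theta$-continuity of the functionals $X(\omega),Y(\omega)\in\Phi'_{\theta}$. The only difference is cosmetic: the paper performs the merging of the two topologies implicitly (choosing one sequence of semi-norms working for both $X$ and $Y$), whereas you construct it explicitly.
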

\begin{prf}
We only have to prove the sufficiency. Assume $X$ and $Y$ are regular and let $\{ p_{n} \}_{n \in \N}$ be a sequence of Hilbertian semi-norms on $\Phi$ such that Definition \ref{defiRegularRV} is satisfied for both $X$ and $Y$. If $\theta$ is the countably Hilbertian topology on $\Phi$ determined by the semi-norms $\{ p_{n} \}_{n \in \N}$, then $\Prob ( \Omega_{\theta})=1$, where $\Omega_{\theta}=\{ \omega \in \Omega: X(\omega) \in \Phi'_{\theta}, Y(\omega) \in \Phi'_{\theta} \}$.

Because $\Phi_{\theta}$ is separable, there exists a countable subset $\{ \phi_{j}: j \in \N \}$ of $\Phi$ that is dense in $\Phi_{\theta}$. For every $j \in \N$, it follows from our hypothesis that $\Prob ( \Omega_{j})=1$, where $\Omega_{j} = \{ \omega \in \Omega: X(\omega)[\phi_{j}]= Y(\omega)[\phi_{j}] \}$. Let $\Gamma= \Omega_{\theta} \cap \bigcap_{j \in \N} \Omega_{j}$. Then, we have $\Prob ( \Gamma )=1$.     

Fix $\omega \in \Gamma$ and let $\phi \in \Phi$. Then, there exists a sequence $\{ \phi_{j_{k}} \}_{k \in \N} \subseteq   \{ \phi_{j}: j \in \N \}$ that converges to $\phi$ in $\Phi_{\theta}$. Therefore, as $X(\omega), Y(\omega) \in \Phi'_{\theta}$ and because $X(\omega)[\phi_{j_{k}}]= Y(\omega)[\phi_{j_{k}}]$ for all $k \in \N$, it follows that 
$$ X(\omega)[\phi] = \lim_{k \rightarrow \infty}  X(\omega)[\phi_{j_{k}}]= \lim_{k \rightarrow \infty}  Y(\omega)[\phi_{j_{k}}] = Y(\omega)[\phi].$$   
As the above is true for any $\phi \in \Phi$, it follows that $X(\omega)=Y(\omega)$ for every $\omega \in \Gamma$. Therefore, $X=Y$ $\Prob$-a.e.  
\end{prf}

Let $J=[0,\infty)$ or $J=[0,T]$ for some $T>0$. We say that $X=\{ X_{t} \}_{t \in J}$ is a \emph{cylindrical process} in $\Phi'$ if $X_{t}$ is a cylindrical random variable, for each $t \in J$. Clearly, any $\Phi'_{\beta}$-valued stochastic processes $X=\{ X_{t} \}_{t \in J}$ defines a cylindrical process under the prescription: $X[\phi]=\{ X_{t}[\phi] \}_{t \in J}$, for each $\phi \in \Phi$. We will say that it is the \emph{cylindrical process determined} by $X$.

A $\Phi'_{\beta}$-valued processes $Y=\{Y_{t}\}_{t \in J}$ is said to be a $\Phi'_{\beta}$-valued \emph{version} of the cylindrical process $X=\{X_{t}\}_{t \in J}$ on $\Phi'$ if for each $t \in J$, $Y_{t}$ is a $\Phi'_{\beta}$-valued version of $X_{t}$. 

A $\Phi'_{\beta}$-valued stochastic process $X$ is \emph{continuous} (respectively \emph{c\`{a}dl\`{a}g} if for $\Prob$-a.e. $\omega \in \Omega$, the \emph{sample paths} $t \mapsto X_{t}(w) \in \Phi'_{\beta}$ of $X$ are continuous (respectively right-continuous with left limits).

Let $X=\{ X_{t} \}_{t \in J}$ be a $\Phi'_{\beta}$-valued stochastic processes. We say that $X$ is \emph{regular} if for every $t \in J$, $X_{t}$ is a regular random variable. Some useful properties of $\Phi'_{\beta}$- valued regular processes are given below. 

\begin{prop}\label{propCondiIndistingProcess} Let $X=\left\{ X_{t} \right\}_{t \in J}$ and $Y=\left\{ Y_{t} \right\}_{t \in J}$ be $\Phi'_{\beta}$- valued regular stochastic processes such that for each $\phi \in \Phi$, $X[\phi]=\left\{ X_{t}[\phi] \right\}_{t \in J}$ is a version of $Y=\left\{ Y_{t}[\phi] \right\}_{t \in J}$. Then $X$ is a version of $Y$. Furthermore, if $X$ and $Y$ are right-continuous then they are indistinguishable processes. 
\end{prop}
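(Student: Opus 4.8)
The plan is to reduce the first assertion to Proposition \ref{propCriteriaVersionRegularRV} applied time-by-time, and then to upgrade ``version'' to ``indistinguishable'' by the classical argument combining separability of the time parameter with path regularity.

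First I would fix $t \in J$ and unpack the hypothesis: for every $\phi \in \Phi$ the real-valued process $X[\phi]$ is a version of $Y[\phi]$, which by definition means $X_{t}[\phi]=Y_{t}[\phi]$ $\Prob$-a.e. Since $X_{t}$ and $Y_{t}$ are regular random variables, Proposition \ref{propCriteriaVersionRegularRV} then immediately yields $X_{t}=Y_{t}$ $\Prob$-a.e. as $\Phi'_{\beta}$-valued random variables. As $t \in J$ was arbitrary, each $X_{t}$ is a $\Phi'_{\beta}$-valued version of $Y_{t}$, which is precisely the statement that $X$ is a $\Phi'_{\beta}$-valued version of $Y$. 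This settles the first claim with essentially no work beyond quoting the earlier proposition.

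For the second claim I would assume in addition that $X$ and $Y$ have right-continuous sample paths and pass to a single exceptional null set. Choose a countable dense subset $D \subseteq J$, taking care to include the right endpoint $T$ in $D$ when $J=[0,T]$. For each $t \in D$ the first part supplies a $\Prob$-null set $N_{t}$ off which $X_{t}=Y_{t}$; set $N=\bigcup_{t \in D} N_{t}$, which is still $\Prob$-null. Let $\Omega_{0}$ be the almost sure event on which the paths of both $X$ and $Y$ are right-continuous, and put $\Gamma=\Omega_{0} \setminus N$, so that $\Prob(\Gamma)=1$.

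The crux is then to propagate coincidence from the countable set $D$ to all of $J$ for each fixed $\omega \in \Gamma$. Given $t \in J$, I would select a sequence $\{t_{k}\} \subseteq D$ with $t_{k} \downarrow t$; this is where having the right endpoint in $D$ matters, since there one simply takes $t_{k}=T$. Right-continuity of the paths gives $X_{t_{k}}(\omega) \to X_{t}(\omega)$ and $Y_{t_{k}}(\omega) \to Y_{t}(\omega)$ in $\Phi'_{\beta}$, while $X_{t_{k}}(\omega)=Y_{t_{k}}(\omega)$ for every $k$ because $t_{k} \in D$ and $\omega \notin N$. The one point requiring care, and the main (though mild) obstacle, is uniqueness of the limit: since $\Phi'_{\beta}$ is Hausdorff (if $f \neq 0$ there is $\phi$ with $\eta_{\{\phi\}}(f)=\abs{f[\phi]}>0$), the two limits must agree, whence $X_{t}(\omega)=Y_{t}(\omega)$. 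As $t \in J$ and $\omega \in \Gamma$ were arbitrary, the paths of $X$ and $Y$ coincide on the full-probability event $\Gamma$, which is exactly indistinguishability. Thus the difficulty is organisational rather than conceptual: arranging a single uniform null set, handling the right endpoint, and invoking the Hausdorff property of $\Phi'_{\beta}$ to pin down the common limiting value.
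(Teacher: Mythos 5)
Your proof is correct and follows essentially the same route as the paper: Proposition \ref{propCriteriaVersionRegularRV} applied pointwise in $t$ for the version statement, then right-continuity plus a countable dense set of times to get indistinguishability. The only difference is that you explicitly spell out what the paper calls ``a standard argument'' (the right-endpoint issue and uniqueness of limits in the Hausdorff space $\Phi'_{\beta}$), which is a harmless refinement, not a different method.
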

\begin{prf}
Fix $t \in J$. Then, as for each $\phi \in \Phi$, $X_{t}[\phi]=Y_{t}[\phi]$ $\Prob$-a.e., then Proposition \ref{propCriteriaVersionRegularRV} shows that $X_{t}=Y_{t}$ $\Prob$-a.e. Therefore, $X$ is a version of $Y$. Now, assume that both $X$ and $Y$ are right-continuous. Let $\Omega_{X}$ and $\Omega_{Y}$ denote respectively the sets of all $\omega \in \Omega$ such that the maps $t \mapsto X_{t}(\omega)$ and $t \mapsto Y_{t}(\omega)$ are right-continuous. Let $\Gamma_{X,Y}=\{ \omega \in \Omega: X_{t}(\omega)=Y_{t}(\omega), \forall t \in \Q_{+} \}$, where $\Q_{+}=\Q \cap J$. Then, $\Prob ( \Omega_{X} \cap \Omega_{Y} \cap \Gamma_{X,Y} )=1$ and by the right-continuity of $X$ and $Y$ and the denseness of $\Q_{+}$ in $J$, it follows by a standard argument that $X_{t}(\omega)=Y_{t}(\omega)$ for all $t \in J$, for each $\omega \in \Omega_{X} \cap \Omega_{Y} \cap \Gamma_{X,Y}$. Thus, $X$ and $Y$ are indistinguishable. 
\end{prf}

\section{The Regularization Theorem for Cylindrical Stochastic Processes}\label{sectionRegulTheorem}

\begin{assu}
From now on, $\Phi$ will always denote a nuclear space over $\R$. 
\end{assu}

The main result of this paper is the following theorem that establish conditions for the existence of a $\Phi'_{\beta}$-valued, regular, continuous (or c\`{a}dl\`{a}g) version for a cylindrical process in $\Phi'$. 

\begin{theo}[Regularization Theorem]\label{theoRegularizationTheoremCadlagContinuousVersion}
Let $X=\{X_{t} \}_{t \geq 0}$ be a cylindrical process in $\Phi'$ satisfying:
\begin{enumerate}
\item For each $\phi \in \Phi$, the real-valued process $X(\phi)=\{ X_{t}(\phi) \}_{t \geq 0}$ has a continuous (respectively c\`{a}dl\`{a}g) version.
\item For every $T > 0$, the family $\{ X_{t}: t \in [0,T] \}$ of linear maps from $\Phi$ into $L^{0} \ProbSpace$ is equicontinuous.  
\end{enumerate}
Then, there exists a countably Hilbertian topology $\vartheta_{X}$ on $\Phi$ 
and a $(\widetilde{\Phi_{\vartheta_{X}}})'_{\beta}$-valued continuous (respectively c\`{a}dl\`{a}g) process $Y= \{ Y_{t} \}_{t \geq 0}$, such that for every $\phi \in \Phi$, $Y[\phi]= \{ Y_{t}[\phi] \}_{t \geq 0}$ is a version of $X(\phi)= \{ X_{t}(\phi) \}_{t \geq 0}$. Moreover, $Y$ is a $\Phi'_{\beta}$-valued, regular, continuous (respectively c\`{a}dl\`{a}g) version of $X$ that is unique up to indistinguishable versions. 
\end{theo}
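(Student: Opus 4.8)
The plan is to decouple the problem into a \emph{spatial} regularization, which produces a single weaker countably Hilbertian topology $\vartheta_{X}$ in which the whole family $\{X_{t}\}$ can be realized, and a \emph{temporal} regularization, which transfers the path regularity of the scalar processes $X(\phi)$ to a genuine dual-valued process; the two are glued through an orthonormal expansion in a conveniently chosen dual Hilbert space. I would treat the continuous and \cadlag cases in parallel, the only difference being the path space used. To build $\vartheta_{X}$, fix $m \in \N$; by assumption (2) the family $\{X_{t}:t\in[0,m]\}$ is equicontinuous from $\Phi$ into $L^{0}\ProbSpace$, and since $\Phi$ is nuclear every zero neighbourhood contains a ball of a continuous Hilbertian semi-norm, so equicontinuity yields a single continuous Hilbertian semi-norm $\rho_{m}$ controlling the whole family uniformly in $t\in[0,m]$ in the sense of convergence in probability. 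Using the defining property of nuclearity I would choose, for each $m$, a continuous Hilbertian semi-norm $q_{m}\geq \rho_{m}$ with $i_{\rho_{m},q_{m}}\in\mathcal{L}_{2}(\Phi_{q_{m}},\Phi_{\rho_{m}})$, and arrange all these into one increasing sequence $\{p_{n}\}_{n\in\N}$ of continuous Hilbertian semi-norms with Hilbert--Schmidt consecutive inclusions. Let $\vartheta_{X}$ be the countably Hilbertian topology they generate; it is weaker than the nuclear topology, and by Proposition \ref{propWeakerCountablyHilbertianTopology} the space $\Phi_{\vartheta_{X}}$ is separable with separable completion and $(\widetilde{\Phi_{\vartheta_{X}}})'=\bigcup_{n}\Phi'_{p_{n}}$ by \eqref{dualOfWeakerCountablyHilbertianTopology}.

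Next, fix a horizon $m$ and take from this sequence a pair $\rho\leq q$ with $\{X_{t}:t\leq m\}$ continuous with respect to $\rho$ and $i_{\rho,q}\in\mathcal{L}_{2}(\Phi_{q},\Phi_{\rho})$. For fixed $t$, continuity of $X_{t}$ in probability makes its characteristic function continuous with respect to $\rho$, so the regularization theorem (Theorem \ref{regularizationTheorem}), together with the Hilbert--Schmidt inclusion $i_{\rho,q}$ and the Minlos theorem (Theorem \ref{minlosTheorem}), realizes $X_{t}$ as a regular random variable taking values in the Hilbert space $\Phi'_{q}$. Concretely, I would fix an orthonormal basis $\{\phi_{j}\}\subseteq\Phi$ of $\Phi_{q}$ and write $\tilde{X}(\phi_{j})$ for the continuous (resp. \cadlag) scalar version furnished by (1); then the functional $\phi\mapsto\sum_{j}\tilde{X}_{t}(\phi_{j})\inner{i_{q}\phi}{\phi_{j}}$ reproduces $X_{t}$ on $\Phi$ and has squared $\Phi'_{q}$-norm $\sum_{j}\abs{\tilde{X}_{t}(\phi_{j})}^{2}$, finite almost surely for each fixed $t$, the relevant summability being governed by $\sum_{j}\rho(\phi_{j})^{2}=\norm{i_{\rho,q}}_{\mathcal{L}_{2}(\Phi_{q},\Phi_{\rho})}^{2}<\infty$; crucially, the equicontinuity in (2) makes the corresponding tail bound on $\sum_{j>N}\abs{\tilde{X}_{t}(\phi_{j})}^{2}$ uniform in $t\leq m$.

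The heart of the proof, and the step I expect to be hardest, is the temporal gluing. I would define the candidate process by the orthonormal series $Y_{t}=\sum_{j}\tilde{X}_{t}(\phi_{j})\,\phi_{j}^{\ast}$ with values in $\Phi'_{q}$, where $\phi_{j}^{\ast}\in\Phi'_{q}$ is the dual basis, and show that its partial sums form a Cauchy sequence in the complete space of $\Phi'_{q}$-valued continuous (resp. \cadlag) processes under the topology of uniform convergence on $[0,m]$ in probability. Each partial sum is a finite combination of the continuous (resp. \cadlag) scalar versions, hence already has the desired path regularity, so that a uniform-in-$t$ control of the tail $\sup_{t\leq m}\sum_{j>N}\abs{\tilde{X}_{t}(\phi_{j})}^{2}$ in probability would hand us a limit $Y$ inheriting continuity (resp. the \cadlag property) in $\Phi'_{q}$. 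The difficulty is exactly the passage from the fixed-time almost-sure convergence of these tails to convergence that is uniform in $t$: here I would reduce $\sup_{t\in[0,m]}$ to $\sup_{t\in\Q\cap[0,m]}$ using right-continuity of the paths, and then leverage the uniform (in $t$) tightness of the laws of $X_{t}$ coming from the equicontinuity of (2), sharpened by one further Hilbert--Schmidt step so that the tail norms are uniformly small in probability across all $t\leq m$.

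Finally I would identify $Y$ as the required version and conclude. By construction $Y_{t}[\phi_{j}]=\tilde{X}_{t}(\phi_{j})$, and since $\{\phi_{j}\}$ is total in $\Phi_{q}$ and both sides are continuous in $\phi$, Proposition \ref{propCriteriaVersionRegularRV} gives $Y_{t}[\phi]=X_{t}(\phi)$ $\Prob$-a.e.\ for every $\phi\in\Phi$, so that $Y[\phi]$ is a version of $X(\phi)$ and $Y$ is in particular a $\Phi'_{\beta}$-valued version of the cylindrical process. As $Y$ takes values in $\Phi'_{q}\subseteq(\widetilde{\Phi_{\vartheta_{X}}})'=\Phi'_{\vartheta_{X}}$, which embeds continuously into $\Phi'_{\beta}$, the process is $(\widetilde{\Phi_{\vartheta_{X}}})'_{\beta}$-valued, $\Phi'_{\beta}$-valued and regular, with the asserted path regularity. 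Running the construction on each horizon $[0,m]$ and patching the pieces (which agree on overlaps by uniqueness) produces $Y$ on $[0,\infty)$, and uniqueness up to indistinguishability follows at once from Proposition \ref{propCondiIndistingProcess} applied to any two such regular right-continuous versions.
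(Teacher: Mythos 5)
Your overall architecture (orthonormal expansion of $X_t$ in a Hilbert space $\Phi'_{q}$ obtained from nuclearity, followed by patching over horizons) matches the paper's, but there is a genuine gap at exactly the step you yourself flag as the hardest, and the mechanism you propose there does not close it. You need to control $\sup_{t \leq m}\sum_{j>N}\abs{\tilde{X}_{t}(\phi_{j})}^{2}$ in probability, and you propose to obtain this from ``the uniform (in $t$) tightness of the laws of $X_{t}$ coming from the equicontinuity of (2)'', after reducing to rational times. But equicontinuity of $\{X_{t}: t \leq m\}$, and any tightness statement derived from it, is a statement about the fixed-time marginals: it gives bounds of the form $\Prob\left( \abs{X_{t}(\phi)} > \epsilon \right) \leq \epsilon$ for all $\phi \in B_{p}(1)$, uniformly in $t$, i.e.\ the supremum over $t$ sits \emph{outside} the probability. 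What your Cauchy-in-path-space argument needs is the supremum \emph{inside} the probability, $\Prob\left( \sup_{t \in \Q \cap [0,m]} \cdots > \epsilon \right)$, and no union bound over the countably many rational times can convert the former into the latter (the bound simply sums to infinity). The same circularity appears if one tries to route through a uniform bound on $\sup_{t}\rho'$-norms of realizations of $X_{t}$: that is again a sup-inside-probability statement of precisely the kind being proved.

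The missing ingredient is the paper's Lemma \ref{lemmContiXAsMapToSpaceContiProcesses}: after replacing each $X(\phi)$ by its continuous (respectively c\`{a}dl\`{a}g) version $\widehat{X}(\phi)$, one proves, via the closed graph theorem on the Baire space $\widetilde{\Phi_{\theta}}$ (where $\theta$ is a weaker countably Hilbertian topology making the family equicontinuous, Lemma \ref{lemmContiXtOnCHT}), that $\phi \mapsto \widehat{X}(\phi)$ is continuous from $\Phi$ into $C_{T}(\R)$ equipped with the topology of \emph{uniform} convergence in probability. This is the step that genuinely upgrades the pointwise-in-time hypothesis (2), using the pathwise hypothesis (1), into a path-space statement: it yields semi-norms $p$ with $\Prob\left( \sup_{t \in D} \abs{\widehat{X}_{t}(\phi)} > \delta \right) \leq \epsilon$ for $\phi \in B_{p}(1)$, hence the Mitoma-type estimate $\Exp \left( \sup_{t \in D} \abs{1-e^{i \widehat{X}_{t}(\phi)}} \right) \leq \epsilon + 2p(\phi)^{2}$ of Lemma \ref{lemmPnContiCharacFunctSupremum}, and only then does the Gaussian-integration argument of Lemma \ref{lemmaRegulaTheo} deliver $\Prob \left( \sup_{t \in D} \sum_{j} \abs{\widehat{X}_{t}(\phi_{j}^{q_{n}})}^{2} < \infty \right) \geq 1 - C\epsilon_{n}$, which is the uniform-in-time tail control your series construction requires. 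Without this closed-graph/Baire-category step (or an equivalent substitute) your proposal cannot be completed; with it, the remainder of your outline (orthonormal series, path regularity of the limit, identification via Proposition \ref{propCriteriaVersionRegularRV}, patching via Proposition \ref{propCondiIndistingProcess}) goes through essentially as in the paper, modulo the routine but necessary bookkeeping of null sets ensuring almost-sure linearity of $\widehat{X}_{t}$ across a countable dense set and the kernel elements, which the paper handles with the sets $A_{n}$ and $\Gamma_{n}$ in Lemma \ref{lemmaRegulaTheo}.
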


We proceed to prove Theorem \ref{theoRegularizationTheoremCadlagContinuousVersion}. Let $X=\{X_{t} \}_{t \in [0,T]}$ be a cylindrical process in $\Phi'$ satisfying conditions \emph{(1)} and \emph{(2)} of Theorem \ref{theoRegularizationTheoremCadlagContinuousVersion}.
Without loss of generality we assume that each $X(\phi)=\{ X_{t}(\phi) \}_{t \geq 0}$ has a continuous version. The c\`{a}dl\`{a}g version case follows from the same arguments.  

The next proposition constitutes the main step in the proof of Theorem \ref{theoRegularizationTheoremCadlagContinuousVersion}. 

\begin{prop}\label{propRegulTheoremCadlagContinuousVersionInBoundedInterval}
For every $T>0$ there exists a countably Hilbertian topology $\vartheta_{T}$ on $\Phi$ 
and a $(\widetilde{\Phi_{\vartheta_{T}}})'_{\beta}$-valued continuous (respectively c\`{a}dl\`{a}g) process $Y^{(T)}= \{ Y^{(T)}_{t} \}_{t \in [0,T]}$, such that for every $\phi \in \Phi$, $\{ Y^{(T)}_{t}[\phi] \}_{t \in [0,T]}$ is a version of $ \{ X_{t}(\phi) \}_{t \in [0,T]}$.  
\end{prop}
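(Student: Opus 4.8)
The plan is to lift the whole trajectory of $X$ to a single continuous linear map into a space of random continuous functions, regularize it at a countable set of times, and then use the compactness of a Hilbert--Schmidt embedding to obtain path continuity. Throughout write $C_{T}=\mathcal{C}([0,T];\R)$ for the separable Banach space of continuous functions with the supremum norm, and $L^{0}(\Omega;C_{T})$ for the complete metrizable topological vector space of $C_{T}$-valued random variables under convergence in probability. First I would exploit hypothesis \emph{(2)}: since the topology of the nuclear space $\Phi$ has a base of balls of continuous Hilbertian semi-norms, equicontinuity of $\{X_{t}:t\in[0,T]\}$ into $L^{0}\ProbSpace$ produces, for each $m$, a continuous Hilbertian semi-norm dominating every $X_{t}$ up to $1/m$. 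Dominating these by an increasing sequence and inserting, via nuclearity, further semi-norms with Hilbert--Schmidt canonical inclusions, I obtain an increasing family $\{p_{n}\}_{n\in\N}$ containing three semi-norms $\rho\leq p\leq q$ such that $i_{\rho,p}:\Phi_{p}\to\Phi_{\rho}$ and $i_{p,q}:\Phi_{q}\to\Phi_{p}$ are Hilbert--Schmidt and every $X_{t}$ is continuous with respect to $\rho$. I take $\vartheta_{T}$ to be the countably Hilbertian topology generated by $\{p_{n}\}$. Using hypothesis \emph{(1)} I choose for each $\phi$ a continuous version of $X(\phi)$ and let $\mathcal{X}(\phi)\in L^{0}(\Omega;C_{T})$ be its path; since two continuous processes agreeing a.s.\ at each time are indistinguishable, $\mathcal{X}:\Phi\to L^{0}(\Omega;C_{T})$ is well defined and linear.

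The first and main obstacle is to pass from the marginal (fixed-$t$) information in \emph{(1)}--\emph{(2)} to uniform control in $t$, i.e.\ to show that $\phi\mapsto\sup_{t\in[0,T]}\abs{\widetilde{X}_{t}(\phi)}$ is continuous in probability with respect to $\vartheta_{T}$. I would obtain this from the closed graph theorem. By Proposition \ref{propWeakerCountablyHilbertianTopology} the completion $\widetilde{\Phi_{\vartheta_{T}}}$ is a complete, metrizable, Baire space, and $L^{0}(\Omega;C_{T})$ is an $F$-space, so it is enough to verify that the closure of the graph of $\mathcal{X}$ in $\widetilde{\Phi_{\vartheta_{T}}}\times L^{0}(\Omega;C_{T})$ is single valued. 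If $\phi_{j}\to 0$ in $\vartheta_{T}$ with $\mathcal{X}(\phi_{j})\to W$, then along a subsequence $\mathcal{X}(\phi_{j})\to W$ uniformly a.s., so $\widetilde{X}_{t}(\phi_{j})\to W_{t}$ a.s.\ for each $t$; but $\rho$-continuity of $X_{t}$ forces $X_{t}(\phi_{j})\to 0$ in probability, whence $W_{t}=0$ a.s.\ for all $t$ and $W=0$ by continuity of paths. Thus $\mathcal{X}$ extends to a continuous linear map on $\widetilde{\Phi_{\vartheta_{T}}}$, and in particular $\sup_{t}\abs{\widetilde{X}_{t}(\phi)}$ is controlled in probability by $\rho(\phi)$.

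Next I would regularize at a countable dense set of times. For each $t$ the map $X_{t}:\Phi\to L^{0}\ProbSpace$ is continuous, so by the regularization theorem (Theorem \ref{regularizationTheorem}) and the Minlos-type estimate associated with the Hilbert--Schmidt inclusion $i_{\rho,p}$ there is a $\Phi'_{p}$-valued regular random variable $Y_{t}$ with $Y_{t}[\phi]=\widetilde{X}_{t}(\phi)$ a.s. Applying the same Hilbert--Schmidt estimate to the control of $\sup_{t}\abs{\widetilde{X}_{t}(\cdot)}$ by $\rho$ obtained in the previous step, together with the countability of $\Q_{T}\defeq\Q\cap[0,T]$, yields $\sup_{s\in\Q_{T}}p'(Y_{s})<\infty$ $\Prob$-a.s. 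Since for each $\phi$ the map $s\mapsto Y_{s}[\phi]=\widetilde{X}_{s}(\phi)$ is continuous, for every $t\in[0,T]$ the vectors $Y_{s}$, as $s\to t$ with $s\in\Q_{T}$, are bounded in the Hilbert space $\Phi'_{p}$ and converge coordinatewise, hence converge weakly in $\Phi'_{p}$ to a limit which I call $Y_{t}$; the resulting map $t\mapsto Y_{t}$ is weakly continuous and bounded in $\Phi'_{p}$ and still satisfies $Y_{t}[\phi]=\widetilde{X}_{t}(\phi)$.

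Finally I would upgrade weak continuity to norm continuity using the compactness of the Hilbert--Schmidt embedding. The canonical inclusion $i'_{p,q}:\Phi'_{p}\to\Phi'_{q}$ is Hilbert--Schmidt, hence compact, and compact operators map bounded weakly convergent sequences to norm convergent ones; therefore, viewing $Y_{t}$ in $\Phi'_{q}$ through $i'_{p,q}$, the path $t\mapsto Y_{t}$ is continuous in the norm of $\Phi'_{q}$. Setting $Y^{(T)}=\{Y_{t}\}_{t\in[0,T]}$ and recalling from Proposition \ref{propWeakerCountablyHilbertianTopology} that $\Phi'_{q}$ embeds continuously in $(\widetilde{\Phi_{\vartheta_{T}}})'_{\beta}$, I obtain a $(\widetilde{\Phi_{\vartheta_{T}}})'_{\beta}$-valued continuous process with $Y^{(T)}_{t}[\phi]=\widetilde{X}_{t}(\phi)$ a version of $X_{t}(\phi)$ for every $\phi$, as required. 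The c\`{a}dl\`{a}g case is entirely analogous, replacing $C_{T}$ by the Skorokhod space $D([0,T];\R)$ and using that a compact operator also turns a bounded, weakly right-continuous path with left limits into a norm right-continuous path with left limits.
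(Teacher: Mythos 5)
Your proposal has a genuine gap at its foundation: the claim in your first paragraph that hypothesis \emph{(2)} yields a \emph{single} continuous Hilbertian semi-norm $\rho$ such that every $X_{t}$, $t \in [0,T]$, is $\rho$-continuous. Equicontinuity of $\{X_{t}: t\in[0,T]\}$ into $L^{0}\ProbSpace$ only gives, for each level $\epsilon_{m}$, a continuous Hilbertian semi-norm $p_{m}$ with $\Prob\left(\abs{X_{t}(\phi)}>\epsilon_{m}\right)\leq \epsilon_{m}$ for all $\phi \in B_{p_{m}}(1)$ and all $t$; the semi-norm depends on the level $\epsilon_{m}$, and since $L^{0}\ProbSpace$ is metrizable but not normable, these estimates cannot in general be collapsed onto one semi-norm. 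What one actually obtains is equicontinuity with respect to the countably Hilbertian topology generated by the whole sequence $\{p_{m}\}$ (this is the paper's Lemma \ref{lemmContiXtOnCHT}), not $\rho$-continuity for a single $\rho$. Your claim is precisely the strictly stronger hypothesis \emph{(2)} of Theorem \ref{theoRegularTheoHilbertSpaceCadlagContinuousVersion}, and accordingly your construction would deliver a version living in a single Hilbert space $\Phi'_{q}$ --- a conclusion stronger than that of Proposition \ref{propRegulTheoremCadlagContinuousVersionInBoundedInterval}, and one that is not available under equicontinuity alone. This is exactly why the paper must build a whole sequence of processes $Y^{(n)}$ with values in the increasing family $\Phi'_{q_{n}}$, each defined only on a set $\Lambda_{n}$ of probability $\geq 1-2\frac{\sqrt{e}}{\sqrt{e}-1}\epsilon_{n}$, and glue them by Borel--Cantelli: for a fixed $\omega$ the Hilbert space containing the path $t \mapsto Y_{t}(\omega)$ has an index $N(\omega)$ depending on $\omega$.

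A second, related gap is the step where you assert that the fixed-time regularizations, ``together with the countability of $\Q\cap[0,T]$,'' yield $\sup_{s\in\Q\cap[0,T]}p'(Y_{s})<\infty$ $\Prob$-a.s. Fixed-time regularization gives $p'(Y_{s})<\infty$ a.s.\ for each $s$ separately, but a supremum of countably many a.s.-finite random variables need not be finite: countability lets you discard a null set, it does not produce a uniform bound. What is required is an estimate with the supremum \emph{inside} the probability, of the form $\Prob\left( \sup_{t\in D}\sum_{j}\abs{\widehat{X}_{t}(\phi_{j}^{q})}^{2}>C^{2}\right)\leq \cdots$, and this is precisely what the paper's Lemmas \ref{lemmPnContiCharacFunctSupremum} and \ref{lemmaRegulaTheo} obtain via the Gaussian-integration (Minlos-type) argument applied to the sup-characteristic functional $\Exp \sup_{t\in D}\abs{1-e^{i\widehat{X}_{t}(\phi)}}$. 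Your closed-graph control of the path map $\phi\mapsto\{\widehat{X}_{t}(\phi)\}_{t}$ is sound (it matches Lemma \ref{lemmContiXAsMapToSpaceContiProcesses}, modulo the point that the closed graph theorem must be applied to a map defined on all of the Baire space $\widetilde{\Phi_{\vartheta_{T}}}$, so each $X_{t}$ must first be extended to the completion), and your final step --- using compactness of the Hilbert--Schmidt inclusion $i'_{p,q}$ to upgrade a bounded, weakly continuous path to a norm-continuous one --- is legitimate and is essentially the paper's Parseval/dominated-convergence argument in disguise. But the two middle steps, the single-$\rho$ reduction and the asserted uniform bound over $\Q\cap[0,T]$, are where the real work of the proposition lies, and as written they do not go through.
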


For the benefit of the reader we split the proof of Proposition \ref{propRegulTheoremCadlagContinuousVersionInBoundedInterval} in several steps. Fix $T>0$. For each $\phi \in \Phi$, let $\widehat{X}(\phi)=\{ \widehat{X}_{t}(\phi) \}_{t \in [0,T]}$ be a continuous version of $X(\phi)=\{ X_{t}(\phi) \}_{t \in [0,T]}$. It is clear from the corresponding properties of $X$ that $\widehat{X}$ determines a cylindrical process $\widehat{X}=\{ \widehat{X}_{t}\}_{t \in [0,T] }$ in $\Phi'$. Moreover, we have the following result.   

\begin{lemm}\label{lemmContiXtOnCHT} There exists a weaker countably Hilbertian topology $\theta$ on $\Phi$ such that the family of linear maps $\{ \widehat{X}_{t}: t \in [0,T] \}$ from $\Phi$ into $L^{0} \ProbSpace$ is $\theta$-equicontinuous.
\end{lemm}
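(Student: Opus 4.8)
The plan is first to collapse the passage from $X$ to its continuous version $\widehat{X}$ into a triviality at the level of $L^{0}\ProbSpace$, and then to build the topology $\theta$ by a single application of the first-countability of $L^{0}\ProbSpace$ together with hypothesis \emph{(2)}. For the first point, observe that for each fixed $t \in [0,T]$ and each $\phi \in \Phi$ the fact that $\widehat{X}(\phi)$ is a version of $X(\phi)$ means precisely that $\widehat{X}_{t}(\phi) = X_{t}(\phi)$ $\Prob$-a.e.; that is, $\widehat{X}_{t}(\phi)$ and $X_{t}(\phi)$ are \emph{the same} element of $L^{0}\ProbSpace$. Consequently the linear maps $\widehat{X}_{t}$ and $X_{t}$ from $\Phi$ into $L^{0}\ProbSpace$ coincide for every $t$, and it suffices to exhibit a weaker countably Hilbertian topology $\theta$ on $\Phi$ for which the family $\{ X_{t} : t \in [0,T] \}$ is $\theta$-equicontinuous.

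To construct $\theta$, I would exploit that $L^{0}\ProbSpace$, carrying the topology of convergence in probability, is metrizable and hence first-countable: fix a decreasing countable basis $\{ V_{n} \}_{n \in \N}$ of neighborhoods of $0$ in $L^{0}\ProbSpace$. By hypothesis \emph{(2)} the family $\{ X_{t} : t \in [0,T] \}$ is equicontinuous for the nuclear topology $\mathcal{T}$, so for each $n$ there is a $\mathcal{T}$-neighborhood of $0$ that is mapped into $V_{n}$ by every $X_{t}$. Since $\Phi$ is nuclear, hence multi-Hilbertian, such a neighborhood contains a ball $B_{q_{n}}(1)$ of a continuous Hilbertian semi-norm $q_{n}$ (a finite intersection of Hilbertian balls again contains one such ball, via $(\sum_{i} (p_{i}/r_{i})^{2})^{1/2}$ after rescaling), so that $X_{t}(B_{q_{n}}(1)) \subseteq V_{n}$ for all $t \in [0,T]$. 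Setting $p_{n} = (\sum_{k=1}^{n} q_{k}^{2})^{1/2}$ yields an increasing sequence of continuous Hilbertian semi-norms, and I let $\theta$ be the countably Hilbertian topology they generate. Because $\Phi$ is nuclear each $\Phi_{p_{n}}$ is separable, so $\theta$ is a bona fide countably Hilbertian topology, and it is weaker than $\mathcal{T}$ since every $p_{n}$ is $\mathcal{T}$-continuous.

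Finally, to verify $\theta$-equicontinuity, take any neighborhood $V$ of $0$ in $L^{0}\ProbSpace$ and choose $n$ with $V_{n} \subseteq V$. Since $q_{n} \leq p_{n}$ we have $B_{p_{n}}(1) \subseteq B_{q_{n}}(1)$, whence $X_{t}(B_{p_{n}}(1)) \subseteq V_{n} \subseteq V$ for every $t \in [0,T]$; as $B_{p_{n}}(1)$ is a $\theta$-neighborhood of $0$, this is exactly $\theta$-equicontinuity at $0$, and linearity of the $X_{t}$ propagates it to all of $\Phi$. Recalling $\widehat{X}_{t} = X_{t}$ in $L^{0}\ProbSpace$, the same holds for $\{ \widehat{X}_{t} : t \in [0,T] \}$, which is the assertion. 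The one genuinely substantive point, and the step I expect to matter most, is the reduction from the a priori $\mathcal{T}$-equicontinuity (formulated over the whole, possibly uncountable, defining family of Hilbertian semi-norms) to a \emph{countable} generating family: this is precisely what the first-countability of $L^{0}\ProbSpace$ buys, and it is what makes the resulting $\theta$ countably Hilbertian. Everything else — the identification $\widehat{X}_{t} = X_{t}$ and the separability of each $\Phi_{p_{n}}$ — is bookkeeping afforded by the hypotheses.
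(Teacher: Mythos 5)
Your proof is correct and follows essentially the same route as the paper: identify $\widehat{X}_{t}=X_{t}$ as maps into $L^{0}\ProbSpace$ via the version property, use a countable neighborhood basis of $0$ in $L^{0}\ProbSpace$ (the paper realizes this concretely through sets of the form $\{Z : \Prob(\abs{Z}>\epsilon_{n})\leq\epsilon_{n}\}$ with $\epsilon_{n}\downarrow 0$) together with hypothesis \emph{(2)} to extract a countable family of continuous Hilbertian semi-norms, and let $\theta$ be the countably Hilbertian topology they generate. Your extra bookkeeping (making the semi-norms increasing, noting separability of each $\Phi_{p_{n}}$) is sound and only makes explicit what the paper leaves implicit.
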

\begin{prf}
Let $\{ \epsilon_{n} \}_{n \in \N}$ be a decreasing sequence of positive numbers converging to zero. From the equicontinuity of the family of linear maps $\{ X_{t}: t \in [0,T] \}$ and from the fact that $\widehat{X}(\phi)$ is a version of $X(\phi)$ for each $\phi \in \Phi$, it follows that for every $n \in \N$ there exists a continuous Hilbertian semi-norm $p_{n}$ on $\Phi$ such that 
\begin{equation} \label{contiXtOnCHT}
\Prob \left( \omega: \abs{\widehat{X}_{t}(\phi)(\omega)} > \epsilon_{n} \right) \leq \epsilon_{n}, \quad \forall \, \phi \in B_{p_{n}}(1), \, \forall \, t \in [0,T].  
\end{equation}
%
Then, if $\theta$ is the countably Hilbertian topology on $\Phi$ generated by the semi-norms $\{ p_{n} \}_{n \in \N}$, it follows from \eqref{contiXtOnCHT} and the fact that $\lim_{n \rightarrow \infty} \epsilon_{n}=0$ that the  family $\{ \widehat{X}_{t}: t \in [0,T] \}$ is $\theta$-equicontinuous.
\end{prf}
 
For the next result we need the following terminology. Let $C_{T}(\R)$ and $D_{T}(\R)$ denote respectively the space of continuous and c\`{a}dl\`{a}g real-valued processes defined in $[0,T]$, both are considered equipped with the topology of uniform convergence in probability on $[0,T]$.

\begin{lemm} \label{lemmContiXAsMapToSpaceContiProcesses}
The linear map $\widehat{X}$ from $\Phi$ into $C_{T}(\R)$ given by $\phi \mapsto \widehat{X}(\phi)=\{ \widehat{X}_{t}(\phi)\}_{t \in [0,T]}$ is continuous.
\end{lemm}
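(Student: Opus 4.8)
The plan is to deduce continuity of $\widehat{X}$ from the \emph{closed graph theorem}, the whole point being to upgrade the pointwise--in--time control furnished by Lemma \ref{lemmContiXtOnCHT} to control that is uniform in $t$, as demanded by the topology of $C_{T}(\R)$. First I would make two reductions. Since $\widehat{X}$ is linear it suffices to prove continuity at $0$, and since the countably Hilbertian topology $\theta$ produced by Lemma \ref{lemmContiXtOnCHT} is weaker than the nuclear topology of $\Phi$, it is enough to show that $\widehat{X}$ is continuous as a map out of $\Phi_{\theta}$. I would then record the two facts that make the closed graph theorem applicable: $C_{T}(\R)$ (and likewise $D_{T}(\R)$) endowed with uniform convergence in probability on $[0,T]$ is a complete, metrizable topological vector space; and, by Proposition \ref{propWeakerCountablyHilbertianTopology}, the completion $\widetilde{\Phi_{\theta}}$ is a complete, pseudo-metrizable Baire space, to which $\widehat{X}$ is transported using the $\theta$-equicontinuity of the family $\{\widehat{X}_{t}\}$ together with the density of $\Phi$.

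The core of the argument is then to check that $\widehat{X}$ has a closed graph. Suppose $\phi_{k} \to \phi$ in $\Phi_{\theta}$ and $\widehat{X}(\phi_{k}) \to U$ in $C_{T}(\R)$; I must show $U = \widehat{X}(\phi)$. Fix $t \in [0,T]$. On one hand, convergence in $C_{T}(\R)$ forces $\widehat{X}_{t}(\phi_{k}) \to U_{t}$ in probability. On the other hand, Lemma \ref{lemmContiXtOnCHT} makes each evaluation $\widehat{X}_{t}: \Phi_{\theta} \to L^{0}\ProbSpace$ continuous, so $\widehat{X}_{t}(\phi_{k}) \to \widehat{X}_{t}(\phi)$ in probability; hence $U_{t} = \widehat{X}_{t}(\phi)$ $\Prob$-a.e., for every $t$. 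Since both $U$ and $\widehat{X}(\phi)$ have $\Prob$-a.s.\ continuous paths and agree a.s.\ at every rational time, path-continuity forces them to be indistinguishable, i.e.\ $U = \widehat{X}(\phi)$ in $C_{T}(\R)$. The closed graph theorem then delivers continuity of $\widehat{X}$ from $\Phi_{\theta}$, hence from $\Phi$, into $C_{T}(\R)$. The c\`{a}dl\`{a}g case is identical, with $D_{T}(\R)$ replacing $C_{T}(\R)$ and right-continuity together with the rationals used in the last gluing step.

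I expect the genuine obstacle to be exactly this upgrade from marginal to uniform control. Lemma \ref{lemmContiXtOnCHT} only bounds each single random variable $\widehat{X}_{t}(\phi)$, uniformly over $t$, whereas continuity into $C_{T}(\R)$ requires $\sup_{t}\abs{\widehat{X}_{t}(\phi)}$ to be small in probability; these are not equivalent, as a continuous ``bump at a uniformly random time'' shows ($\sup_{t}\Prob(\,\cdot\,>\epsilon)$ small while $\Prob(\sup_{t}\,\cdot\,>\epsilon)$ large). No soft manipulation of the equicontinuity can bridge this gap directly; what bridges it is the completeness of both spaces, which is why the closed graph theorem, rather than a hands-on estimate, is the right instrument, and why the reduction to the Baire space $\widetilde{\Phi_{\theta}}$ (and the attendant extension of $\widehat{X}$ to it) is the delicate step that must be handled with care.
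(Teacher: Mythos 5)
Your proposal is correct and takes essentially the same route as the paper: Lemma \ref{lemmContiXtOnCHT} gives continuity of each $\widehat{X}_{t}$ on $\Phi_{\theta}$, the map is extended to the completion $\widetilde{\Phi_{\theta}}$, closedness of the graph is verified through uniqueness of limits in $L^{0}\ProbSpace$ (evaluating at each fixed $t$), and the closed graph theorem on the Baire space $\widetilde{\Phi_{\theta}}$ combined with the weakness of $\theta$ yields continuity on $\Phi$. The one point to tighten is that the closed-graph verification must formally be carried out for the extended map $\widetilde{X}$ on $\widetilde{\Phi_{\theta}}$ rather than on $\Phi_{\theta}$ itself (which need not be Baire) --- precisely the step you correctly flag as the delicate one.
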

\begin{prf}
Let $\theta$ be a countably Hilbertian topology on $\Phi$ as in Lemma \ref{lemmContiXtOnCHT}. Then, for every $t \in [0,T]$, 
the map $\widehat{X}_{t}:\Phi_{\theta} \rightarrow L^{0} \ProbSpace$ is linear and continuous. Therefore, for all $t \in [0,T]$ there exists a continuous and linear map $\widetilde{X}_{t}:\widetilde{\Phi_{\theta}} \rightarrow L^{0} \ProbSpace$ satisfying $\widehat{X}_{t}= \widetilde{X}_{t} \circ J$, where $J$ is the canonical embedding from $\Phi_{\theta}$ into its completion $\widetilde{\Phi_{\theta}}$ (see \cite{Jarchow}, Theorem 2, Section 3.4, p.61-2).

We are going to show that the linear map $\widetilde{X}$ from $\widetilde{\Phi_{\theta}}$ into $C_{T}(\R)$ given by $\phi \mapsto \widetilde{X}(\phi)=\{ \widetilde{X}_{t}(\phi)\}_{t \in [0,T]}$ is closed. Let $\{ \phi_{n} \}_{n \in \N}$ be a sequence converging to $\phi$ in $\widetilde{\Phi_{\theta}}$ and assume that there exists  $Y \in C_{T}(\R)$ such that $\sup_{t \in [0,T]} \abs{\widetilde{X}_{t}(\phi_{n})-Y_{t}}$ converges in probability to $0$ as $n \rightarrow \infty$. We have to prove that $\widetilde{X}(\phi)=Y$ in $C_{T}(\R)$. 

First, for every $t \in [0,T]$  the continuity of the map $\widetilde{X}_{t}:\widetilde{\Phi_{\theta}} \rightarrow L^{0} \ProbSpace$ and the fact that $\{ \phi_{n} \}_{n \in \N}$ converges to $\phi$ in $\widetilde{\Phi_{\theta}}$ implies that the sequence of random variables $\{ \widetilde{X}_{t}(\phi_{n}) \}_{n \in \N}$ converges in probability to $\widetilde{X}_{t}(\phi)$. 

On the other hand, as $\sup_{t \in [0,T]} \abs{\widetilde{X}_{t}(\phi_{n})-Y}$ converges in probability to $0$ as $n \rightarrow \infty$, then for every $t \in [0,T]$ the sequence of random variables $\{ \widetilde{X}_{t}(\phi_{n}) \}_{n \in \N}$ converges in probability to $Y_{t}$. Therefore, by uniqueness of limits in $L^{0}\ProbSpace$ it follows that $\widetilde{X}_{t}(\phi)=Y_{t}$ $\Prob$-a.e. for every $t \in [0,T]$, and hence $\widetilde{X}(\phi)=Y$ in $C_{T}(\R)$. 

Then, the linear map $\widetilde{X}:\widetilde{\Phi_{\theta}} \rightarrow C_{T}(\R)$ is sequentially closed and therefore closed because $\widetilde{\Phi_{\theta}}$ is pseudo-metrizable (Proposition \ref{propWeakerCountablyHilbertianTopology}) and hence first-countable. Now, because $\widetilde{\Phi_{\theta}}$ is a Baire space (Proposition \ref{propWeakerCountablyHilbertianTopology}) and $C_{T}(\R)$ is a complete, metrizable, topological vector space, the closed graph theorem (see \cite{NariciBeckenstein}, Theorem 14.3.4, p.465) shows that $\widetilde{X}$ is continuous. 

Now, because for every $t \in [0,T]$, $\widehat{X}_{t}= \widetilde{X}_{t} \circ J$ then we have that the map $\widehat{X}$ from 
$\Phi_{\theta}$ into $C_{T}(\R)$ given by $\phi \mapsto \widehat{X}(\phi)=\{ \widehat{X}_{t}(\phi)\}_{t \in [0,T]}$ satisfies 
$\widehat{X}=\widetilde{X} \circ J$. Therefore, as both $\widetilde{X}$ and $J$ are continuous, it follows that $\widehat{X}$ is 
also continuous. Moreover, as the topology $\theta$ is weaker than the nuclear topology on $\Phi$, it follows that $\widehat{X}$ is continuous as a map from $\Phi$ into $C_{T}(\R)$.  
\end{prf}

\begin{rema}
In the case that for each $\phi \in \Phi$, $X(\phi)=\{ X_{t}(\phi) \}_{t \in [0,T]}$ has a c\`{a}dl\`{a}g version $\widehat{X}(\phi)=\{ \widehat{X}_{t}(\phi) \}_{t \in [0,T]}$, then in Lemma \ref{lemmContiXAsMapToSpaceContiProcesses} we have that the linear mapping $\widehat{X}$ from $\Phi$ into $D_{T}(\R)$ given by $\phi \mapsto \widehat{X}(\phi)$ is continuous.
\end{rema}

\begin{lemm}\label{lemmPnContiCharacFunctSupremum} Let $D$ be a countable dense subset of $[0,T]$. For every $\epsilon >0$ there exists a continuous Hilbertian semi-norm $p$ on $\Phi$ such that 
\begin{equation} \label{pnContinuityCharactFuncSupremum}
\Exp \left( \sup_{t \in D} \abs{1-e^{i \widehat{X}_{t}(\phi)}} \right) \leq \epsilon + 2p(\phi)^{2}, \quad \forall \, \phi \in \Phi. 
\end{equation}
\end{lemm}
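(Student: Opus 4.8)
The plan is to deduce the estimate from the continuity of the linear map $\widehat{X} \colon \Phi \to C_{T}(\R)$ proved in Lemma \ref{lemmContiXAsMapToSpaceContiProcesses}, combined with two elementary pointwise inequalities and a case split that manufactures the quadratic term $2p(\phi)^{2}$ out of the trivial bound $\abs{1-e^{ix}}\le 2$. The key observation is that the $\epsilon$-term will absorb the small-$\phi$ regime (via continuity) while the boundedness of $\abs{1-e^{ix}}$ handles the large-$\phi$ regime and supplies the square essentially for free.

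First I would introduce the measurable function $S(\phi)\defeq\sup_{t\in D}\abs{\widehat{X}_{t}(\phi)}$, which is well defined because $D$ is countable. Since $\widehat{X}(\phi)=\{\widehat{X}_{t}(\phi)\}_{t\in[0,T]}$ is a continuous process and $D$ is dense in $[0,T]$, the supremum over $D$ coincides $\Prob$-a.e. with the supremum over $[0,T]$; consequently $\Exp(\min(1,S(\phi)))$ is exactly the distance from $\widehat{X}(\phi)$ to $0$ in a metric generating the topology of uniform convergence in probability on $C_{T}(\R)$, for instance $d(Y)=\Exp(\min(1,\sup_{t\in[0,T]}\abs{Y_{t}}))$.

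Next I would record the two pointwise inequalities $\abs{1-e^{ix}}\le\min(\abs{x},2)$ for all $x\in\R$ (since $\abs{1-e^{ix}}=2\abs{\sin(x/2)}$) and $\min(a,2)\le 2\min(a,1)$ for all $a\ge 0$. As $x\mapsto\min(x,2)$ is nondecreasing and commutes with suprema, taking the supremum over $t\in D$ gives $\sup_{t\in D}\abs{1-e^{i\widehat{X}_{t}(\phi)}}\le\min(S(\phi),2)$, whence
$$\Exp\Big(\sup_{t\in D}\abs{1-e^{i\widehat{X}_{t}(\phi)}}\Big)\le\Exp(\min(S(\phi),2))\le 2\,\Exp(\min(S(\phi),1)).$$
Now fix $\epsilon>0$. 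By Lemma \ref{lemmContiXAsMapToSpaceContiProcesses} the map $\widehat{X}\colon\Phi\to C_{T}(\R)$ is continuous, so $\phi\mapsto\Exp(\min(1,S(\phi)))=d(\widehat{X}(\phi))$ is continuous at $0$ and vanishes there; as $\Phi$ is nuclear its topology is generated by Hilbertian semi-norms, so (after rescaling) there is a continuous Hilbertian semi-norm $p$ on $\Phi$ with $p(\phi)\le 1\Rightarrow\Exp(\min(1,S(\phi)))\le\epsilon/2$. I claim this $p$ works: if $p(\phi)\le 1$ the displayed chain gives a bound of $2\cdot(\epsilon/2)=\epsilon\le\epsilon+2p(\phi)^{2}$, while if $p(\phi)>1$ then $2p(\phi)^{2}>2$ and the left-hand side never exceeds $2$, so \eqref{pnContinuityCharactFuncSupremum} holds in either case.

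The elementary inequalities and the identification $\sup_{D}=\sup_{[0,T]}$ are routine. The one step that genuinely relies on the earlier development is the continuity input from Lemma \ref{lemmContiXAsMapToSpaceContiProcesses}, and the main thing to be careful about is reading the $C_{T}(\R)$-topology correctly so that this continuity really translates into the quantitative semi-norm control of $\Exp(\min(1,S(\phi)))$; once that is in place, the case split on whether $p(\phi)\lessgtr 1$ produces the quadratic term without further work.
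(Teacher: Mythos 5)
Your proposal is correct and follows essentially the same route as the paper's proof: both hinge on the continuity of $\widehat{X}:\Phi \rightarrow C_{T}(\R)$ from Lemma \ref{lemmContiXAsMapToSpaceContiProcesses} to produce a continuous Hilbertian semi-norm $p$ controlling the small-$\phi$ regime, and both handle $p(\phi)>1$ by the trivial bound $\abs{1-e^{ix}}\leq 2 \leq 2p(\phi)^{2}$. The only difference is cosmetic bookkeeping: you convert continuity in probability into the expectation bound via the metric $\Exp\left(\min\left(1,\cdot\right)\right)$ and the inequality $\abs{1-e^{ix}}\leq 2\min\left(\abs{x},1\right)$, whereas the paper picks $\delta$ from the continuity of the exponential and splits the expectation over the event $\left\{ \sup_{t \in D}\abs{\widehat{X}_{t}(\phi)} > \delta \right\}$.
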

\begin{prf}
We proceed in a way similar to Lemma 1 of Mitoma \cite{Mitoma:1983}. Let $\epsilon >0$. From the continuity of the exponential function there exists $\delta >0$ such that $\abs{1-e^{ir}}\leq \frac{\epsilon}{2}$ if $\abs{r} \leq \delta$. Now, from the continuity of the map $\phi \mapsto \widehat{X}(\phi)$ from $\Phi$ into $C_{T}(\R)$ (Lemma \ref{lemmContiXAsMapToSpaceContiProcesses}), there exists a continuous Hilbertian semi-norm $p$ on $\Phi$ such that 
\begin{equation} \label{continuityOrigenCHTRegularVersionOperator}
\Prob \left( \omega: \sup_{t \in D} \abs{\widehat{X}_{t}(\phi)(\omega)} > \delta \right) \leq \frac{\epsilon}{4}, \quad \forall \, \phi \in B_{p}(1).  
\end{equation}
Let $\Upsilon=\{ \omega \in \Omega: \sup_{t \in D} \abs{\widehat{X}_{t}(\phi)(\omega)} < \delta\}$. Then, if $\phi \in B_{p}(1)$ it follows from \eqref{continuityOrigenCHTRegularVersionOperator} that
$$ \Exp \left( \sup_{t \in D} \abs{1-e^{i\widehat{X}_{t}(\phi)}} \right) 
\leq \int_{\Upsilon} \sup_{t \in D} \abs{1-e^{i\widehat{X}_{t}(\phi)(\omega)}} \Prob(d \omega) + 2 \, \Prob (\Upsilon^{c}) \leq \epsilon. 
$$
On the other hand, because $\sup_{t \in D} \abs{1-e^{i\widehat{X}_{t}(\phi)}}\leq 2$ for any $\phi \in \Phi$, then if $\phi \in B_{p}(1)^{c}$, we have 
$$ \Exp \left( \sup_{t \in D} \abs{1-e^{i \widehat{X}_{t}(\phi)}} \right) \leq  2p(\phi)^{2}.$$
Therefore, from the above inequalities we obtain \eqref{pnContinuityCharactFuncSupremum}. 
\end{prf}

\begin{lemm}\label{lemmaRegulaTheo} There exists an increasing sequence of continuous Hilbertian semi-norms $\{q_{n}\}_{n \in \N}$ on $\Phi$, a subset $\Omega_{Y}$ of $\Omega$ such that $\Prob \left( \Omega_{Y} \right) =1$ and a sequence of stochastic processes $Y^{(n)}=\{ Y_{t}^{(n)} \}_{t \in [0,T]}$, $n \in \N$, satisfying: 
\begin{enumerate}
\item For each $n \in \N$, $Y^{(n)}$ is a $\Phi'_{q_{n}}$-valued process such that for every $\phi \in \Phi_{q_{n}}$, $Y^{(n)}[\phi]=\{ Y^{(n)}_{t}[\phi] \}_{t \in [0,T]}$ is a continuous real-valued process. 
\item For each $\omega \in \Omega_{Y}$, there exists $N(\omega)$ such that
\begin{enumerate}
\item For all $n \geq N(\omega)$, $\sup_{t \in [0,T]} q_{n}'(Y^{(n)}_{t}(\omega))<\infty$, and 
\item For all $m \geq n \geq N(\omega)$, $Y^{(m)}_{t}(\omega)=i'_{q_{n},q_{m}} Y^{(n)}_{t}(\omega)$ for all $t \in [0,T]$.  
\end{enumerate}
\item For every $\phi \in \Phi$, there exists $\Delta_{\phi} \subseteq \Omega$ with $\Prob(\Delta_{\phi})=1$ such that for every $\omega \in \Omega_{Y} \cap \Delta_{\phi}$ there exists $N(\omega)$ such that for each $n \geq N(\omega)$, $Y^{(n)}_{t}(\omega)[i_{q_{n}} \phi]=\widehat{X}_{t}(\phi)(\omega)$ for all $t \in [0,T]$.    
\end{enumerate}  
\end{lemm}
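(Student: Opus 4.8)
The plan is to realise each $Y^{(n)}$ as the orthonormal expansion in $\Phi'_{q_n}$ of the scalar coordinates $t\mapsto\widehat{X}_t(\phi^n_j)$, and to control the convergence of this expansion, uniformly in $t$ and simultaneously for all large $n$, by combining the nuclear structure of $\Phi$ with a Borel--Cantelli argument. First I would fix a countable dense $D\subseteq[0,T]$ and a sequence $\epsilon_n\downarrow0$, say $\epsilon_n=2^{-n}$. The continuity of $\widehat{X}$ into $C_T(\R)$ (Lemma \ref{lemmContiXAsMapToSpaceContiProcesses}), equivalently the estimate of Lemma \ref{lemmPnContiCharacFunctSupremum}, provides for each $n$ a continuous Hilbertian semi-norm with $\Prob(\sup_{t\in D}\abs{\widehat{X}_t(\phi)}>2^{-n})\leq 2^{-n}$ whenever $p_n(\phi)\leq1$, hence by homogeneity $\Prob(\sup_{t\in D}\abs{\widehat{X}_t(\phi)}>2^{-n}p_n(\phi))\leq 2^{-n}$ for every $\phi$. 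Applying the nuclearity condition twice at each stage and enlarging, I would build an increasing chain $p_1\leq q_1\leq p_2\leq q_2\leq\cdots$ of continuous Hilbertian semi-norms so that each $i_{p_n,q_n}\colon\Phi_{q_n}\to\Phi_{p_n}$ factors through a further Hilbert--Schmidt map and is therefore trace class; consequently, choosing (via density of $i_{q_n}(\Phi)$) a complete orthonormal system $\{\phi^n_j\}_j\subseteq\Phi$ for $\Phi_{q_n}$, one has both $\sum_j p_n(\phi^n_j)<\infty$ and $\sum_j p_n(\phi^n_j)^2<\infty$.

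Next I would set $Y^{(n)}_t\defeq\sum_j\widehat{X}_t(\phi^n_j)\,f^n_j$, where $\{f^n_j\}_j\subseteq\Phi'_{q_n}$ is the dual orthonormal system, so that $q_n'(Y^{(n)}_t)^2=\sum_j\widehat{X}_t(\phi^n_j)^2$. Consider the events $A_{j,n}=\{\sup_{t\in D}\abs{\widehat{X}_t(\phi^n_j)}>p_n(\phi^n_j)^{1/2}\}$. Writing $\psi_{j,n}=\phi^n_j/p_n(\phi^n_j)^{1/2}$, which is small in $p_n$ and hence in every $p_m$ with $m\leq n$ by the chain, I would bound $\Prob(A_{j,n})$ by applying the level-$m$ tail estimate with $m=m(j,n)$ taken as large as the chain permits; arranging the Hilbert--Schmidt norms of the embeddings to decay sufficiently fast makes $m(j,n)\to\infty$ as $j\to\infty$ and yields $\sum_n\sum_j\Prob(A_{j,n})<\infty$. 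Borel--Cantelli then produces a full-measure set $\Omega_Y$ on which only finitely many $A_{j,n}$ occur; hence there is $N(\omega)$ such that for all $n\geq N(\omega)$ and all but finitely many $j$ one has $\sup_{t\in D}\widehat{X}_t(\phi^n_j)^2\leq p_n(\phi^n_j)$, whence $\sup_{t\in D}\sum_j\widehat{X}_t(\phi^n_j)^2\leq\sum_j p_n(\phi^n_j)+(\text{finite})<\infty$. This is claim \emph{(2)(a)} on $D$; since the series converges uniformly in $t$ on the tail while each partial sum is a continuous $\Phi'_{q_n}$-valued path (a finite combination of the continuous scalar paths $t\mapsto\widehat{X}_t(\phi^n_j)$), the path $t\mapsto Y^{(n)}_t$ is continuous, and extending from $D$ to $[0,T]$ by continuity gives claim \emph{(1)} and upgrades the supremum bound to all of $[0,T]$.

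For the reproduction property, fixing $\phi\in\Phi$ and using linearity together with the expansion $i_{q_n}\phi=\sum_j\langle i_{q_n}\phi,\phi^n_j\rangle_{\Phi_{q_n}}\phi^n_j$ and the $q_n$-continuity of $\widehat{X}_t$ (with defect vanishing as $n\to\infty$, from Lemma \ref{lemmContiXtOnCHT}), I would obtain $Y^{(n)}_t(\omega)[i_{q_n}\phi]=\widehat{X}_t(\phi)(\omega)$ on a full-measure set $\Delta_\phi$ and for $n\geq N(\omega)$, which is claim \emph{(3)}. Claim \emph{(2)(b)} then follows by pairing: for $m\geq n\geq N(\omega)$ both $Y^{(m)}_t$ and $i'_{q_n,q_m}Y^{(n)}_t$ lie in $\Phi'_{q_m}$, and by claim \emph{(3)} together with $i_{q_n,q_m}\circ i_{q_m}=i_{q_n}$ they agree on the dense subspace $i_{q_m}(\Phi)$ of $\Phi_{q_m}$, hence coincide.

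The main obstacle is the second step. At any fixed level $n$ the tail estimate carries an additive defect $\epsilon_n$ for \emph{each} basis vector, so a naive summation over the infinite orthonormal system diverges and the series $\sum_j\widehat{X}_t(\phi^n_j)^2$ need not converge at that level --- this is precisely why claims \emph{(2)(a)}--\emph{(b)} are only asserted for $n\geq N(\omega)$. Overcoming it requires the diagonal selection $m(j,n)\to\infty$, the trace-class (doubly Hilbert--Schmidt) chain of embeddings to make the threshold $p_n(\phi^n_j)^{1/2}$ have summable square, and a quantitative control of the decay of the Hilbert--Schmidt norms ensuring $\sum_n\sum_j\Prob(A_{j,n})<\infty$. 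Carrying the supremum over $t\in D$ faithfully through these estimates, and then recovering path continuity on the full interval, is the delicate technical core of the argument.
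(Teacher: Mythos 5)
Your overall architecture (the orthonormal expansion $Y^{(n)}_{t}=\sum_{j}\widehat{X}_{t}(\phi_{j}^{q_{n}})f_{j}^{q_{n}}$, a full-measure set obtained by Borel--Cantelli, and density arguments for the reproduction property \emph{(3)} and the consistency \emph{(2)(b)}) matches the paper's, but the core estimating step --- the one you yourself call ``the delicate technical core'' --- does not work as you propose, and the device the paper uses to make it work is absent from your proposal. You plan to bound, separately for each basis vector, $\Prob(A_{j,n})$ with $A_{j,n}=\{\sup_{t\in D}\abs{\widehat{X}_{t}(\phi_{j}^{q_{n}})}>p_{n}(\phi_{j}^{q_{n}})^{1/2}\}$, and to get summability over $j$ by applying the tail estimate at a level $m=m(j,n)\to\infty$. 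But applying the level-$m$ estimate to $\phi_{j}^{q_{n}}$ requires $2^{-m}p_{m}(\phi_{j}^{q_{n}})\leq p_{n}(\phi_{j}^{q_{n}})^{1/2}$. For $m\leq n$ this inequality does hold (since $p_{m}\leq p_{n}$ and $p_{n}(\phi_{j}^{q_{n}})\leq 1$), but it yields only $\Prob(A_{j,n})\leq 2^{-m}\geq 2^{-n}$, a bound independent of $j$, so $\sum_{j}\Prob(A_{j,n})=\infty$. For $m>n$ your chain gives $p_{m}\geq q_{n}$, hence $p_{m}(\phi_{j}^{q_{n}})\geq q_{n}(\phi_{j}^{q_{n}})=1$ while $p_{n}(\phi_{j}^{q_{n}})^{1/2}\to 0$, so the needed inequality forces $2^{m}\geq p_{m}(\phi_{j}^{q_{n}})/p_{n}(\phi_{j}^{q_{n}})^{1/2}$ --- and nothing controls $p_{m}(\phi_{j}^{q_{n}})$ from above: the semi-norms $p_{m}$ are handed to you by the equicontinuity of the process (Lemma \ref{lemmPnContiCharacFunctSupremum}) and may grow arbitrarily fast on any fixed vector; enlarging them only makes this worse, and the Hilbert--Schmidt norms of the embeddings control sums of \emph{weaker} semi-norms over bases orthonormal in \emph{stronger} ones, never the reverse. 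So no arrangement of the chain rescues a term-by-term union bound: the additive defect $\epsilon_{n}$ is incurred once per application of the estimate, and you need infinitely many applications per level. (A secondary flaw: for a trace-class embedding, $\sum_{j}p_{n}(\phi_{j}^{q_{n}})<\infty$ is \emph{not} true for an arbitrary orthonormal basis $\{\phi_{j}^{q_{n}}\}\subseteq\Phi$ of $\Phi_{q_{n}}$; it holds only for bases adapted to the singular vectors of the embedding, which need not lie in $\Phi$ nor be compatible with your other requirements.)

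The paper circumvents this with a Gaussian-integration (characteristic functional) argument in the style of It\^{o}--Nawata and Minlos, which is the missing idea. One applies the estimate \eqref{pnContinuityCharactFuncSupremum} not to each $\phi_{j}^{q_{n}}$ individually but to finite linear combinations $\sum_{j\leq m}z_{j}\phi_{j}^{q_{n}}$, integrates in $z$ against the product Gaussian $\prod_{j}N_{C}(dz_{j})$, and uses that the Gaussian integral of $\exp\{i\sum_{j}z_{j}\widehat{X}_{t}(\phi_{j}^{q_{n}})\}$ equals $\exp\{-\tfrac{1}{2C^{2}}\sum_{j}\abs{\widehat{X}_{t}(\phi_{j}^{q_{n}})}^{2}\}$. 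Because the Gaussian is a probability measure, the defect $\epsilon_{n}$ appears exactly once, while the quadratic term integrates to $\tfrac{2}{C^{2}}\sum_{j}p_{n}(\phi_{j}^{q_{n}})^{2}\leq\tfrac{2}{C^{2}}\norm{i_{p_{n},q_{n}}}^{2}_{\mathcal{L}_{2}(\Phi_{q_{n}},\Phi_{p_{n}})}$; letting $C\to\infty$ gives \eqref{firstPartInequalityProofContCadlagVersionLCS}, i.e. the \emph{entire} series $\sup_{t\in D}\sum_{j}\abs{\widehat{X}_{t}(\phi_{j}^{q_{n}})}^{2}$ is controlled at the price of a single $\epsilon_{n}$ per level, and Borel--Cantelli is then applied over $n$ (using $\sum_{n}\epsilon_{n}<\infty$), not over $j$. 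Two further points where the paper is more careful than your sketch: the kernel directions are handled by the same Gaussian argument (giving $\widehat{X}_{t}(\varphi_{j,n})=0$ a.s.), and the basis is produced by Schmidt orthogonalization of a countable $\alpha$-dense set, so each $\xi_{k}$ is a \emph{finite} combination of basis vectors plus a $q_{n}$-kernel element; this is what allows claim \emph{(3)} to be proved by a subsequence argument from the continuity of $\widehat{X}:\Phi\to C_{T}(\R)$, without the pathwise interchange of $\widehat{X}_{t}$ with an infinite orthonormal expansion that your outline of \emph{(3)} implicitly performs.
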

\begin{prf} We follow similar arguments to those used by \Ito{} and Nawata in \cite{ItoNawata:1983}. Let $D$ be a countable dense subset of $[0,T]$. Let $\{ \epsilon_{n} \}_{n \in \N}$ be a sequence of positive numbers such that $\sum_{n \in \N} \epsilon_{n} < \infty$. From Lemma \ref{lemmPnContiCharacFunctSupremum} there exist and increasing sequence of continuous Hilbertian semi-norms $\{ p_{n} \}_{n \in \N}$ on $\Phi$ such that for each $n \in \N$, $\epsilon_{n}$ and $p_{n}$ satisfy  \eqref{pnContinuityCharactFuncSupremum}.

Now, as $\Phi$ is nuclear, there exists an increasing sequence of continuous Hilbertian semi-norms $\{ q_{n} \}_{n \in \N}$ on $\Phi$ such that for each $n \in \N$, $p_{n} \leq q_{n}$ and the inclusion $i_{p_{n},q_{n}}$ is Hilbert-Schmidt. Let $\alpha$ be the countably Hilbertian topology on $\Phi$ generated by the semi-norms $\{ q_{n} \}_{n \in \N}$. The space $\Phi_{\alpha}$ is separable (Proposition \ref{propWeakerCountablyHilbertianTopology}). Let $B=\{ \xi_{k}: k \in \N \}$  be a countable dense subset of $\Phi_{\alpha}$. For every $n \in \N$, from an application of the Schmidt orthogonalization procedure to $B$, we can find a complete orthonormal system $\{ \phi_{j}^{q_{n}} \}_{j \in \N} \subseteq \Phi$ of $\Phi_{q_{n}}$, such that 
\begin{equation} \label{decompDenseSetInTermsOrtoBasis}
\xi_{k}= \sum_{j=1}^{k} a_{j,k,n} \, \phi_{j}^{q_{n}} + \varphi_{k,n}, \quad \forall \, k \in \N, 
\end{equation}
with $a_{j,k,n} \in \R$ and $\varphi_{k,n} \in \mbox{Ker}(q_{n})$, for each $j,k \in \N$. 

Let $n \in \N$. From the inequality: $1-e^{-r/2} \geq 1-e^{-1/2} = \frac{\sqrt{e}-1}{\sqrt{e}}$ for $r >1$, for any $C>0$ we have 
\begin{flalign}
& \Prob \left( \sup_{t \in D}\sum_{j=1}^{\infty} \abs{\widehat{X}_{t}(\phi_{j}^{q_{n}})}^{2} > C^{2} \right) \nonumber \\
&  \leq   \frac{\sqrt{e}}{\sqrt{e}-1} \Exp \left( 1-\exp \left\{ - \frac{1}{2 C^{2}} \sup_{t \in D} \sum_{j=1}^{\infty} \abs{\widehat{X}_{t}(\phi_{j}^{q_{n}})}^{2} \right\}  \right) \nonumber \\
& \hfill = \lim_{m \rightarrow \infty} \frac{\sqrt{e}}{\sqrt{e}-1} \Exp  \sup_{t \in D} \left(  1-\exp \left\{ - \frac{1}{2 C^{2}} \sum_{j=1}^{m} \abs{\widehat{X}_{t}(\phi_{j}^{q_{n}})}^{2} \right\}  \right) \label{inequa1Step1RegulTheo} 
\end{flalign}
Now, setting $\phi=\sum_{j =1}^{m} z_{j} \phi_{j}^{q_{n}}$ for $z_{1}, \dots, z_{m} \in \R$, in \eqref{pnContinuityCharactFuncSupremum} for $p_{n}$ we have
\begin{equation} \label{contCharactFunctSumStep1RegulTheo}
\Exp \left( \sup_{t \in D} \abs{1-\exp\left\{ i\sum_{j=1}^{m} z_{j} \widehat{X}_{t}(\phi_{j}^{q_{n}}) \right\} } \right) \leq \epsilon_{n} + 2 \sum_{j=1}^{m} z_{j}^{2} p_{n}(\phi_{j}^{q_{n}})^{2}.  
\end{equation}
Integrating both sides of \eqref{contCharactFunctSumStep1RegulTheo} with respect to $\prod_{j=1}^{m} N_{C}(d z_{j})$, where $N_{C}$ is the centered Gaussian measure on $\R$ with variance $1/C^{2}$, we have 
\begin{equation} \label{inequa2Step1RegulTheo}
\int_{\R^{m}} \Exp \left( \sup_{t \in D} \abs{1-\exp\left\{ i\sum_{j=1}^{m} z_{j} \widehat{X}_{t}(\phi_{j}^{q_{n}}) \right\} } \right) \prod_{j=1}^{m} N_{C}(d z_{j}) \leq \epsilon_{n} + \frac{2}{C^{2}} \sum_{j=1}^{m} p_{n}(\phi_{j}^{q_{n}})^{2}.  
\end{equation}
On the other hand, as $\prod_{j=1}^{m} N_{C}(d z_{j})$ is a Gaussian measure on $\R^{m}$, for each $t \in [0,T]$ and $\omega \in \Omega$ we have
\begin{equation} \label{charactFunctGaussianStep1RegulTheo}
 \exp \left\{ - \frac{1}{2 C^{2}} \sum_{j=1}^{m} \abs{\widehat{X}_{t}(\phi_{j}^{q_{n}})(\omega)}^{2} \right\}
= \int_{\R^{m}} \exp \left\{ i\sum_{j=1}^{m} z_{j} \widehat{X}_{t}(\phi_{j}^{q_{n}})(\omega) \right\} \prod_{j=1}^{m} N_{C}(d z_{j}),
\end{equation}
and therefore from \eqref{charactFunctGaussianStep1RegulTheo} and the Fubini theorem it follows that 
\begin{multline} 
\Exp  \sup_{t \in D} \left(  1-\exp \left\{ - \frac{1}{2 C^{2}} \sum_{j=1}^{m} \abs{\widehat{X}_{t}(\phi_{j}^{q_{n}})}^{2} \right\}  \right) \\
\leq \int_{\R^{m}} \Exp \left( \sup_{t \in D} \abs{1-\exp\left\{ i\sum_{j=1}^{m} z_{j} \widehat{X}_{t}(\phi_{j}^{q_{n}}) \right\} } \right) \prod_{j=1}^{m} N_{C}(d z_{j}). \label{inequa3Step1RegulTheo}
\end{multline}
Then, from \eqref{inequa1Step1RegulTheo}, \eqref{inequa2Step1RegulTheo} and \eqref{inequa3Step1RegulTheo}, it follows that 
\begin{eqnarray*}
 \Prob \left( \sup_{t \in D}\sum_{j=1}^{\infty} \abs{\widehat{X}_{t}(\phi_{j}^{q_{n}})}^{2} > C^{2} \right) 
& \leq &   \lim_{m \rightarrow \infty} \frac{\sqrt{e}}{\sqrt{e}-1} \left( \epsilon_{n} + \frac{2}{C^{2}} \sum_{j=1}^{m} p_{n}(\phi_{j}^{q_{n}})^{2} \right) \\
& = & \frac{\sqrt{e}}{\sqrt{e}-1} \left( \epsilon_{n} + \frac{2}{C^{2}} \norm{i_{p_{n},q_{n}}}^{2}_{\mathcal{L}_{2}(\Phi_{q_{n}},\Phi_{p_{n}} )} \right), 
\end{eqnarray*}
where $\norm{i_{p_{n},q_{n}}}_{\mathcal{L}_{2}(\Phi_{q_{n}},\Phi_{p_{n}} )} < \infty$ as $i_{p_{n},q_{n}}$ is Hilbert-Schmidt. Letting $C \rightarrow \infty$, we get 
\begin{equation} \label{firstPartInequalityProofContCadlagVersionLCS}
\Prob \left( \sup_{t \in D} \sum_{j=1}^{\infty} \abs{\widehat{X}_{t}(\phi_{j}^{q_{n}})}^{2} <  \infty \right)  \geq 1- \frac{\sqrt{e}}{\sqrt{e}-1} \epsilon_{n}. 
\end{equation} 
Following the same arguments as above but now replacing $\phi_{j}^{q_{n}}$ for $\varphi_{j,n}$ and using the fact that $\varphi_{j,n} \in \mbox{Ker}(p_{n})$ for each $j \in \N$ (recall $p_{n} \leq q_{n}$), we have that  
\begin{equation} \label{secondPartInequalityProofContCadlagVersionLCS}
\Prob \left( \sup_{t \in D} \sum_{j=1}^{\infty} \abs{\widehat{X}_{t}(\varphi_{j,n})}^{2} >0 \right)  \leq \frac{\sqrt{e}}{\sqrt{e}-1} \epsilon_{n}. 
\end{equation}
Then, if we define $\Omega_{n} \subseteq \Omega$ by 
\begin{equation} \label{defiSetOmegaNRegulaTheo}
\Omega_{n}=\left\{ \omega: \sup_{t \in D} \sum_{j=1}^{\infty} \abs{\widehat{X}_{t}(\phi_{j}^{q_{n}})(\omega)}^{2} <  \infty \mbox{ and } \widehat{X}_{t}(\varphi_{j,n})(\omega) =0, \, \forall \, t \in D,  j \in \N \right\}, 
\end{equation}
it follows from \eqref{firstPartInequalityProofContCadlagVersionLCS} and \eqref{secondPartInequalityProofContCadlagVersionLCS} that 
\begin{equation} \label{probSetOmegaNRegulaTheo} 
\Prob \left( \Omega_{n} \right)  \geq 1- 2 \frac{\sqrt{e}}{\sqrt{e}-1} \epsilon_{n}. 
\end{equation}

The next point in our agenda is to define the set $\Omega_{Y}$ of $\Prob$-measure 1 and the stochastic processes $Y^{(n)}=\{ Y_{t}^{(n)} \}_{t \in [0,T]}$, $n \in \N$, that satisfy the properties \emph{(1)}-\emph{(3)} in the statement of the Lemma. But before, we set some additional notation. 

For every $n \in \N$, let $\Gamma_{n} \subseteq \Omega$ given by 
\begin{equation} \label{defiSetGammaNRegulTheo}
\Gamma_{n} =  \left\{ \omega: \forall \, j \in \N, \, t \mapsto \widehat{X}_{t}(\phi_{j}^{q_{n}})(\omega) \mbox{ is continuous} \right\}. 
\end{equation} 
Then, for each $n \in \N$ we have $\Prob (\Gamma_{n})=1$. Also, for each $n \in \N$ define $A_{n} \subseteq \Omega$ by
\begin{equation} \label{defiSetANRegulTheo}
A_{n} = \left\{ \omega: \widehat{X}_{t}(\xi_{k})(\omega)= \sum_{j=1}^{k} a_{j,k,n} \widehat{X}_{t}(\phi_{j}^{q_{n}})(\omega) + \widehat{X}_{t}(\varphi_{k,n})(\omega), \, \forall k \in \N, \, t \in D  \right\}.
\end{equation}
For every $n \in \N$, it follows from \eqref{decompDenseSetInTermsOrtoBasis} and the linearity of each $\widehat{X}_{t}$ that $\Prob(A_{n})=1$. Now, for $n \in \N$ define 
\begin{equation} \label{defiSetLambdaNRegulTheo}
\Lambda_{n} = \Omega_{n} \cap \Gamma_{n} \cap A_{n}. 
\end{equation}
Then, it follows from \eqref{probSetOmegaNRegulaTheo} and the fact that $\Prob (\Gamma_{n})=1$ and $\Prob(A_{n})=1$ that 
\begin{equation} \label{probSetLambdaNRegulTheo}
\Prob \left( \Lambda_{n} \right) \geq 1- 2 \frac{\sqrt{e}}{\sqrt{e}-1} \epsilon_{n}. 
\end{equation}
We are ready to define the stochastic processes $Y^{(n)}$, $n \in \N$. For each $n \in \N$, let $\{ f_{j}^{q_{n}} \}_{j \in \N}$ be a complete orthonormal system in $\Phi'_{q_{n}}$ dual to $\{\phi_{j}^{q_{n}} \}_{j \in \N}$, i.e. $f_{j}^{q_{n}}[\phi_{i}^{q_{n}}]=\delta_{i,j}$ where $\delta_{i,j}=1$ if $i=j$ and $\delta_{i,j}=0$ if $i \neq j$. For each $t \in [0,T]$, we define 
\begin{equation} \label{defiVersionYnRegularTheorem}
Y^{(n)}_{t}(\omega) \defeq 
\begin{cases}
\sum_{j=1}^{\infty} \widehat{X}_{t}(\phi_{j}^{q_{n}})(\omega) f_{j}^{q_{n}}, & \mbox{for } \omega \in \Lambda_{n}, \\
0, & \mbox{elsewhere.}
\end{cases}
\end{equation}
Note that $Y^{(n)}=\{ Y^{(n)}_{t}\}_{t \in [0,T]}$ is a well-defined $\Phi'_{q_{n}}$-valued stochastic process. This is because if $\omega \in \Lambda_{n}$, then the infinite sum in \eqref{defiVersionYnRegularTheorem} is convergent, as from Parseval's identity, \eqref{defiSetOmegaNRegulaTheo}, \eqref{defiSetLambdaNRegulTheo} and \eqref{defiVersionYnRegularTheorem}, we have:
\begin{equation} \label{versionYnRegularTheoremWellDefined}
\sup_{t \in [0,T]} q_{n}'(Y^{(n)}_{t}(\omega))^{2} = 
\sup_{t \in D} \sum_{j =1}^{\infty} \abs{\widehat{X}_{t}(\phi_{j}^{q_{n}})(\omega)}^{2} < \infty. 
\end{equation}
Moreover, it follows from \eqref{defiSetGammaNRegulTheo}, \eqref{defiSetLambdaNRegulTheo} and  \eqref{defiVersionYnRegularTheorem} that for every $\phi \in \Phi_{q_{n}}$, $Y^{(n)}[\phi]$ is a continuous real-valued process. Therefore, $Y^{(n)}$ satisfies the property \emph{(1)} in the statement of the Lemma. 

Also, from \eqref{defiVersionYnRegularTheorem} it follows that for each $\omega \in \Lambda_{n}$, $Y^{(n)}_{t}(\omega)[ i_{q_{n}}\phi_{j}^{q_{n}}]=\widehat{X}_{t}(\phi_{j}^{q_{n}})(\omega)$, for all $j \in \N$ and $t \in [0,T]$. Similarly, from the fact that $q_{n}(\varphi_{j,n})=0$ for all $j \in \N$, we have $\abs{f_{j}^{q_{n}}[i_{q_{n}} \varphi_{j,n}]} \leq q'_{n}(f_{j}^{q_{n}})q_{n}(i_{q_{n}} \varphi_{j,n})=0$ for all $j \in \N$, then \eqref{defiSetOmegaNRegulaTheo},  \eqref{defiSetLambdaNRegulTheo} and \eqref{defiVersionYnRegularTheorem} implies that for each $\omega \in \Lambda_{n}$, $Y^{(n)}_{t}(\omega)[i_{q_{n}} \varphi_{j,n}]=\widehat{X}_{t}(\varphi_{j,n})(\omega)=0$ for all $t \in D$, $j \in \N$. So, by \eqref{defiSetGammaNRegulTheo}, \eqref{defiSetANRegulTheo} and \eqref{defiSetLambdaNRegulTheo} we have 
\begin{equation} \label{equalityInDenseSetXAndVersionYn}
\forall \, \omega \in \Lambda_{n}, \quad Y^{(n)}_{t}(\omega)[i_{q_{n}} \xi_{k}]=\widehat{X}_{t}(\xi_{k})(\omega), \quad \forall \, k \in \N, \, t \in [0,T]. 
\end{equation}  
Now we are going to show that \eqref{equalityInDenseSetXAndVersionYn} implies that for every $\phi \in \Phi$, $Y^{(n)}_{t}[i_{q_{n}} \phi]=\widehat{X}_{t}(\phi)$ $\Prob$-a.e. on $\Lambda_{n}$, for all $t \in [0,T]$. 

Let $\phi \in \Phi$. Since $B=\{ \xi_{k}: k \in \N \}$ is dense in $\Phi_{\alpha}$, there exists a sequence $\{ \xi_{k_{j}} \}_{j \in \N} \subseteq B$ that $\alpha$-converges to $\phi$. As $\alpha$ is the countably Hilbertian topology generated by the semi-norms $\{ q_{n} \}_{n \in \N}$, then $\{ i_{q_{n}} \xi_{k_{j}} \}_{j \in \N}$ converges to $i_{q_{n}}\phi$ in $\Phi_{q_{n}}$ as $j \rightarrow \infty$. Therefore, $Y^{(n)}_{t}(\omega)[i_{q_{n}}\xi_{k_{j}}] \rightarrow Y^{(n)}_{t}(\omega)[i_{q_{n}} \phi]$ as $j \rightarrow \infty$, for all $t \in [0,T]$ and $\omega \in \Omega$.

On the other hand, observe that \eqref{continuityOrigenCHTRegularVersionOperator} implies that $\widehat{X}:\Phi \rightarrow C_{T}(\R)$, $\phi \mapsto \widehat{X}(\phi)$, is continuous with respect to the countably Hilbertian topology on $\Phi$ generated by the semi-norms $\{ p_{n} \}_{n \in \N}$ and since this topology is weaker than $\alpha$ (because $p_{n} \leq q_{n}$ for all $n \in \N$), then it is also $\alpha$-continuous. Therefore, there exists $\Delta_{\phi} \subseteq \Omega$ with $\Prob (\Delta_{\phi})=1$ and a subsequence $\{ \xi_{k_{j,\nu}} \}_{\nu \in \N}$ of $\{ \xi_{k_{j}} \}_{j \in \N}$ such that for all $\omega \in \Delta_{\phi}$, $\widehat{X}_{t}(\xi_{k_{j,\nu}})(\omega) \rightarrow \widehat{X}_{t}(\phi)(\omega)$, as $\nu \rightarrow \infty$, for all $t \in [0,T]$.

Then, \eqref{equalityInDenseSetXAndVersionYn} and the uniqueness of limits implies that 
\begin{equation} \label{equalityPaeXAndVersionYnOnPhi}
\forall \, \omega \in \Lambda_{n} \cap \Delta_{\phi}, \quad Y^{(n)}_{t}(\omega)[i_{q_{n}} \phi]=\widehat{X}_{t}(\phi)(\omega), \quad \forall \, t \in [0,T]. 
\end{equation}  
 
Our final step is to define the set $\Omega_{Y}$ and to verify that it and the processes $Y^{(n)}$, $n \in \N$, defined in \eqref{defiVersionYnRegularTheorem} satisfy the conditions \emph{(2)} and \emph{(3)} of the statement of the Lemma. First, it follows from \eqref{probSetLambdaNRegulTheo}, our assumption that $\sum_{n \in \N} \epsilon_{n} < \infty$, and the Borel-Cantelli lemma that  
\begin{equation} \label{defiSetOmegaY}
\Prob \left( \Omega_{Y} \right)=1, \quad \mbox{where}  \quad \Omega_{Y} \defeq \bigcup_{N \in \N} \bigcap_{n \geq N} \Lambda_{n}. 
\end{equation}
Let $\omega \in \Omega_{Y}$. Then, it follows from \eqref{versionYnRegularTheoremWellDefined} and \eqref{defiSetOmegaY} that there exists some $N(\omega)$ such that for all $n \geq N(\omega)$, $\sup_{t \in D} q_{n}'(Y^{(n)}_{t}(\omega))< \infty$. Thus, the property \emph{(2)(a)} in the statement of the Lemma is satisfied. Moreover, from \eqref{equalityInDenseSetXAndVersionYn} and \eqref{defiSetOmegaY} there exists $N(\omega)$ such that for all $m \geq n \geq N(\omega)$, for every $k \in \N$ and $t \in [0,T]$ we have $Y^{(m)}_{t}(\omega)[i_{q_{m}} \xi_{k}]=Y^{(n)}_{t}(\omega)[i_{q_{n}} \xi_{k}]$. But as $B=\{ \xi_{j} \}_{j \in \N}$ is dense $\Phi_{\alpha}$, and therefore in $\Phi_{q_{m}}$ and in $\Phi_{q_{n}}$, then it follows that for all $t \in [0,T]$, $Y^{(m)}_{t}(\omega)[i_{q_{m}} \phi]=Y^{(n)}_{t}(\omega)[i_{q_{n}} \phi]$ for all $\phi \in \Phi$, that is $Y^{(m)}_{t}(\omega)=i'_{q_{n},q_{m}} Y^{(n)}_{t}(\omega)$. Hence, the property \emph{(2)(b)} of the statement of the Lemma is also satisfied. 

Finally, the property \emph{(3)} of the statement of the Lemma is a consequence of \eqref{equalityPaeXAndVersionYnOnPhi} and \eqref{defiSetOmegaY}. 
\end{prf}

\begin{proof}[Proof of Proposition \ref{propRegulTheoremCadlagContinuousVersionInBoundedInterval}]
We use similar arguments to those used by Mitoma in \cite{Mitoma:1983}. Let $\{q_{n}\}_{n \in \N}$, $\Omega_{Y}$ and $\{ Y^{(n)} \}_{n \in \N}$ be as given in Lemma \ref{lemmaRegulaTheo}. 

Let $\{ \varrho_{n} \}_{n \in \N}$ be an increasing sequence of continuous Hilbertian semi-norm on $\Phi$ such that for every $n \in \N$, $q_{n} \leq \varrho_{n}$ and $i_{q_{n}, \varrho_{n}}$ is Hilbert-Schmidt. Let $\vartheta$ be the countably Hilbertian topology on $\Phi$ generated by the semi-norms $\{ \varrho_{n}\}_{n \in \N}$. The topology $\vartheta$ is weaker than the nuclear topology on $\Phi$. 

Moreover, from Proposition \ref{propWeakerCountablyHilbertianTopology} we have that the completion $\widetilde{\Phi_{\vartheta}}$ of $\Phi_{\vartheta}$ satisfies  
\begin{equation} \label{dualCHTProofRegulTheo}
(\widetilde{\Phi_{\vartheta}})'= \bigcup_{n \in \N} \Phi'_{\varrho_{n}}.  
\end{equation}
Now, for every $n \in \N$ the map $i_{\varrho_{n}}$ is linear and continuous from $\Phi_{\vartheta}$ into $\Phi_{\varrho_{n}}$, and therefore $i_{\varrho_{n}}$ has a unique continuous and linear extension $\widetilde{i}_{\varrho_{n}}: \widetilde{\Phi_{\vartheta}} \rightarrow \Phi_{\varrho_{n}}$ (see \cite{Jarchow}, Theorem 2, Section 3.4, p.61-2). Hence, for each $n \in \N$ the dual map $(\widetilde{i}_{\varrho_{n}})': \Phi'_{\varrho_{n}} \rightarrow (\widetilde{\Phi_{\vartheta}})'_{\beta}$ is linear and continuous, and corresponds to the canonical inclusion from $\Phi'_{\varrho_{n}}$ into $(\widetilde{\Phi_{\vartheta}})'_{\beta}$. 

Let $Y=\{ Y_{t} \}_{t \in [0,T]}$ be defined for each $t \in [0,T]$ by
\begin{equation} \label{defiContVersionY}
Y_{t}(\omega) \defeq 
\begin{cases}
i'_{q_{n}, \varrho_{n}} Y_{t}^{(n)}(\omega), & \mbox{for } \omega \in \Omega_{Y}, \, n \geq N(\omega), \\
0, & \mbox{elsewhere}. 
\end{cases}
\end{equation}
For every $t \in [0,T]$, $Y_{t}$ is well-defined $(\widetilde{\Phi_{\vartheta}})'_{\beta}$-valued random variable. In effect, it is clear from \eqref{dualCHTProofRegulTheo} and \eqref{defiContVersionY} that $Y_{t}$ takes values in $(\widetilde{\Phi_{\vartheta}})'$. Now, as for all $m, n \in \N$, $m \geq n$, we have $q_{n} \leq q_{m}$, $q_{n} \leq \varrho_{n}$, and $q_{m} \leq \varrho_{m}$, then the following diagram commutes: 
\begin{equation} \label{commutDiagRegularVersion}
\begin{gathered}
\xymatrix{ 
\Phi'_{q_{n}} \ar[r]^{i'_{q_{n},\varrho_{n}}} \ar[d]_{i'_{q_{n},q_{m}}} & \Phi'_{\varrho_{n}} \ar[d]^{i'_{\varrho_{n},\varrho_{m}}} \\
\Phi'_{q_{m}} \ar[r]_{i'_{q_{m},\varrho_{m}}} & \Phi'_{\varrho_{m}}
}
\end{gathered}
\end{equation}
Then Lemma \ref{lemmaRegulaTheo}(2)(b) and \eqref{commutDiagRegularVersion} implies that for each $\omega \in \Omega_{Y}$, if $m \geq n \geq N(\omega)$ we have  
$$ i'_{q_{m}, \varrho_{m}} Y^{(m)}_{t}(\omega)= i'_{q_{m}, \varrho_{m}} \circ i'_{q_{n}, q_{m}} Y^{(n)}_{t}(\omega) =  i'_{\varrho_{n}, \varrho_{m}} \circ i'_{q_{n}, \varrho_{n}} Y^{(n)}_{t}(\omega), \quad \forall \, t \in [0,T]. $$  
Therefore, $Y_{t}$ is well-defined. Finally, the fact that $Y_{t}$ is $\mathcal{B}((\widetilde{\Phi_{\vartheta}})'_{\beta})/\mathcal{F}$-measurable map is a consequence of \eqref{defiContVersionY}, the fact that $Y^{(n)}_{t}$ is a $\Phi'_{q_{n}}$-valued random variable and that $i'_{q_{n},\varrho_{n}}$ is continuous, for every $n \in \N$. 

Now we are going to show that $Y$ is a $(\widetilde{\Phi_{\vartheta}})'_{\beta}$-valued continuous process. For every $n \in \N$, let $\{ \phi_{j}^{\varrho_{n}} \}_{j \in \N} \subseteq \Phi$ be a complete orthonormal system in $\Phi_{\varrho_{n}}$. Fix $\omega \in \Omega_{Y}$ and let $n \geq N(\omega)$. Then, from the definition of the dual operator $i'_{q_{n}, \varrho_{n}}$ of $i_{q_{n}, \varrho_{n}}$ and Lemma \ref{lemmaRegulaTheo}(1)-(2)(a), we have
\begin{eqnarray}
\sum_{j =1}^{\infty} \sup_{t \in [0,T]} \abs{ i'_{q_{n}, \varrho_{n}} Y_{t}^{(n)}(\omega) [\phi_{j}^{\varrho_{n}}]}^{2}   
& \leq & \sum_{j =1}^{\infty} \sup_{t \in [0,T]} q'_{n}(Y_{t}^{(n)}(\omega))^{2} q_{n}(i_{q_{n}, \varrho_{n}}\phi_{j}^{\varrho_{n}})^{2}   \nonumber  \\
& = & \left( \sup_{t \in [0,T]} q_{n}'(Y^{(n)}_{t}(\omega))^{2} \right) \norm{i_{q_{n}, \varrho_{n}}}^{2}_{\mathcal{L}_{2}(\Phi_{\varrho_{n}},\Phi_{q_{n}})} \nonumber \\ 
& < & \infty, \label{uniformBoundPathsRegularVersionY}
\end{eqnarray}
where $\norm{i_{q_{n}, \varrho_{n}}}_{\mathcal{L}_{2}(\Phi_{\varrho_{n}},\Phi_{q_{n}})} < \infty$ because $i_{q_{n},\varrho_{n}}$ is Hilbert-Schmidt. 

Next we prove the right continuity of the map $t \mapsto Y_{t}(\omega)$ in $\Phi'_{\varrho_{n}}$. Let $0 \leq t < T$. Then, from \eqref{defiContVersionY}, Parseval's identity, \eqref{uniformBoundPathsRegularVersionY}, the dominated convergence theorem and the continuity of each map $t \mapsto Y^{(n)}_{t}(\omega)[ i_{q_{n}, \varrho_{n}} \phi_{j}^{\varrho_{n}}]$ (Lemma \ref{lemmaRegulaTheo}(1)), we have 
\begin{eqnarray*}
\lim_{s \rightarrow t+} \varrho'_{n} ( Y_{t}(\omega) - Y_{s}(\omega))^{2} 
& = & \lim_{s \rightarrow t+} \sum_{j =1}^{\infty} \abs{ i'_{q_{n}, \varrho_{n}} (Y_{t}^{(n)}(\omega)-Y_{s}^{(n)}(\omega))[\phi_{j}^{\varrho_{n}}]}^{2} \\
& = & \sum_{j =1}^{\infty} \lim_{s \rightarrow t+} \abs{ Y_{t}^{(n)}(\omega)[i_{q_{n}, \varrho_{n}}  \phi_{j}^{\varrho_{n}}]-Y_{s}^{(n)}(\omega)[i_{q_{n}, \varrho_{n}}  \phi_{j}^{\varrho_{n}}]}^{2}  \\
& = & 0. 
\end{eqnarray*}
Therefore, the map $t \mapsto Y_{t}(\omega)$ is right-continuous in $\Phi'_{\varrho_{n}}$. The left continuity of $t \mapsto Y_{t}(\omega)$ in $\Phi'_{\varrho_{n}}$ can be proven similarly. Moreover, as the canonical inclusion $(\widetilde{i}_{\varrho_{n}})': \Phi'_{\varrho_{n}} \rightarrow (\widetilde{\Phi_{\vartheta}})'_{\beta}$ is continuous, then the continuity of $t \mapsto Y_{t}(\omega)$ in $\Phi'_{\varrho_{n}}$ implies that the map $t \mapsto (\widetilde{i}_{\varrho_{n}})' Y_{t}(\omega)$ is continuous from $[0,\infty)$ into $(\widetilde{\Phi_{\vartheta}})'_{\beta}$. Hence, $Y$ is a $(\widetilde{\Phi_{\vartheta}})'_{\beta}$-valued continuous process. 

Finally, the fact that for every $\phi \in \Phi$, $Y[\phi]$ is a version of $X(\phi)$ is a direct consequence of Lemma \ref{lemmaRegulaTheo}(3), \eqref{defiContVersionY} and because $\widehat{X}(\phi)$ is a version of $X(\phi)$. 

\end{proof}

\begin{proof}[Proof of Theorem \ref{theoRegularizationTheoremCadlagContinuousVersion}]
Let $\{ T_{k} \}_{k \in \N}$ an increasing sequence of positive numbers such that $\lim_{k \rightarrow \infty} T_{k}= \infty$. From Proposition \ref{propRegulTheoremCadlagContinuousVersionInBoundedInterval}, for every $k \in \N$ there exists a countably Hilbertian topology $\vartheta_{T_{k}}$ on $\Phi$ determined by an increasing sequence $\{ \varrho_{k,n} \}_{n \in \N}$ of continuous Hilbertian semi-norms on $\Phi$, and a $(\widetilde{\Phi_{\vartheta_{T_{k}}}})'_{\beta}$-valued continuous process $Y^{(T_{k})}= \{ Y^{(T_{k})}_{t} \}_{t \in [0,T_{k}]}$, such that for every $\phi \in \Phi$, $\{ Y^{(T_{k})}_{t}[\phi] \}_{t \in [0,T_{k}]}$ is a version of $\{ X_{t}(\phi) \}_{t \in [0,T_{k}]}$. 
%

Without loss of generality we can assume that for every $k \in \N$, $\varrho_{k,n} \leq \varrho_{k+1,n}$, for all $n \in \N$. This implies that for every $k \in \N$, the topology $\vartheta_{T_{k+1}}$ is weaker than $\vartheta_{T_{k}}$ and therefore that the canonical inclusion $i_{\vartheta_{T_{k}}, \vartheta_{T_{k+1}}}: \Phi_{\vartheta_{T_{k+1}}} \rightarrow \Phi_{\vartheta_{T_{k}}}$ is linear and continuous. Then, this map has a unique continuous and linear extension $\widetilde{i}_{\vartheta_{k}, \vartheta_{k+1}}: \widetilde{\Phi_{\vartheta_{T_{k+1}}}} \rightarrow \widetilde{\Phi_{\vartheta_{T_{k}}}}$. Hence, the dual map $(\widetilde{i}_{\vartheta_{T_{k}}, \vartheta_{T_{k+1}}})': (\widetilde{\Phi_{\vartheta_{T_{k}}}})'_{\beta} \rightarrow (\widetilde{\Phi_{\vartheta_{T_{k+1}}}})'_{\beta}$ is also linear and continuous, and corresponds to the canonical inclusion from $(\widetilde{\Phi_{\vartheta_{T_{k}}}})'_{\beta}$ into $(\widetilde{\Phi_{\vartheta_{T_{k+1}}}})'_{\beta}$. 

Let $\vartheta_{X}$ be the countably Hilbertian topology on $\Phi$ generated by the semi-norms $\{ \varrho_{k,n}: n, k \in \N\}$. Then, for each $k \in \N$ the topology $\vartheta_{T_{k}}$ is weaker than $\vartheta_{X}$. Hence the canonical inclusion $i_{\vartheta_{T_{k}},\vartheta_{X}}$ from $\Phi_{\vartheta_{X}}$ into $\Phi_{\vartheta_{T_{k}}}$ is linear and continuous. This map has an extension $\widetilde{i}_{\vartheta_{T_{k}},\vartheta_{X}}:\widetilde{\Phi_{\vartheta_{X}}} \rightarrow \widetilde{\Phi_{\vartheta_{T_{k}}}}$ that is also linear and continuous. The dual operator $(\widetilde{i}_{\vartheta_{T_{k}},\vartheta_{X}})':  (\widetilde{\Phi_{\vartheta_{T_{k}}}})'_{\beta} \rightarrow (\widetilde{\Phi_{\vartheta_{X}}})'_{\beta}$ corresponds to the  canonical inclusion from $(\widetilde{\Phi_{\vartheta_{T_{k}}}})'_{\beta}$ into $ (\widetilde{\Phi_{\vartheta_{X}}})'_{\beta}$ and is linear and continuous. For every $k \in \N$, one can easily verify that 
$\widetilde{i}_{\vartheta_{T_{k}},\vartheta_{X}}= \widetilde{i}_{\vartheta_{k}, \vartheta_{k+1}} \circ \widetilde{i}_{\vartheta_{T_{k+1}},\vartheta_{X}}$ and hence $(\widetilde{i}_{\vartheta_{T_{k}},\vartheta_{X}})'=(\widetilde{i}_{\vartheta_{T_{k}},\vartheta_{X}})' \circ (\widetilde{i}_{\vartheta_{T_{k}}, \vartheta_{T_{k+1}}})'$.  
 
Take $Y=\{ Y_{t} \}_{t \geq 0}$ defined by the prescription $Y_{t}=Y^{(T_{k})}_{t}$ if $t \in [0,T_{k}]$. Then $Y$ is a $(\widetilde{\Phi_{\vartheta_{X}}})'_{\beta}$-valued continuous process. To prove this, note that for any $k \in \N$, $\{ (\widetilde{i}_{\vartheta_{T_{k}}, \vartheta_{T_{k+1}}})' Y^{(T_{k})}_{t} \}_{t \in [0,T_{k}]}$ and $\{Y^{(T_{k+1})}_{t} \}_{t \in [0,T_{k}]}$ are continuous $(\widetilde{\Phi_{\vartheta_{T_{k+1}}}})'_{\beta}$-valued processes. Moreover, for every $\phi \in \Phi$ and $t \in T_{k}$ we have $\Prob$-a.e.  
\begin{align*}
(\, \widetilde{i}_{\vartheta_{T_{k}}, \vartheta_{T_{k+1}}})' \, Y^{(T_{k})}_{t}[ \, \widetilde{i}_{\vartheta_{T_{k+1}},\vartheta_{X}} \phi] & = Y^{(T_{k})}_{t}[ \, \widetilde{i}_{\vartheta_{T_{k}}, \vartheta_{T_{k+1}}} \circ \, \widetilde{i}_{\vartheta_{T_{k+1}},\vartheta_{X}} \phi] \\
{ } & =  Y^{(T_{k})}_{t}[ \, \widetilde{i}_{\vartheta_{T_{k}},\vartheta_{X}} \phi] = X_{t}(\phi) = Y^{(T_{k+1})}_{t} [ \, \widetilde{i}_{\vartheta_{T_{k+1}},\vartheta_{X}} \phi].
\end{align*}
Then, Proposition \ref{propCondiIndistingProcess} shows that $\{ (\widetilde{i}_{\vartheta_{T_{k}}, \vartheta_{T_{k+1}}})' Y^{(T_{k})}_{t} \}_{t \in [0,T_{k}]}$ and $\{Y^{(T_{k+1})}_{t} \}_{t \in [0,T_{k}]}$ are indistinguishable processes in $(\widetilde{\Phi_{\vartheta_{T_{k+1}}}})'_{\beta}$. Therefore, $Y$ is well-defined $ (\widetilde{\Phi_{\vartheta_{X}}})'_{\beta}$-valued process.

Now, because for each $k \in \N$ the $(\widetilde{\Phi_{\vartheta_{T_{k}}}})'_{\beta}$-valued process $\{ Y^{(T_{k})}_{t}\}_{ t \in [0,T_{k}]}$ is continuous and the map $(\widetilde{i}_{\vartheta_{T_{k}},\vartheta_{X}})'$ is continuous, it follows that $\{ Y_{t} \}_{t \in [0,T_{k}]}=\{ (\widetilde{i}_{\vartheta_{T_{k}},\vartheta_{X}})' \, Y^{(T_{k})}_{t}\}_{ t \in [0,T_{k}]}$ is a 
$ (\widetilde{\Phi_{\vartheta_{X}}})'_{\beta}$-valued continuous process. Therefore, $Y$ is a continuous processes in $ (\widetilde{\Phi_{\vartheta_{X}}})'_{\beta}$. 

Moreover, by the properties of the processes $Y^{(T_{k})}$ and the definition of $Y$ it follows that for every $\phi \in \Phi$, $Y[\phi]$ is a version of $X(\phi)$. Now, as the topology $\vartheta_{X}$ is weaker than the nuclear topology on $\Phi$, then $\Phi$ (equipped with the nuclear topology) is continuously embedded in $\Phi_{\theta_{X}}$, but as this last is continuously embedded in $\widetilde{\Phi_{\vartheta_{X}}}$, it follows that $\Phi$ is continuously embedded in $\widetilde{\Phi_{\vartheta_{X}}}$. Then, the space $ (\widetilde{\Phi_{\vartheta_{X}}})'_{\beta}$ is continuously embedded in $\Phi'_{\beta}$, and this implies that $Y$ is a $\Phi'_{\beta}$-valued, regular, continuous process that is a version of $X$. 

Finally, to prove the uniqueness note that if $Z$ is another version of $X$ satisfying the properties in the Theorem, then for every $\phi \in \Phi$ and $t \geq 0$, $Y_{t}[\phi]=X_{t}(\phi)=Z_{t}(\phi)$ $\Prob$-a.e., and because $Y$ and $Z$ are both continuous and regular $\Phi'_{\beta}$-valued processes it follows from Proposition \ref{propCondiIndistingProcess} that $Y$ and $Z$ are indistinguishable processes. 
\end{proof}

\begin{rema}
It follows from the proof of Theorem \ref{theoRegularizationTheoremCadlagContinuousVersion} (in particular from the proof of Proposition \ref{propRegulTheoremCadlagContinuousVersionInBoundedInterval}) that the $ (\widetilde{\Phi_{\vartheta_{X}}})'_{\beta}$-valued continuous (respectively c\`{a}dl\`{a}g) version $Y$ of $X$ can be chosen in such a way so that for every $\omega \in \Omega$ and $T>0$ there exists a continuous Hilbertian semi-norm $\varrho=\varrho(\omega,T)$ on $\Phi$ such that the map $t \mapsto Y_{t}(\omega)$ is continuous (respectively c\`{a}dl\`{a}g) from $[0,T]$ into the Hilbert space $\Phi'_{\varrho}$. 
\end{rema}

The assumption \emph{(2)} in Theorem \ref{theoRegularizationTheoremCadlagContinuousVersion} of equicontinuity for every $T>0$ of the family of cylindrical random variables $\{ X_{t} : t \in [0,T]\}$ was important to show that the map $\phi \mapsto \{ \widehat{X}_{t}(\phi) \}_{t \in [0,T]}$ from $\Phi$ into $C_{T}(\R)$ (respectively $D_{T}(\R)$) is continuous (Lemma \ref{lemmContiXAsMapToSpaceContiProcesses}). The next results shows that we can obtain the same conclusion without the above local equicontinuity property under the additional assumption that $\Phi$ is ultrabornological, i.e. that $\Phi$ is the inductive limit of Banach spaces (see \cite{NariciBeckenstein}, Theorem 13.2.11, p.448-9). Note that in the result below no nuclearity of the space is required.  

\begin{prop}\label{propContinuityXAsMapToSpaceContiRealProcessUltrabornological}
Let $\Psi$ be an ultrabornological space and let $X=\{X_{t} \}_{t \in [0,T]}$, be a cylindrical process in $\Psi'$ such that for each $\psi \in \Psi$, the real-valued process $\{ X_{t}(\psi) \}_{t \in [0,T]}$ has a continuous (respectively c\`{a}dl\`{a}g) version. Assume that for every $t \in [0,T]$ the map $X_{t}:  \Psi \rightarrow L^{0} \ProbSpace$ is continuous. Then, the linear mapping from $\Psi$ into $C_{T}(\R)$ (respectively $D_{T}(\R)$) given by $\psi \mapsto \{ X_{t}(\psi) \}_{t \in [0,T]}$ is continuous.
\end{prop}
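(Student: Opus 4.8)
The plan is to follow the closed-graph strategy of Lemma \ref{lemmContiXAsMapToSpaceContiProcesses}, but without the preliminary reduction to a weaker countably Hilbertian (hence metrizable) topology---no equicontinuity is available to produce one---and to replace that reduction by a direct appeal to the ultrabornological structure of $\Psi$. Write $u$ for the linear map from $\Psi$ into $C_{T}(\R)$ (respectively $D_{T}(\R)$) given by $\psi \mapsto \{ X_{t}(\psi) \}_{t \in [0,T]}$, where each $\{ X_{t}(\psi) \}_{t \in [0,T]}$ is identified with its continuous (respectively c\`{a}dl\`{a}g) version; this version exists by hypothesis and is unique up to indistinguishability, so $u$ is well defined as a map into $C_{T}(\R)$, and its linearity follows from the linearity of each cylindrical random variable $X_{t}$ together with the $\Prob$-a.e. uniqueness of continuous (respectively c\`{a}dl\`{a}g) versions.

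First I would check that the graph of $u$ is closed. Suppose $\psi_{i} \to \psi$ in $\Psi$ and $u(\psi_{i}) \to Y$ in $C_{T}(\R)$. For each fixed $t \in [0,T]$ the assumed continuity of $X_{t} : \Psi \to L^{0}\ProbSpace$ gives $X_{t}(\psi_{i}) \to X_{t}(\psi)$ in probability, while convergence in $C_{T}(\R)$ is uniform convergence in probability on $[0,T]$ and hence forces $X_{t}(\psi_{i}) \to Y_{t}$ in probability for every $t$. By uniqueness of limits in $L^{0}\ProbSpace$ we obtain $X_{t}(\psi) = Y_{t}$ $\Prob$-a.e. for each $t$; since $u(\psi)$ and $Y$ are both continuous (respectively c\`{a}dl\`{a}g) and agree $\Prob$-a.e. on the countable dense set $\Q \cap [0,T]$, they are indistinguishable, that is, equal in $C_{T}(\R)$. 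Thus $Y = u(\psi)$ and the graph is closed. Note that this argument uses only coordinatewise limits, so it applies to nets and yields genuine (not merely sequential) closedness; it is exactly here that the fixed-time continuity $X_{t} : \Psi \to L^{0}\ProbSpace$ is used, playing the role that equicontinuity played in Lemma \ref{lemmContiXAsMapToSpaceContiProcesses}.

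The final step is to deduce continuity of $u$ from the closedness of its graph, and this is where I expect the main obstacle to lie. Since $\Psi$ is ultrabornological it is the inductive limit of a family of Banach spaces, and on each such Banach space---being a Baire space---the restriction of $u$ has closed graph, so the closed graph theorem already used in Lemma \ref{lemmContiXAsMapToSpaceContiProcesses} (Baire domain, complete metrizable target $C_{T}(\R)$) gives continuity of each restriction. The hard part is the passage from continuity on each Banach piece to continuity of $u$ on all of $\Psi$: because $C_{T}(\R)$ is \emph{not} locally convex, the universal property of the locally convex inductive limit cannot be invoked directly, and this is precisely the point at which the hypothesis that $\Psi$ be ultrabornological is needed rather than merely bornological. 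I would close this gap by invoking the closed graph theorem in the form valid for an ultrabornological domain and a complete metrizable topological vector space target (such targets are webbed, so De Wilde's closed graph theorem applies), which yields that the globally defined closed linear map $u$ is continuous. The c\`{a}dl\`{a}g case is identical, with $D_{T}(\R)$ in place of $C_{T}(\R)$.
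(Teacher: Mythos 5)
Your proof follows essentially the same route as the paper's: you establish closedness of the graph of $\psi \mapsto \{X_{t}(\psi)\}_{t \in [0,T]}$ using the fixed-time continuity of each $X_{t}:\Psi \rightarrow L^{0}\ProbSpace$ together with uniqueness of limits in $L^{0}\ProbSpace$, and then you invoke the closed graph theorem for an ultrabornological domain and a complete metrizable (not necessarily locally convex) target, which is exactly the form of the theorem the paper cites from \cite{Jarchow}. Your two added remarks---that the coordinatewise argument yields closedness along nets (the paper only claims sequential closedness), and that the failure of local convexity of $C_{T}(\R)$ is what blocks the naive inductive-limit reduction and forces the webbed-space (De Wilde/Robertson) form of the closed graph theorem---are sound clarifications of that same approach rather than a different proof.
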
 
\begin{prf}
We can prove that the map $\psi \mapsto \{ X_{t}(\psi) \}_{t \in [0,T]}$ from $\Psi$ into $C_{T}(\R)$ (respectively $D_{T}(\R)$) is sequentially closed by following similar arguments to those used in the proof of Lemma \ref{lemmContiXAsMapToSpaceContiProcesses}. Then, as $\Psi$ is ultrabornological and $C_{T}(\R)$ (respectively $D_{T}(\R)$) is a complete, metrizable, topological vector space, the closed graph theorem (see \cite{Jarchow}, Proposition 2, Section 5.2 and Theorem 2, Section 5.4, p.90,94) shows that $\psi \mapsto \{ X_{t}(\psi) \}_{t \in [0,T]}$ is continuous.    
\end{prf}

We have the following version of the regularization theorem for cylindrical processes in the dual of an ultrabornological nuclear space. 

\begin{coro} \label{coroRegulTheoUltrabornologicalSpace}
Let $\Phi$ be an ultrabornological nuclear space. Let $X=\{X_{t} \}_{t \geq 0}$ be a cylindrical process in $\Phi'$ satisfying:
\begin{enumerate}
\item For each $\phi \in \Phi$, the real-valued process $X(\phi)=\{ X_{t}(\phi) \}_{t \geq 0}$ has a continuous (respectively c\`{a}dl\`{a}g) version.
\item For every $t \geq 0$, $X_{t}:\Phi \rightarrow L^{0} \ProbSpace$ is continuous.
\end{enumerate}
Then, there exists a countably Hilbertian topology $\vartheta_{X}$ on $\Phi$ 
and a $(\widetilde{\Phi_{\vartheta_{X}}})'_{\beta}$-valued continuous (respectively c\`{a}dl\`{a}g) process $Y= \{ Y_{t} \}_{t \geq 0}$, such that for every $\phi \in \Phi$, $Y[\phi]= \{ Y_{t}[\phi] \}_{t \geq 0}$ is a version of $X(\phi)= \{ X_{t}(\phi) \}_{t \geq 0}$. Moreover, $Y$ is a $\Phi'_{\beta}$-valued, regular, continuous (respectively c\`{a}dl\`{a}g) version of $X$ that is unique up to indistinguishable versions. 
\end{coro}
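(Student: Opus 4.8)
The plan is to reuse the proof of Theorem~\ref{theoRegularizationTheoremCadlagContinuousVersion} essentially verbatim, replacing only the single step where the local equicontinuity hypothesis was invoked. First I would observe that equicontinuity of the family $\{X_t : t \in [0,T]\}$ entered the proof of Theorem~\ref{theoRegularizationTheoremCadlagContinuousVersion} at exactly one point: it was used in Lemma~\ref{lemmContiXtOnCHT}, and thence in Lemma~\ref{lemmContiXAsMapToSpaceContiProcesses}, to conclude that the linear map $\widehat{X}\colon \Phi \to C_T(\R)$ (respectively $D_T(\R)$), $\phi \mapsto \{\widehat{X}_t(\phi)\}_{t \in [0,T]}$, is continuous. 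Every subsequent step---Lemma~\ref{lemmPnContiCharacFunctSupremum}, Lemma~\ref{lemmaRegulaTheo}, the proof of Proposition~\ref{propRegulTheoremCadlagContinuousVersionInBoundedInterval}, and the final gluing argument over the intervals $[0,T_k]$---uses only this continuity together with the nuclearity of $\Phi$, and never appeals to equicontinuity again.

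Accordingly, the key step is to establish the continuity of $\widehat{X}\colon \Phi \to C_T(\R)$ (respectively $D_T(\R)$) under the corollary's hypotheses. The continuous (respectively c\`{a}dl\`{a}g) versions $\widehat{X}(\phi)$ supplied by hypothesis~(1) determine, for each $\phi$, an element of $C_T(\R)$ (respectively $D_T(\R)$), and hypothesis~(2) gives that each $X_t\colon \Phi \to L^0\ProbSpace$ is continuous. Since $\Phi$ is assumed ultrabornological, Proposition~\ref{propContinuityXAsMapToSpaceContiRealProcessUltrabornological} applies directly (with $\Psi = \Phi$) and yields exactly the desired continuity of $\phi \mapsto \{\widehat{X}_t(\phi)\}_{t \in [0,T]}$ into $C_T(\R)$ (respectively $D_T(\R)$). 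As this is precisely the conclusion of Lemma~\ref{lemmContiXAsMapToSpaceContiProcesses}, it can be inserted in its place.

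With this substitution in hand, I would run the remainder of the argument unchanged. For each fixed $T>0$, Lemmas~\ref{lemmPnContiCharacFunctSupremum} and~\ref{lemmaRegulaTheo} go through verbatim---they consume only the $C_T(\R)$-continuity and the Hilbert-Schmidt factorization coming from nuclearity---producing the bounded-interval version $Y^{(T)}$ of Proposition~\ref{propRegulTheoremCadlagContinuousVersionInBoundedInterval}. Then the gluing construction in the proof of Theorem~\ref{theoRegularizationTheoremCadlagContinuousVersion}---choosing $T_k \uparrow \infty$, passing to the common countably Hilbertian topology $\vartheta_X$ generated by all the semi-norms $\varrho_{k,n}$, and patching the processes $Y^{(T_k)}$ together via the canonical inclusions and Proposition~\ref{propCondiIndistingProcess}---delivers the global $(\widetilde{\Phi_{\vartheta_X}})'_\beta$-valued continuous (respectively c\`{a}dl\`{a}g) process $Y$, its regularity, the property that $Y[\phi]$ is a version of $X(\phi)$ for every $\phi$, and finally uniqueness up to indistinguishability via Proposition~\ref{propCondiIndistingProcess}.

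I do not anticipate a genuine obstacle: the only substantive point is recognising that equicontinuity was used solely to obtain the $C_T(\R)$-continuity of $\widehat{X}$, and that Proposition~\ref{propContinuityXAsMapToSpaceContiRealProcessUltrabornological} supplies the same continuity from the weaker pointwise hypothesis once $\Phi$ is ultrabornological. The one mild point requiring care is the interplay between the fixed continuous (respectively c\`{a}dl\`{a}g) versions $\widehat{X}(\phi)$ used throughout and the cylindrical process $X$ to which Proposition~\ref{propContinuityXAsMapToSpaceContiRealProcessUltrabornological} is applied; but since that proposition is phrased in terms of the continuous versions landing in $C_T(\R)$ (respectively $D_T(\R)$), the two viewpoints coincide and no extra work is needed.
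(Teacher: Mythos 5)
Your proposal is correct and follows essentially the same route as the paper's own proof: the paper likewise fixes $T>0$, invokes Proposition \ref{propContinuityXAsMapToSpaceContiRealProcessUltrabornological} to recover the continuity of $\phi \mapsto \{\widehat{X}_{t}(\phi)\}_{t \in [0,T]}$ into $C_{T}(\R)$ (respectively $D_{T}(\R)$) in place of the equicontinuity-based Lemma \ref{lemmContiXAsMapToSpaceContiProcesses}, feeds this into Lemma \ref{lemmPnContiCharacFunctSupremum}, and then replicates the rest of the proofs of Proposition \ref{propRegulTheoremCadlagContinuousVersionInBoundedInterval} and Theorem \ref{theoRegularizationTheoremCadlagContinuousVersion} without change. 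Your identification of the unique point where equicontinuity enters, and your remark on the compatibility of the fixed versions $\widehat{X}(\phi)$ with the hypotheses of that proposition, are both accurate.
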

\begin{prf}
Let $T>0$. With the same terminology as in the proof of Proposition \ref{propRegulTheoremCadlagContinuousVersionInBoundedInterval}, it follows from Proposition \ref{propContinuityXAsMapToSpaceContiRealProcessUltrabornological} that the linear mapping from $\Phi$ into $C_{T}(\R)$ (respectively $D_{T}(\R)$) given by $\psi \mapsto \{ \widehat{X}_{t}(\phi) \}_{t \in [0,T]}$ is continuous. We can use this result to prove Lemma \ref{lemmPnContiCharacFunctSupremum}. Then, the same arguments used in the remain of the proof of Proposition \ref{propRegulTheoremCadlagContinuousVersionInBoundedInterval} show that the conclusions of Proposition \ref{propRegulTheoremCadlagContinuousVersionInBoundedInterval} are still valid in our present context. Therefore, the proof of Theorem \ref{theoRegularizationTheoremCadlagContinuousVersion} can be replicated without any change and showing the existence of the countably Hilbertian topology $\theta_{X}$ and the $(\widetilde{\Phi_{\vartheta_{X}}})'_{\beta}$-valued process $Y$ satisfying the conclusions of the corollary. 
\end{prf}

From the above result we obtain the following version of the regularization theorem for stochastic processes taking values in the dual of an ultrabornological nuclear space.

\begin{coro} \label{coroRegulTheoUltrabornologicalSpaceStochastProcess}
Let $\Phi$ be an ultrabornological nuclear space. Let $X=\{X_{t} \}_{t \geq 0}$ be a $\Phi'_{\beta}$-valued process satisfying:
\begin{enumerate}
\item For each $\phi \in \Phi$, the real-valued process $X[\phi]=\{ X_{t}[\phi] \}_{t \geq 0}$ has a continuous (respectively c\`{a}dl\`{a}g) version.
\item For every $t \geq 0$, the distribution $\mu_{X_{t}}$ of $X_{t}$ is a Radon probability measure.
\end{enumerate}
Then, there exists a countably Hilbertian topology $\vartheta_{X}$ on $\Phi$ 
and a $(\widetilde{\Phi_{\vartheta_{X}}})'_{\beta}$-valued continuous (respectively c\`{a}dl\`{a}g) process $Y= \{ Y_{t} \}_{t \geq 0}$, such that for every $\phi \in \Phi$, $Y[\phi]= \{ Y_{t}[\phi] \}_{t \geq 0}$ is a version of $X[\phi]= \{ X_{t}[\phi] \}_{t \geq 0}$. Moreover, $Y$ is a $\Phi'_{\beta}$-valued, regular, continuous (respectively c\`{a}dl\`{a}g) version of $X$ that is unique up to indistinguishable versions. 
\end{coro}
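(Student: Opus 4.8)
The plan is to deduce this statement directly from Corollary \ref{coroRegulTheoUltrabornologicalSpace}, since a $\Phi'_{\beta}$-valued process automatically determines a cylindrical process in $\Phi'$ via $\phi \mapsto \{ X_{t}[\phi] \}_{t \geq 0}$. The strategy is therefore to check that this induced cylindrical process satisfies the two hypotheses of Corollary \ref{coroRegulTheoUltrabornologicalSpace}, apply that corollary to obtain the regular continuous (respectively c\`{a}dl\`{a}g) version $Y$, and finally upgrade the conclusion from ``$Y$ is a version of the cylindrical process'' to ``$Y$ is a version of the $\Phi'_{\beta}$-valued process $X$''. Hypothesis (1) of Corollary \ref{coroRegulTheoUltrabornologicalSpace} is literally hypothesis (1) here, so the only real work is to convert the Radon condition (2) into the continuity condition required there.

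The key step is the treatment of hypothesis (2). Here I would use the fact that every ultrabornological space is barrelled (being a locally convex inductive limit of Banach spaces, each of which is barrelled). Consequently $\Phi$ is barrelled, and I may invoke Theorem \ref{theoCharacterizationRegularRV}: for each fixed $t \geq 0$, the assumption that $\mu_{X_{t}}$ is a Radon probability measure is statement (3) of that theorem, which for barrelled $\Phi$ gives the implication $(3) \Rightarrow (1)$, so $X_{t}$ is a regular random variable, and then $(1) \Rightarrow (2)$ shows that the map $X_{t} : \Phi \rightarrow L^{0}\ProbSpace$, $\phi \mapsto X_{t}[\phi]$, is continuous. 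This simultaneously delivers two things: first, that the induced cylindrical process satisfies hypothesis (2) of Corollary \ref{coroRegulTheoUltrabornologicalSpace} (continuity of $X_{t}$ for every $t \geq 0$), and second, that $X$ is itself a regular $\Phi'_{\beta}$-valued process (each $X_{t}$ being regular), a fact I will need at the end.

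With both hypotheses verified, Corollary \ref{coroRegulTheoUltrabornologicalSpace} supplies a countably Hilbertian topology $\vartheta_{X}$ on $\Phi$ and a $(\widetilde{\Phi_{\vartheta_{X}}})'_{\beta}$-valued continuous (respectively c\`{a}dl\`{a}g) process $Y = \{ Y_{t} \}_{t \geq 0}$, regular and $\Phi'_{\beta}$-valued, with $Y[\phi]$ a version of $X[\phi]$ for every $\phi \in \Phi$, and unique up to indistinguishability. It remains to observe that $Y$ is a version of $X$ \emph{as a $\Phi'_{\beta}$-valued process}. Since $Y[\phi]$ is a version of $X[\phi]$ for each $\phi$, and both $X$ and $Y$ are regular $\Phi'_{\beta}$-valued processes (the former by the previous paragraph, the latter by Corollary \ref{coroRegulTheoUltrabornologicalSpace}), Proposition \ref{propCondiIndistingProcess} applies and yields that $Y$ is a $\Phi'_{\beta}$-valued version of $X$; at the level of individual random variables this is just Proposition \ref{propCriteriaVersionRegularRV}. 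The uniqueness up to indistinguishability is inherited verbatim from Corollary \ref{coroRegulTheoUltrabornologicalSpace}.

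I do not expect a serious obstacle: the entire argument is a reduction. The one point that must not be overlooked is the barrelledness of $\Phi$, which is exactly what licenses the implication $(3) \Rightarrow (1)$ in Theorem \ref{theoCharacterizationRegularRV} and hence the passage from the Radon hypothesis to the continuity hypothesis needed by Corollary \ref{coroRegulTheoUltrabornologicalSpace}; the remaining identifications (version of the cylindrical process versus version of the $\Phi'_{\beta}$-valued process) are routine given the regularity of both processes.
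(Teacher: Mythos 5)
Your proposal is correct and follows essentially the same route as the paper: ultrabornological implies barrelled, so Theorem \ref{theoCharacterizationRegularRV} converts the Radon hypothesis into continuity of each $X_{t}:\Phi \rightarrow L^{0}\ProbSpace$, and then Corollary \ref{coroRegulTheoUltrabornologicalSpace} applies. Your additional step of upgrading ``version of the cylindrical process'' to ``version of the $\Phi'_{\beta}$-valued process $X$'' via regularity of each $X_{t}$ and Proposition \ref{propCriteriaVersionRegularRV} is a careful elaboration of a point the paper leaves implicit, not a different argument.
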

\begin{prf}
First, because the space $\Phi$ is ultrabornological it is also barreled (see \cite{NariciBeckenstein}, p.449). Hence, it follows from the assumption \emph{(2)} in the Corollary and from Theorem \ref{theoCharacterizationRegularRV} that for each $t \geq 0$ the map $X_{t}:\Phi \rightarrow L^{0} \ProbSpace$, $\phi \mapsto X_{t}[\phi]$ is continuous. Then, the existence of the countably Hilbertian topology $\theta_{X}$ and the $(\widetilde{\Phi_{\vartheta_{X}}})'_{\beta}$-valued process $Y$ satisfying the conclusions of the corollary follows from Corollary \ref{coroRegulTheoUltrabornologicalSpace}.  
\end{prf}

We finish this section with some comparisons of our results with those obtained by other authors. 

First, it is clear that Theorem \ref{theoRegularizationTheoremCadlagContinuousVersion} is a generalization of the regularization theorem of It\^{o} and Nawata (Theorem \ref{regularizationTheorem}).  

Now, if $\Phi$ is a nuclear space that is also a Fr\'{e}chet space or the countable inductive limit of Fr\'{e}chet spaces, then $\Phi$ is an ultrabornological space (see \cite{Jarchow}, Corollaries 4 and 5, Section 13.1, p.273). In this case if $X=\{X_{t} \}_{t \geq 0}$ is a $\Phi'_{\beta}$-valued process then for every $t \geq 0$ we have that $\mu_{X_{t}}$ is a Radon probability measure (see Section \ref{subSectionCylAndStocProcess}). Then, if for each $\phi \in \Phi$, the real-valued process $X[\phi]=\{ X_{t}[\phi] \}_{t \geq 0}$ has a continuous (respectively c\`{a}dl\`{a}g) version, then Corollary \ref{coroRegulTheoUltrabornologicalSpaceStochastProcess} shows that $X$ has a $\Phi'_{\beta}$-valued continuous (respectively c\`{a}dl\`{a}g) version. Therefore, Corollary \ref{coroRegulTheoUltrabornologicalSpaceStochastProcess} (and consequently Theorem \ref{theoRegularizationTheoremCadlagContinuousVersion}) generalizes the results obtained by Mitoma \cite{Mitoma:1983}, Fouque \cite{Fouque:1984} and Fernique \cite{Fernique:1989}. 

On the other hand, Martias \cite{Martias:1988} showed a version of the regularization theorem under the assumptions that $\Phi$ is a separable nuclear space and that $X$ is of the form $X:[0,T] \times \Omega \rightarrow \Phi'$, such that for each $\phi \in \Phi$, $\{ X_{t}[\phi] \}_{t \in [0,T]}$ is a real-valued process with a continuous (respectively c\`{a}dl\`{a}g) version and also assuming that the map $\phi \mapsto \{ X_{t}[\phi] \}_{t \in [0,T]}$ from $\Phi$ into $C_{T}(\R)$ (respectively $D_{T}(\R)$) is continuous. Note that the assumptions on $\Phi$ and $X$ in Theorem \ref{theoRegularizationTheoremCadlagContinuousVersion} are weaker than the assumptions used by Martias. In particular, one can easily check that the continuity of the map $\phi \mapsto \{ X_{t}[\phi] \}_{t \in [0,T]}$ implies assumption \emph{(2)} in Theorem \ref{theoRegularizationTheoremCadlagContinuousVersion}. Hence, Theorem \ref{theoRegularizationTheoremCadlagContinuousVersion} constitutes a generalization of the result obtained by Martias.

\section{Existence of Continuous and C\`{a}dl\`{a}g Versions in a Hilbert Space Continuously Embedded in $\Phi'_{\beta}$}\label{sectionContCadVersionHilbertSpace}

In this section we introduce conditions for the existence of continuous and c\`{a}dl\`{a}g versions taking values in a Hilbert space $\Phi'_{\varrho}$, where $\varrho$ is a continuous Hilbertian semi-norm on $\Phi$. We start with the following result that specializes the Regularization Theorem. 
 
\begin{theo} \label{theoRegularTheoHilbertSpaceCadlagContinuousVersion}
Let $X=\{X_{t} \}_{t \geq 0}$ be a cylindrical process in $\Phi'$ satisfying:
\begin{enumerate}
\item For each $\phi \in \Phi$, the real-valued process $X(\phi)=\{ X_{t}(\phi) \}_{t \geq 0}$ has a continuous (respectively c\`{a}dl\`{a}g) version.
\item There exists a continuous Hilbertian semi-norm $p$ on $\Phi$ such that for every $t \geq 0$, $X_{t}:\Phi \rightarrow L^{0} \ProbSpace$ is $p$-continuous.  
\end{enumerate}
Then, there exists a continuous Hilbertian semi-norm $\varrho$ on $\Phi$, $p \leq \varrho$, such that $i_{p,\varrho}$ is Hilbert-Schmidt and a $\Phi'_{\varrho}$-valued continuous (respectively c\`{a}dl\`{a}g) process $Y= \{ Y_{t} \}_{t \geq 0}$, such that for every $\phi \in \Phi$, $Y[\phi]= \{ Y_{t}[\phi] \}_{t \geq 0}$ is a version of $X(\phi)= \{ X_{t}(\phi) \}_{t \geq 0}$. Moreover, $Y$ is unique up to indistinguishable versions in $\Phi'_{\beta}$. 
\end{theo}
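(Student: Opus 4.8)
The plan is to re-run the construction behind Proposition~\ref{propRegulTheoremCadlagContinuousVersionInBoundedInterval} and Lemma~\ref{lemmaRegulaTheo}, exploiting the fact that now a \emph{single} continuous Hilbertian semi-norm $p$ controls every $X_{t}$, so that the whole argument can be carried out with one Hilbert space in place of a countably Hilbertian topology. Fix $T>0$ and, using assumption~\emph{(1)}, for each $\phi \in \Phi$ choose a continuous (respectively c\`{a}dl\`{a}g) version $\widehat{X}(\phi)=\{\widehat{X}_{t}(\phi)\}_{t \in [0,T]}$, determining a cylindrical process $\widehat{X}$ as in Section~\ref{sectionRegulTheorem}. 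The first task is to prove, as the analogue of Lemmas~\ref{lemmContiXtOnCHT}--\ref{lemmContiXAsMapToSpaceContiProcesses}, that $\phi \mapsto \widehat{X}(\phi)$ is \emph{$p$-continuous} as a map from $\Phi$ into $C_{T}(\R)$ (respectively $D_{T}(\R)$). Granting this, the argument of Lemma~\ref{lemmPnContiCharacFunctSupremum} gives, for each $\epsilon>0$ and each countable dense $D \subseteq [0,T]$, a constant $C_{\epsilon}>0$ depending only on $\epsilon$ and $p$ with
\[
\Exp\left( \sup_{t \in D} \abs{1-e^{i \widehat{X}_{t}(\phi)}} \right) \leq \epsilon + 2\, C_{\epsilon}^{2}\, p(\phi)^{2}, \qquad \forall \, \phi \in \Phi ;
\]
the semi-norm on the right is now a fixed multiple of $p$, the only cost being that $C_{\epsilon}\to\infty$ as $\epsilon\to0$.

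Using nuclearity I would then fix continuous Hilbertian semi-norms $p \leq q \leq \varrho$ such that $i_{p,q}$ and $i_{q,\varrho}$ are Hilbert--Schmidt, so that $i_{p,\varrho}=i_{p,q}\circ i_{q,\varrho}$ is Hilbert--Schmidt as well. Taking a complete orthonormal system $\{\phi_{j}^{q}\}_{j \in \N}\subseteq\Phi$ of $\Phi_{q}$ and repeating the Gaussian-integration estimate of Lemma~\ref{lemmaRegulaTheo} with the single semi-norm $C_{\epsilon}p$ in the role of $p_{n}$, one obtains, for every $C>0$,
\[
\Prob\left( \sup_{t \in D} \sum_{j=1}^{\infty} \abs{\widehat{X}_{t}(\phi_{j}^{q})}^{2} > C^{2} \right) \leq \frac{\sqrt{e}}{\sqrt{e}-1}\left( \epsilon + \frac{2\, C_{\epsilon}^{2}}{C^{2}}\, \norm{i_{p,q}}^{2}_{\mathcal{L}_{2}(\Phi_{q},\Phi_{p})} \right) .
\]
This is the decisive point, and where the single-semi-norm hypothesis pays off: for fixed $\epsilon$, letting $C \to \infty$ annihilates the second term, whence $\Prob( \sup_{t \in D} \sum_{j} \abs{\widehat{X}_{t}(\phi_{j}^{q})}^{2} = \infty ) \leq \frac{\sqrt{e}}{\sqrt{e}-1}\epsilon$; since the event on the left does not depend on $\epsilon$, letting $\epsilon \to 0$ forces this probability to be $0$. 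Thus $\sup_{t \in D} \sum_{j} \abs{\widehat{X}_{t}(\phi_{j}^{q})}^{2} < \infty$ almost surely, with no recourse to the Borel--Cantelli step and the summable sequence $\{\epsilon_{n}\}$ needed in Lemma~\ref{lemmaRegulaTheo}.

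On the resulting full-measure set I would set $Y^{(q)}_{t}=\sum_{j}\widehat{X}_{t}(\phi_{j}^{q})f_{j}^{q}$, a $\Phi'_{q}$-valued process, with $\{f_{j}^{q}\}$ the dual system to $\{\phi_{j}^{q}\}$, and define $Y_{t}=i'_{q,\varrho}Y^{(q)}_{t}$, a $\Phi'_{\varrho}$-valued process. Exactly as in Proposition~\ref{propRegulTheoremCadlagContinuousVersionInBoundedInterval}, the Hilbert--Schmidt bound $\sum_{j}\sup_{t}\abs{i'_{q,\varrho}Y^{(q)}_{t}[\phi_{j}^{\varrho}]}^{2} \leq (\sup_{t} q'(Y^{(q)}_{t})^{2})\,\norm{i_{q,\varrho}}^{2}_{\mathcal{L}_{2}(\Phi_{\varrho},\Phi_{q})}<\infty$, together with Parseval's identity and dominated convergence, yields continuity (respectively the c\`{a}dl\`{a}g property) of $t \mapsto Y_{t}$ in $\Phi'_{\varrho}$. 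Because $q$ and $\varrho$ were chosen to depend only on $p$ and not on $T$, the same $\varrho$ works on every $[0,T]$; running the argument for $T=k\in\N$ produces a single $\Phi'_{\varrho}$-valued continuous (respectively c\`{a}dl\`{a}g) process on $[0,\infty)$, avoiding the interval-patching of the proof of Theorem~\ref{theoRegularizationTheoremCadlagContinuousVersion}. That $Y[\phi]$ is a version of $X(\phi)$ follows from the analogue of Lemma~\ref{lemmaRegulaTheo}\emph{(3)} together with the choice of $\widehat{X}$, and since $Y$ is $\Phi'_{\varrho}$-valued it is regular; uniqueness up to indistinguishability in $\Phi'_{\beta}$ then follows from Proposition~\ref{propCondiIndistingProcess}.

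The step I expect to be the main obstacle is the very first one: upgrading the merely pointwise-in-$t$ hypothesis~\emph{(2)} to the uniform-in-$t$ estimate, i.e. the $p$-continuity of $\phi \mapsto \widehat{X}(\phi)$ into $C_{T}(\R)$. Since each $X_{t}$ is $p$-continuous it factors through the Hilbert space $\Phi_{p}$, which is a Baire space, and $C_{T}(\R)$ is a complete metrizable topological vector space, so the closed graph theorem is the natural tool, as in Lemma~\ref{lemmContiXAsMapToSpaceContiProcesses}. The single-valuedness of the candidate map is immediate from the continuity of each $\widetilde{X}_{t}\colon\Phi_{p}\to L^{0}\ProbSpace$, but its totality---that $\{\widetilde{X}_{t}(\phi)\}_{t}$ admits a continuous (respectively c\`{a}dl\`{a}g) version for \emph{every} $\phi\in\Phi_{p}$ and not merely for $\phi\in i_{p}(\Phi)$---is the delicate part, and is exactly where the completeness of $\Phi_{p}$ and the $C_{T}(\R)$-valued closed graph theorem must be invoked with care.
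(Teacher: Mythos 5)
Your proposal is correct and follows essentially the same route as the paper's own proof: extend each $\widehat{X}_{t}$ to $\Phi_{p}$ and invoke the closed graph theorem (as in Lemma \ref{lemmContiXAsMapToSpaceContiProcesses}) to get $p$-continuity of $\phi \mapsto \widehat{X}(\phi)$ into $C_{T}(\R)$, re-run the Gaussian/orthonormal-system construction of Lemma \ref{lemmaRegulaTheo} and Proposition \ref{propRegulTheoremCadlagContinuousVersionInBoundedInterval} with a single chain $p \leq q \leq \varrho$ of Hilbert--Schmidt inclusions, and settle uniqueness via Proposition \ref{propCondiIndistingProcess}. Your deviations are cosmetic rather than substantive: the direct $C \to \infty$, $\epsilon \to 0$ limit replaces a Borel--Cantelli step that becomes vacuous anyway once all $p_{n}$, $q_{n}$ coincide; the claim of avoiding interval-patching is slightly overstated (the processes built on $[0,k]$ for different $k$ must still be glued through indistinguishability, exactly as the paper does---the genuine gain, which you correctly identify, is that $\varrho$ is independent of $T$ so the gluing happens inside the fixed space $\Phi'_{\varrho}$); and the ``totality'' subtlety you flag at the closed-graph step (that $\{\widetilde{X}_{t}(\phi)\}_{t}$ must lie in $C_{T}(\R)$ for every $\phi$ in the completed domain, not just for $\phi$ coming from $\Phi$) is precisely the point the paper itself passes over tacitly in Lemma \ref{lemmContiXAsMapToSpaceContiProcesses}, so your treatment is no less rigorous than the original.
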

\begin{prf}
Let $T>0$. With the same terminology as in the proof of Proposition \ref{propRegulTheoremCadlagContinuousVersionInBoundedInterval}, the assumption \emph{(2)} of the Theorem implies that for every $t \in [0,T]$ the map $\widehat{X}_{t}: \Phi \rightarrow L^{0} \ProbSpace$ is $p$-continuous. Then, for all $t \in [0,T]$ the map $\widehat{X}_{t}$ has a continuous and linear extension $\widetilde{X}_{t}:\Phi_{p} \rightarrow L^{0} \ProbSpace$. Hence, from similar arguments to those in the proof of Lemma \ref{lemmContiXAsMapToSpaceContiProcesses} we can show that the linear mapping $\widetilde{X}:\Phi_{p} \rightarrow C_{T}(\R)$, $\phi \mapsto \widetilde{X}(\phi)=\{ \widetilde{X}_{t}(\phi) \}_{t \in [0,T]}$ is continuous. This in turns implies that the map $\widehat{X}:\Phi \rightarrow C_{T}(\R)$, $\phi \mapsto \widehat{X}(\phi)=\{ \widehat{X}_{t}(\phi) \}_{t \in [0,T]}$ is $p$-continuous. 
Hence, in the proof of Lemma \ref{lemmaRegulaTheo}, we have that \eqref{continuityOrigenCHTRegularVersionOperator} is satisfied for $p_{n}=p$, for all $n \in \N$. In that case, we have in \eqref{defiSetOmegaNRegulaTheo}, \eqref{defiSetGammaNRegulTheo} and \eqref{defiSetANRegulTheo} that $q_{n}=q$ for some continuous Hilbertian semi-norm $q$ on $\Phi$, $p \leq q$, and such that $i_{p,q}$ is Hilbert-Schmidt. Therefore, we have from \eqref{defiSetLambdaNRegulTheo} and \eqref{defiVersionYnRegularTheorem} that $\Lambda_{n}=\Lambda_{m}$ and $Y^{(n)}=Y^{(m)}$ for all $m,n \in \N$. If we choose $\varrho$, $q \leq \varrho$, such that $i_{q,\varrho}$ is Hilbert-Schmidt, and if in the proof of Proposition \ref{propRegulTheoremCadlagContinuousVersionInBoundedInterval} we take $\varrho_{n}=\varrho$ for all $n \in \N$, then $Y=\{Y_{t}\}_{t \in [0,T]}$ defined by \eqref{defiContVersionY} is a $\Phi'_{\varrho}$-valued continuous processes such that for every $\phi \in \Phi$, $Y[\phi]=\{Y_{t}[\phi]\}_{t \in [0,T]}$ is a version of $X(\phi)=\{X_{t}(\phi)\}_{t \in [0,T]}$. 

Let $\{ T_{k} \}_{k \in \N}$ an increasing sequence of positive numbers such that $\lim_{k \rightarrow \infty} T_{k}= \infty$. Then, as proved on the above paragraph for each $k \in \N$ there exists a $\Phi'_{\varrho}$-valued continuous process $Y^{(k)}= \{ Y^{(k)}_{t} \}_{t \in [0,T_{k}]}$, such that for every $\phi \in \Phi$,  $\{ Y^{(k)}_{t}[\phi] \}_{t \in [0,T_{k}]}$ is a version of $\{ X_{t}(\phi)\}_{t \in [0,T_{k}]}$.
 
Take $Y=\{ Y_{t} \}_{t \geq 0}$ defined by the prescription $Y_{t}=Y^{(k)}_{t}$ if $t \in [0,T_{k}]$. Note that $Y$ is a well-defined $\Phi'_{\varrho}$-valued process. To prove this, observe that for each each $k \in \N$, $Y^{(k)}$ and $Y^{(k+1)}$ are continuous processes such that for every $\phi \in \Phi$, $Y^{(k)}_{t}[\phi]=X_{t}(\phi)=Y^{(k+1)}_{t}[\phi]$ $\Prob$-a.e. for all $t \in [0,T_{k}]$, then Proposition \ref{propCondiIndistingProcess} shows that $\{ Y^{(k)}_{t} \}_{ t \in [0,T_{k}]}$ and $\{ Y^{(k+1)}_{t} \}_{ t \in [0,T_{k}]}$ are indistinguishable processes. Therefore, $Y$ is a $\Phi'_{\varrho}$-valued continuous process such that for every $\phi \in \Phi$, $Y[\phi]$ is a version of $X(\phi)$. Moreover, from Proposition \ref{propCondiIndistingProcess} again it follows that $Y$ is unique up to indistinguishable versions in $\Phi'_{\beta}$. 
\end{prf}

Now we consider conditions for the existence of continuous or c\`{a}dl\`{a}g version such that in a given bounded interval of time they take values and have uniform finite moments in some Hilbert space $\Phi'_{\varrho}$. For the proof we will need the following terminology. For $n \in \N$, we denote by $C^{n}_{T}(\R)$ (respectively $D^{n}_{T}(\R)$) the linear space of all the continuous (respectively c\`{a}dl\`{a}g) processes satisfying $\Exp  \sup_{t \in [0,T]} \abs{Z_{t}}^{n}< \infty$. It is a Banach space equipped with the norm $\norm{Z}_{n,T}= \left( \Exp  \sup_{t \in [0,T]} \abs{Z_{t}}^{n} \right)^{1/n}$.   

\begin{theo} \label{theoExistenceCadlagContVersionHilbertSpaceFiniteMoments}
Let $X=\{X_{t} \}_{t \geq 0}$ be a cylindrical process in $\Phi'$ satisfying:
\begin{enumerate}
\item For each $\phi \in \Phi$, the real-valued process $X(\phi)=\{ X_{t}(\phi) \}_{t \geq 0}$ has a continuous (respectively c\`{a}dl\`{a}g) version.
\item For every $T > 0$, the family $\{ X_{t}: t \in [0,T] \}$ of linear maps from $\Phi$ into $L^{0} \ProbSpace$ is equicontinuous.  
\item There exists $n \in \N$ such that $\forall \, T>0$, $\Exp \left( \sup_{t \in [0,T]} \abs{X_{t}[\phi]}^{n} \right) < \infty$, $\forall \, \phi \in \Phi$. 
\end{enumerate}
Then, there exists a countably Hilbertian topology $\vartheta_{X}$ on $\Phi$ determined by an increasing sequence $\{ q_{k} \}_{k \in \N}$ of continuous Hilbertian semi-norms on $\Phi$ and a $(\widetilde{\Phi_{\vartheta_{X}}})'_{\beta}$-valued continuous (respectively c\`{a}dl\`{a}g) process $Y= \{ Y_{t} \}_{t \geq 0}$, satisfying:
\begin{enumerate}[label=(\alph*)]
\item For every $\phi \in \Phi$, $Y[\phi]= \{ Y_{t}[\phi] \}_{t \geq 0}$ is a version of $X(\phi)= \{ X_{t}(\phi) \}_{t \geq 0}$, 
\item For every $T>0$, there exists $k \in \N$ such that $\{ Y_{t} \}_{t \in [0,T]}$ is a $\Phi'_{q_{k}}$-valued continuous (respectively c\`{a}dl\`{a}g) process satisfying $\Exp \left( \sup_{t \in [0,T]} q_{k}'(Y_{t})^{n} \right) < \infty$.  
\end{enumerate} 
Furthermore, $Y$ is a $\Phi'_{\beta}$-valued, regular, continuous (respectively c\`{a}dl\`{a}g) version of $X$ that is unique up to indistinguishable versions.
\end{theo}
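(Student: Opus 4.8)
\emph{Strategy.} The plan is to re-run the construction behind the Regularization Theorem (Theorem~\ref{theoRegularizationTheoremCadlagContinuousVersion}), carried out in Proposition~\ref{propRegulTheoremCadlagContinuousVersionInBoundedInterval} and Lemma~\ref{lemmaRegulaTheo}, while keeping track of $n$-th moments. As there, I would first fix $T>0$, work on $[0,T]$ with the continuous (resp. \cadlag) versions $\widehat{X}(\phi)$ of $X(\phi)$, and only at the end patch the bounded-interval versions over an exhausting sequence $T_{k}\uparrow\infty$, appealing to Proposition~\ref{propCondiIndistingProcess} and to Theorem~\ref{theoRegularizationTheoremCadlagContinuousVersion} for the global regular, continuous (resp. \cadlag) version and for uniqueness. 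The single new analytic input is an $L^{n}$-strengthening of Lemma~\ref{lemmContiXAsMapToSpaceContiProcesses}: I claim there is a continuous Hilbertian semi-norm $p$ on $\Phi$ with
\begin{equation*}
\Exp\left(\sup_{t\in[0,T]}\abs{\widehat{X}_{t}(\phi)}^{n}\right)\le p(\phi)^{n},\qquad\forall\,\phi\in\Phi,\tag{$\star$}
\end{equation*}
that is, $\phi\mapsto\{\widehat{X}_{t}(\phi)\}_{t\in[0,T]}$ is continuous from $\Phi$ into $C^{n}_{T}(\R)$ (resp. $D^{n}_{T}(\R)$).

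\emph{Proving $(\star)$.} Assumption~(3) guarantees that for each $\phi\in\Phi$ the version $\widehat{X}(\phi)$ already lies in the Banach space $C^{n}_{T}(\R)$ (resp. $D^{n}_{T}(\R)$), so $\widehat{X}$ is a well-defined linear map into it. To obtain the uniform bound I would reproduce the closed graph argument of Lemma~\ref{lemmContiXAsMapToSpaceContiProcesses}, only replacing the target $C_{T}(\R)$ by $C^{n}_{T}(\R)$: using the weaker countably Hilbertian topology $\theta$ supplied by the equicontinuity assumption~(2) (Lemma~\ref{lemmContiXtOnCHT}), each $\widehat{X}_{t}$ extends to $\widetilde{X}_{t}\colon\widetilde{\Phi_{\theta}}\to L^{0}\ProbSpace$, the induced map $\widetilde{X}\colon\widetilde{\Phi_{\theta}}\to C^{n}_{T}(\R)$ has sequentially closed graph by uniqueness of limits in $L^{0}\ProbSpace$ (noting that $C^{n}_{T}$-convergence forces uniform convergence in probability), and the closed graph theorem applies because $\widetilde{\Phi_{\theta}}$ is a Baire space (Proposition~\ref{propWeakerCountablyHilbertianTopology}) and $C^{n}_{T}(\R)$ is a Banach space. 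Continuity on $\Phi$ then yields $(\star)$ after rescaling and dominating by a continuous Hilbertian semi-norm $p$.

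\emph{From $(\star)$ to a Hilbert-space moment bound.} Fix $q\ge p$ with $i_{p,q}$ Hilbert--Schmidt and a complete orthonormal system $\{\phi_{j}^{q}\}_{j\in\N}\subseteq\Phi$ of $\Phi_{q}$. For $z\in\R^{m}$, apply $(\star)$ to $\sum_{j=1}^{m}z_{j}\phi_{j}^{q}\in\Phi$ and integrate against the standard Gaussian measure $\gamma_{m}$ on $\R^{m}$. Using $\int_{\R^{m}}\abs{\sum_{j}z_{j}a_{j}}^{n}\gamma_{m}(dz)=c_{n}(\sum_{j}a_{j}^{2})^{n/2}$ with $c_{n}=\Exp\abs{Z}^{n}$ ($Z\sim N(0,1)$), the Fubini theorem, and the elementary inequality $\sup_{t}\int\le\int\sup_{t}$, I obtain for $D$ a countable dense subset of $[0,T]$ and i.i.d. $\{Z_{j}\}\sim N(0,1)$:
\begin{equation*}
c_{n}\,\Exp\sup_{t\in D}\left(\sum_{j=1}^{m}\abs{\widehat{X}_{t}(\phi_{j}^{q})}^{2}\right)^{n/2}\le\int_{\R^{m}}p\!\left(\sum_{j=1}^{m}z_{j}\phi_{j}^{q}\right)^{n}\gamma_{m}(dz)=\Exp\,\norm{\,\textstyle\sum_{j=1}^{m}Z_{j}\,i_{p,q}\phi_{j}^{q}}_{\Phi_{p}}^{n}.
\end{equation*}
The right-hand side is the $n$-th moment of a centred Gaussian vector in $\Phi_{p}$ of expected squared norm $\sum_{j=1}^{m}p(\phi_{j}^{q})^{2}\le\norm{i_{p,q}}_{\mathcal{L}_{2}(\Phi_{q},\Phi_{p})}^{2}$; by the equivalence of Gaussian moments in a Hilbert space it is at most $\kappa_{n}\norm{i_{p,q}}_{\mathcal{L}_{2}(\Phi_{q},\Phi_{p})}^{n}$ with $\kappa_{n}$ depending only on $n$, hence uniformly in $m$. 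Letting $m\to\infty$ by monotone convergence gives $\Exp\sup_{t\in D}(\sum_{j}\abs{\widehat{X}_{t}(\phi_{j}^{q})}^{2})^{n/2}<\infty$, which is precisely $\Exp\sup_{t\in[0,T]}q'(Y_{t})^{n}<\infty$ for the version built as in \eqref{defiVersionYnRegularTheorem}--\eqref{versionYnRegularTheoremWellDefined}, since its dual norm satisfies $q'(Y_{t})^{2}=\sum_{j}\abs{\widehat{X}_{t}(\phi_{j}^{q})}^{2}$ and $\sup_{t\in D}=\sup_{t\in[0,T]}$ by path regularity.

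\emph{Assembling and the main difficulty.} With this estimate in hand, I would feed $p$, $q$ (and a further $\varrho\ge q$ with $i_{q,\varrho}$ Hilbert--Schmidt) into Lemma~\ref{lemmaRegulaTheo} and Proposition~\ref{propRegulTheoremCadlagContinuousVersionInBoundedInterval}; the resulting $\Phi'_{\varrho}$-valued process inherits the finite moment because $\varrho'(i'_{q,\varrho}f)\le\norm{i_{q,\varrho}}_{\mathcal{L}(\Phi_{\varrho},\Phi_{q})}q'(f)$. Collecting the semi-norms over $T_{k}\uparrow\infty$ produces the topology $\vartheta_{X}$ and the sequence $\{q_{k}\}_{k\in\N}$ of the statement, property~(a) follows exactly as in Theorem~\ref{theoRegularizationTheoremCadlagContinuousVersion}, and property~(b) is the moment bound just established. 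I expect the main obstacle to be the proof of $(\star)$: the closed graph theorem forces one to exhibit $\widehat{X}$ as an \emph{everywhere-defined} map into the moment space $C^{n}_{T}(\R)$ over a Baire domain, so the delicate point is that passing to the weaker topology $\theta$ and its completion be compatible with the $n$-th moment finiteness coming from assumption~(3); once $(\star)$ is secured, the Gaussian integration and the dimension-free Gaussian moment comparison are routine.
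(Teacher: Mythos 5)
Your proposal is correct, and while its overall skeleton matches the paper's (the closed-graph argument giving continuity of $\phi\mapsto\{X_{t}(\phi)\}_{t\in[0,T]}$ into $C^{n}_{T}(\R)$ --- your $(\star)$ is precisely the paper's continuous semi-norm $p(\phi)=\norm{X(\phi)}_{n,T}$ dominated by a continuous Hilbertian semi-norm --- followed by the same patching over $T_{k}\uparrow\infty$ via Proposition \ref{propCondiIndistingProcess}), the decisive middle step is genuinely different. The paper first applies Theorem \ref{theoRegularTheoHilbertSpaceCadlagContinuousVersion} to obtain a $\Phi'_{r}$-valued continuous version $\widehat{Y}$, and then gets the moment bound softly: $i'_{r,q}$ is Hilbert--Schmidt, hence $n$-summing, and the Pietsch domination theorem of \cite{DiestelJarchowTonge} gives $q'(i'_{r,q}f)\le C \left( \int_{B^{*}_{r}(1)}\abs{f[\phi]}^{n}\nu(d\phi) \right)^{1/n}$, whence $\Exp \sup_{t\in[0,T]} q'(i'_{r,q}\widehat{Y}_{t})^{n}\le C^{n}\norm{\widehat{Y}}^{n}_{\mathcal{L}(\Phi_{r},C^{n}_{T}(\R))}<\infty$. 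You instead integrate $(\star)$ against Gaussian measures in the coordinates of an orthonormal basis of $\Phi_{q}$ and invoke the dimension-free comparison of Gaussian moments in the Hilbert space $\Phi_{p}$; this upgrades the very technique of Lemma \ref{lemmaRegulaTheo} (which there yields only tail estimates) to $n$-th moments, so that the a.s.\ finiteness of $\sup_{t}\sum_{j}\abs{\widehat{X}_{t}(\phi_{j}^{q})}^{2}$ needed for the construction and the bound $\Exp\sup_{t}q'(Y_{t})^{n}\le(\kappa_{n}/c_{n})\norm{i_{p,q}}^{n}_{\mathcal{L}_{2}(\Phi_{q},\Phi_{p})}$ come out of a single estimate with an explicit constant, avoiding the summing-operator machinery entirely. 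Your computation checks out: the a.s.\ linearity of $\widehat{X}_{t}$ on each fixed finite linear combination together with path regularity, Fubini, $\int\sup\ge\sup\int$, the identity $\int\abs{\sum_{j}z_{j}a_{j}}^{n}\gamma_{m}(dz)=c_{n}(\sum_{j}a_{j}^{2})^{n/2}$, and Minkowski's inequality in $L^{n/2}$ for the Gaussian moment comparison are all valid, and the transfer $\varrho'(i'_{q,\varrho}f)\le\norm{i_{q,\varrho}}_{\mathcal{L}(\Phi_{\varrho},\Phi_{q})}q'(f)$ is correct. The price, which you correctly pay, is re-opening the constant-semi-norm construction of Lemma \ref{lemmaRegulaTheo} and Proposition \ref{propRegulTheoremCadlagContinuousVersionInBoundedInterval} rather than citing Theorem \ref{theoRegularTheoHilbertSpaceCadlagContinuousVersion} as a black box. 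Finally, the obstacle you flag in proving $(\star)$ --- that the closed graph theorem requires $\widetilde{X}$ to be everywhere defined on the Baire space $\widetilde{\Phi_{\theta}}$ with values in $C^{n}_{T}(\R)$ --- is real, but the paper's own arguments (Lemma \ref{lemmContiXAsMapToSpaceContiProcesses}, and Step 1 of this theorem's proof) pass over it at exactly the same level of detail, so your proposal is not weaker than the paper on this point.
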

\begin{prf} We prove the continuous case as the c\`{a}dl\`{a}g case follows from similar arguments. We do this in two steps. 

\textbf{Step 1} We are going to prove the following: \emph{For every $T>0$ there exists a continuous Hilbertian semi-norm $q$ on $\Phi$ and a $\Phi'_{q}$-valued continuous process $Y=\{ Y_{t} \}_{t \in [0,T]}$, such that for every $\phi \in \Phi$, $Y[\phi]=\{ Y_{t}[\phi] \}_{t \in [0,T]}$ is a version of $X(\phi)=\{ X_{t}(\phi) \}_{t \in [0,T]}$ and $\Exp \left( \sup_{t \in [0,T]} q'(Y_{t})^{n} \right)< \infty$. }

Let $T>0$ and define $p: \Phi \rightarrow [0,\infty)$ by 
\begin{equation} \label{defiSemiNormPUniformMoments}
p(\phi) = \left[ \Exp \left( \sup_{t \in [0,T]} \abs{X_{t}(\phi)}^{n} \right) \right]^{1/n}, \quad \forall \, \phi \in \Phi.
\end{equation}
We proceed to show that $p$ is a continuous semi-norm on $\Phi$. First, observe that the map from $\Phi$ into $C^{n}_{T}(\R)$ given by $\phi \mapsto X(\phi)=\{ X_{t}(\phi) \}_{t \in [0,T]}$ is linear. Then, because $\norm{ \cdot }_{n,T}$ its a norm on $C^{n}_{T}(\R)$ and $p(\phi)=\norm{X(\phi)}_{n,T}$ for all $\phi \in \Phi$, it follows that $p$ is a semi-norm.  

To prove that $p$ is continuous, one can use the closed graph theorem and similar arguments to those used in the proof of Lemma  \ref{lemmContiXAsMapToSpaceContiProcesses} to show that the map $\phi \mapsto X(\phi)$ from $\Phi$ into $C^{n}_{T}(\R)$ is continuous. But because $p(\phi)=\norm{X(\phi)}_{n,T}$, for all $\phi \in \Phi$, then the continuity of the maps $\phi \mapsto X(\phi)$ and $Z \mapsto \norm{Z}_{n,T}$ implies that $p$ is continuous. 

Now, from the Markov and Jensen inequalities, for any $\epsilon >0$ we have
$$ \Prob \left( \omega: \sup_{t \in [0,T]} \abs{X_{t}(\phi)(\omega)} > \epsilon \right) \leq \frac{1}{\epsilon}   
\left[ \Exp \left( \sup_{t \in [0,T]} \abs{X_{t}(\phi)}^{n} \right) \right]^{1/n} = \frac{1}{\epsilon} p(\phi), \quad \forall \phi \in \Phi.$$
Therefore, the map $X: \Phi \rightarrow C_{T}(\R)$ given by $ \psi \mapsto X(\phi)= \{ X_{t}(\phi) \}_{t \in [0,T]}$ is $p$-continuous. Now, as the topology on $\Phi$ is generated by a family of Hilbertian semi-norms and because the semi-norm $p$ is continuous, there exists a continuous Hilbertian semi-norm $\varrho$ on $\Phi$ such that $p \leq \varrho$. This implies that the map $X: \Phi \rightarrow C_{T}(\R)$ is $\varrho$-continuous. Then, it follows from Theorem  \ref{theoRegularTheoHilbertSpaceCadlagContinuousVersion} that there exists a continuous Hilbertian semi-norm $r$ on $\Phi$, $\varrho \leq r$ such that $i_{r,\varrho}$ is Hilbert-Schmidt and such that $X$ has a $\Phi'_{r}$-valued continuous version $\widehat{Y}= \{ \widehat{Y}_{t} \}_{t \in [0,T]}$.  

Let $q$ be a continuous Hilbertian semi-norm on $\Phi$ such that $r \leq q$ and $i_{r,q}$ is Hilbert-Schmidt. Then, the dual operator $i'_{r,q}:\Phi'_{r} \rightarrow \Phi'_{q}$ is Hilbert-Schmidt and hence is $n$-summing (see \cite{DiestelJarchowTonge},  Corollary 4.13, p.85). Then, from the Pietsch domination theorem (see \cite{DiestelJarchowTonge}, Theorem 2.12, p.44) there exists a constant $C>0$, and a Radon probability measure $\nu$ on the unit ball $B^{*}_{r}(1)$ of $\Phi_{r}$ (equipped with the weak topology) such that, 
\begin{equation}\label{integralInequalitySummingOperator}
q'(i'_{r,q} f ) \leq C \cdot \left( \int_{B^{*}_{r}(1)} \abs{f[\phi]}^{n} \nu(d\phi) \right)^{1/n}, \quad \forall \, f \in \Phi'_{r}.    
\end{equation}
As $\widehat{Y}$ is a $\Phi'_{r}$-valued continuous process, then $\phi \mapsto \widehat{Y}[\phi]=\{\widehat{Y}_{t}[\phi]\}_{t \in [0,T]}$ is a continuous and linear map from $\Phi_{r}$ into $C^{n}_{T}(\R)$. Therefore, it follows from \eqref{integralInequalitySummingOperator} that, 
\begin{eqnarray*}
\Exp \left( \sup_{t \in [0,T]}  q'(i'_{r,q} \widehat{Y}_{t})^{n} \right) & \leq & C^{n} \,  \Exp \sup_{t \in [0,T]}  \int_{B^{*}_{r}(1)} \abs{\widehat{Y}_{t}[\phi]}^{n} \nu(d\phi) \\
& \leq & C^{n}  \int_{B^{*}_{r}(1)} \norm{\widehat{Y}[\phi]}_{c,n,T}^{n} \, \nu(d\phi) \\
& \leq & C^{n} \norm{\widehat{Y}}_{\mathcal{L}(\Phi_{r},C^{n}_{T}(\R))}^{n} < \infty. 
\end{eqnarray*}
Hence, $Y=\{ Y_{t} \}_{t \in [0,T]}$, defined by $Y_{t}=i'_{r,q} \widehat{Y}_{t}$ for every $t \in [0,T]$, is a $\Phi'_{q}$-valued continuous process such that for every $\phi \in \Phi$, $Y[\phi]=\{ Y_{t}[\phi] \}_{t \in [0,T]}$ is a version of $X(\phi)=\{ X_{t}(\phi) \}_{t \in [0,T]}$ and such that $\Exp \left( \sup_{t \in [0,T]} q'(Y_{t})^{n} \right)< \infty$. 

\textbf{Step 2}
Let $\{ T_{k} \}_{k \in \N}$ an increasing sequence of positive numbers such that $\lim_{k \rightarrow \infty} T_{k}= \infty$. Then, it follows from Step 1 that for each $k \in \N$ there exists a continuous Hilbertian semi-norm $q_{k}$ on $\Phi$ and a $\Phi'_{q_{k}}$-valued continuous process $Y^{(k)}=\{ Y^{(k)}_{t} \}_{t \in [0,T_{k}]}$, such that for every $\phi \in \Phi$, $Y^{(k)}[\phi]=\{ Y^{(k)}_{t}[\phi] \}_{t \in [0,T_{k}]}$ is a version of $X(\phi)=\{ X_{t}(\phi) \}_{t \in [0,T_{k}]}$ and $\Exp \left( \sup_{t \in [0,T_{k}]} q'_{k}(Y^{(k)}_{t})^{n} \right)< \infty$. 

Without loss of generality we can assume that the sequence of semi-norms $\{ q_{k} \}_{k \in \N}$ is increasing. 
Let $\vartheta_{X}$ be the countably Hilbertian topology on $\Phi$ generated by the semi-norms $\{ q_{k} \}_{k \in \N}$. Take $Y=\{ Y_{t} \}_{t \geq 0}$ defined by the prescription $Y_{t}= Y^{(k)}_{t}$ if $t \in [0,T_{k}]$. In a similar way as we did in the proof of Theorem \ref{theoRegularizationTheoremCadlagContinuousVersion}, from the corresponding properties of the processes $Y^{(k)}$s  we can conclude that $Y$ is a $(\widetilde{\Phi_{\vartheta_{X}}})'_{\beta}$-valued continuous process satisfying the properties \emph{(a)} and \emph{(b)} of the Theorem. 
Finally, as in the proof of Theorem \ref{theoRegularizationTheoremCadlagContinuousVersion} we can show that as a $\Phi'_{\beta}$-valued process $Y$ is unique up to indistinguishable versions. 
\end{prf}

The following result is an specialized version of Theorem \ref{theoExistenceCadlagContVersionHilbertSpaceFiniteMoments} that establish conditions for the existence of continuous or c\`{a}dl\`{a}g version in single Hilbert space $\Phi'_{\varrho}$ with  uniform finite moments in every bounded interval of time. 
 
\begin{theo} \label{theoExistenceCadlagContVersionHilbertSpaceUniformBoundedMoments}
Let $X=\{X_{t} \}_{t \geq 0}$ be a cylindrical process in $\Phi'$ satisfying:
\begin{enumerate}
\item For each $\phi \in \Phi$, the real-valued process $X(\phi)=\{ X_{t}(\phi) \}_{t \geq 0}$ has a continuous (respectively c\`{a}dl\`{a}g) version.
\item There exists $n \in \N$ and a continuous Hilbertian semi-norm $\varrho$ on $\Phi$ such that for all $T>0$ there exists $C(T)>0$ such that 
\begin{equation} \label{uniformBoundMomentsByHilbertSeminorm}
\Exp \left( \sup_{t \in [0,T]} \abs{X_{t}(\phi)}^{n} \right) \leq C(T) \varrho(\phi)^{n}, \quad \forall \, \phi \in \Phi.
\end{equation} 
\end{enumerate}
Then, there exists a continuous Hilbertian semi-norm $q$ on $\Phi$, $\varrho \leq q$, such that $i_{\varrho,q}$ is Hilbert-Schmidt and there exists a $\Phi'_{q}$-valued continuous (respectively c\`{a}dl\`{a}g) process, satisfying:
\begin{enumerate}[label=(\alph*)]
\item For every $\phi \in \Phi$, $Y[\phi]= \{ Y_{t}[\phi] \}_{t \geq 0}$ is a version of $X(\phi)= \{ X_{t}(\phi) \}_{t \geq 0}$, 
\item For every $T>0$, $\Exp \left( \sup_{t \in [0,T]} q'(Y_{t})^{n} \right) < \infty$.   
\end{enumerate} 
Furthermore, $Y$ is a $\Phi'_{\beta}$-valued continuous (respectively c\`{a}dl\`{a}g) version of $X$ that is unique up to indistinguishable versions.
\end{theo}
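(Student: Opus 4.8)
The plan is to reduce the statement to Theorem~\ref{theoRegularTheoHilbertSpaceCadlagContinuousVersion} and then to upgrade the resulting Hilbert-space-valued version to one with finite $n$-th moments by means of the Pietsch domination theorem, precisely as in Step~1 of the proof of Theorem~\ref{theoExistenceCadlagContVersionHilbertSpaceFiniteMoments}. The decisive simplification, compared with that theorem, is that hypothesis~(2) controls the moments by a \emph{single} Hilbertian semi-norm $\varrho$, so the target Hilbert space will not depend on $T$ and no countably Hilbertian gluing is needed.

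First I would observe that hypothesis~(2) forces each $X_{t}$ to be $\varrho$-continuous. Indeed, fixing $T>0$ and combining the Markov inequality with \eqref{uniformBoundMomentsByHilbertSeminorm}, for every $t \in [0,T]$ and $\epsilon>0$ we obtain $\Prob\left( \abs{X_{t}(\phi)} > \epsilon \right) \leq \epsilon^{-n} C(T) \varrho(\phi)^{n}$, so $X_{t}(\phi) \to 0$ in probability whenever $\varrho(\phi) \to 0$. Hence $X$ satisfies the hypotheses of Theorem~\ref{theoRegularTheoHilbertSpaceCadlagContinuousVersion} with $p=\varrho$, and that result yields a continuous Hilbertian semi-norm $r$ with $\varrho \leq r$, $i_{\varrho,r}$ Hilbert-Schmidt, together with a $\Phi'_{r}$-valued continuous (respectively c\`{a}dl\`{a}g) process $\widehat{Y}=\{ \widehat{Y}_{t} \}_{t \geq 0}$ such that $\widehat{Y}[\phi]$ is a version of $X(\phi)$ for every $\phi \in \Phi$. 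Since $\widehat{Y}$ is already defined on all of $[0,\infty)$ and lives in the fixed Hilbert space $\Phi'_{r}$, the $T_{k}$-gluing used in the earlier proofs is unnecessary here.

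Next I would choose a continuous Hilbertian semi-norm $q$ with $r \leq q$ and $i_{r,q}$ Hilbert-Schmidt; then $i'_{r,q} : \Phi'_{r} \rightarrow \Phi'_{q}$ is Hilbert-Schmidt, hence $n$-summing, and the Pietsch domination theorem supplies a constant $C>0$ and a Radon probability measure $\nu$ on the unit ball $B^{*}_{r}(1)$ of $\Phi_{r}$ with $q'(i'_{r,q} f)^{n} \leq C^{n} \int_{B^{*}_{r}(1)} \abs{f[\phi]}^{n} \, \nu(d\phi)$ for all $f \in \Phi'_{r}$. Putting $Y_{t}=i'_{r,q} \widehat{Y}_{t}$ gives a $\Phi'_{q}$-valued continuous (respectively c\`{a}dl\`{a}g) process for which $Y[\phi]$ is a version of $X(\phi)$, and the crucial estimate follows from the same interchange of supremum, expectation and $\nu$-integral as in Theorem~\ref{theoExistenceCadlagContVersionHilbertSpaceFiniteMoments}:
\begin{equation*}
\Exp \left( \sup_{t \in [0,T]} q'(Y_{t})^{n} \right) \leq C^{n} \int_{B^{*}_{r}(1)} \Exp \left( \sup_{t \in [0,T]} \abs{\widehat{Y}_{t}[\phi]}^{n} \right) \nu(d\phi).
\end{equation*}
This is where hypothesis~(2) acts independently of $T$: since $\widehat{Y}[\phi]$ is indistinguishable from the continuous version of $X(\phi)$, the inner expectation equals $\Exp \left( \sup_{t \in [0,T]} \abs{X_{t}(\phi)}^{n} \right) \leq C(T) \varrho(\phi)^{n}$, and for $\phi \in B^{*}_{r}(1)$ one has $\varrho(\phi) \leq r(\phi) \leq 1$; as $\nu$ is a probability measure the bound collapses to $C^{n} C(T) < \infty$, which is property~(b) with the \emph{same} $q$ for every $T$. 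Finally $i_{\varrho,q}=i_{\varrho,r} \circ i_{r,q}$ is Hilbert-Schmidt, so $q$ has all the required properties.

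For the uniqueness I would argue exactly as in the previous theorems: any competing process $Z$ satisfying the conclusions is, like $Y$, $\Phi'_{q}$-valued and therefore regular as a $\Phi'_{\beta}$-valued process, while $Y[\phi]=X(\phi)=Z[\phi]$ as versions for every $\phi \in \Phi$, so Proposition~\ref{propCondiIndistingProcess} forces $Y$ and $Z$ to be indistinguishable in $\Phi'_{\beta}$. The only genuinely delicate point is the moment estimate above, where one must make sure the Pietsch measure $\nu$ and the semi-norm $q$ are fixed once and for all (they depend only on $r$, not on $T$), so that all $T$-dependence is confined to the harmless factor $C(T)$; everything else reduces to the already-established Theorem~\ref{theoRegularTheoHilbertSpaceCadlagContinuousVersion} and to Tonelli's theorem for non-negative integrands.
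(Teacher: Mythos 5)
Your argument is correct and rests on the same two pillars as the paper's proof---Theorem \ref{theoRegularTheoHilbertSpaceCadlagContinuousVersion} followed by the Pietsch domination theorem for the Hilbert--Schmidt (hence $n$-summing) inclusion $i'_{r,q}$---but it is organized in a genuinely different, and leaner, way. The paper fixes $T>0$, observes that \eqref{uniformBoundMomentsByHilbertSeminorm} says exactly that the semi-norm $p$ of \eqref{defiSemiNormPUniformMoments} satisfies $p \leq C(T)^{1/n}\varrho$, reruns Step 1 of the proof of Theorem \ref{theoExistenceCadlagContVersionHilbertSpaceFiniteMoments} on each interval $[0,T_{k}]$ (taking care to note that the resulting semi-norm $q$ depends only on $\varrho$, not on $T$), and finally glues the interval versions along $T_{k}\uparrow\infty$ using Proposition \ref{propCondiIndistingProcess}. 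You instead exploit the fact that hypothesis (2) makes \emph{every} $X_{t}$, $t\geq 0$, $\varrho$-continuous for one fixed $\varrho$, so that Theorem \ref{theoRegularTheoHilbertSpaceCadlagContinuousVersion} can be invoked once, globally on $[0,\infty)$, to produce $\widehat{Y}$ in a single Hilbert space $\Phi'_{r}$; one application of Pietsch domination plus Tonelli then yields the moment bound for all $T$ simultaneously, with the $T$-dependence confined to the factor $C(T)$. This removes the $T_{k}$-gluing entirely and also removes the need to inspect the earlier proofs to confirm that $q$ can be chosen independently of $T$---in your arrangement that independence is automatic.

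One step deserves more care than you give it. In your key estimate the integration variable $\phi$ runs over $B^{*}_{r}(1)\subset\Phi_{r}$, whereas hypothesis (2), and the identification of $\widehat{Y}[\phi]$ with the continuous version of $X(\phi)$, are available only for $\phi\in\Phi$ (equivalently, for elements of the form $i_{r}\psi$). For a general $\phi$ in the unit ball of $\Phi_{r}$ you should approximate $\phi$ by $i_{r}\psi_{k}$ with $\psi_{k}\in\Phi$, use that $\sup_{t\in[0,T]}r'(\widehat{Y}_{t})<\infty$ almost surely (continuity of the paths in $\Phi'_{r}$ on a compact interval) to get $\sup_{t\in[0,T]}\abs{\widehat{Y}_{t}[i_{r}\psi_{k}]-\widehat{Y}_{t}[\phi]}\to 0$ a.s., and conclude by Fatou's lemma that $\Exp \left( \sup_{t\in[0,T]}\abs{\widehat{Y}_{t}[\phi]}^{n}\right) \leq C(T)\liminf_{k}\varrho(\psi_{k})^{n}\leq C(T)$. (Also, your ``equals'' should strictly be ``is at most'': for each fixed $t$ one has $X_{t}(\phi)=\widehat{Y}_{t}[\phi]$ a.s., which bounds the supremum of the continuous version by that of $X$, and that inequality is all you need.) Neither point is a real obstruction---the fix is routine, and the paper's own proof of Theorem \ref{theoExistenceCadlagContVersionHilbertSpaceFiniteMoments} glosses over the identical issue when it asserts that $\phi\mapsto\widehat{Y}[\phi]$ is a continuous map from $\Phi_{r}$ into $C^{n}_{T}(\R)$---so your proposal meets the paper's own standard of rigor while being structurally simpler.
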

\begin{prf} As before, we will prove the continuous case as the c\`{a}dl\`{a}g case follows similarly.
 
Fix $T>0$. Note that \eqref{uniformBoundMomentsByHilbertSeminorm} is equivalent to $p \leq C(T)^{1/n} \varrho$, where $p$ is the semi-norm defined in \eqref{defiSemiNormPUniformMoments}. Therefore, the map from $\Phi$ into $C_{T}(\R)$ given by $\phi \mapsto X(\phi)=\{ X_{t}(\phi)\}_{t \in [0,T]}$ is $\varrho$-continuous. Hence, similar arguments to those in Step 1 of the proof of Theorem  \ref{theoExistenceCadlagContVersionHilbertSpaceFiniteMoments} show that there exists a continuous Hilbertian semi-norm $q$ on $\Phi$ (only depending on $\varrho$), $\varrho \leq q$, such that $i_{\varrho,q}$ is Hilbert-Schmidt, and a $\Phi'_{q}$-valued continuous version $Y= \{ Y_{t} \}_{t \in [0,T]}$ of $\{ X_{t} \}_{t \in [0,T]}$ satisfying $\Exp \left( \sup_{t \in [0,T]} q'(Y_{t})^{n} \right) < \infty$. 

Let $\{ T_{k} \}_{k \in \N}$ an increasing sequence of positive numbers such that $\lim_{k \rightarrow \infty} T_{k}= \infty$. Then, as proved in the above paragraph for each $k \in \N$ there exists a $\Phi'_{q}$-valued continuous version $Y^{(k)}= \{ Y^{(k)}_{t} \}_{t \in [0,T_{k}]}$ of $\{ X_{t} \}_{t \in [0,T_{k}]}$ that satisfies $\Exp \left( \sup_{t \in [0,T_{k}]} q'(Y^{(k)}_{t})^{n} \right) < \infty$. Observe that for each $k \in \N$, the $\Phi'_{q}$-valued continuous processes $\{ Y^{(k)}_{t} \}_{t \in [0,T_{k}]}$ and $\{ Y^{(k+1)}_{t} \}_{t \in [0,T_{k}]}$ are indistinguishable. Therefore, if we define $Y=\{ Y_{t} \}_{t \geq 0}$ by the prescription $Y_{t}=Y^{(k)}_{t}$ if $t \in [0,T_{k}]$, then $Y$ is a $\Phi'_{q}$-valued continuous version of $X$ satisfying $\Exp \left( \sup_{t \in [0,T]} q'(Y_{t})^{n} \right) < \infty$, for all $T>0$.       
\end{prf}

\section{Applications to Cylindrical Martingales in $\Phi'$} \label{sectionApplicaStochProcesses}

In this section we apply the results in Section \ref{sectionContCadVersionHilbertSpace} to study some properties of cylindrical martingales in $\Phi'$. Consider a filtration $\left\{ \mathcal{F}_{t} \right\}_{t \geq 0}$ on $\ProbSpace$ that satisfies the \emph{usual conditions}, i.e. it is right continuous and $\mathcal{F}_{0}$ contains all sets of $\mathcal{F}$ of $\Prob$-measure zero. 

Let $n \geq 1$. Denote by $\mathcal{M}^{n}(\R)$ the space of all the real-valued martingales $M=\{ M_{t} \}_{t \geq 0}$ satisfying 
\begin{equation} \label{defiSemiNormSpaceBoundedMartingales}
\norm{ M}_{\mathcal{M}^{n}(\R)}= \left[ \Exp \left( \sup_{t \geq 0} \abs{M_{t}}^{n} \right) \right]^{1/n} < \infty. 
\end{equation}
It is clear that $\norm{ \cdot }_{\mathcal{M}^{n}(\R)}$ defines a norm on $\mathcal{M}^{n}(\R)$. Moreover, $( \mathcal{M}^{n}(\R), \norm{ \cdot }_{\mathcal{M}^{n}(\R)})$ is a Banach space (see \cite{DellacherieMeyer}). Note that if the real-valued martingale $M=\{ M_{t} \}_{t \geq 0}$ satisfies $\sup_{t \geq 0} \Exp \left( \abs{M_{t}}^{n} \right)< \infty$, for $n >2$, then Doob's inequality shows that $\norm{ M}_{\mathcal{M}^{n}(\R)}< \infty$ and hence $M \in \mathcal{M}^{n}(\R)$.  

\begin{defi}
A \emph{cylindrical martingale} in $\Phi'$ is a cylindrical process $M=\{ M_{t} \}_{t \geq 0}$ such that for each $\phi \in \Phi$, the real-valued process $M(\phi)= \{ M_{t}(\phi) \}_{t \geq 0}$ is a $\{\mathcal{F}_{t} \}$-adapted martingale. 
\end{defi} 

The following theorem contains some of the basic properties of cylindrical martingales in $\Phi'$.

\begin{theo}\label{propertiesMartingales} 
Let $M=\left\{ M_{t} \right\}_{t\geq 0}$ be a cylindrical martingale in $\Phi'$ such that for each $t \geq 0$ the map $M_{t}: \Phi \rightarrow L^{0} \ProbSpace$ is continuous. Then, $M$ has a $\Phi'_{\beta}$-valued \cadlag version $\widetilde{M}=\{ \widetilde{M}_{t} \}_{t\geq 0}$. Moreover, we have the following:
\begin{enumerate}
\item If $M$ is $n$-th integrable, for $n \geq 2$, then for each $T>0$ there exists a continuous Hilbertian semi-norm $q_{T}$ on $\Phi$ such that $\{ \widetilde{M}_{t} \}_{t \in [0, T] }$ is a $\Phi'_{q_{T}}$-valued \cadlag martingale satisfying $\Exp \left( \sup_{t \in [0, T] } q'_{T} ( \widetilde{M}_{t} )^{n} \right) < \infty$.  
\item Moreover, if for $n \geq 2$, $\sup_{t \geq 0} \Exp \left( \abs{M_{t}(\phi)}^{n} \right)< \infty$ for each $\phi \in \Phi$, then there exists a continuous Hilbertian semi-norm $q$ on $\Phi$ such that $\widetilde{M}$ is a $\Phi'_{q}$-valued c\`{a}dl\`{a}g martingale satisfying $\Exp \left( \sup_{t \geq 0}  q'( \widetilde{M}_{t} )^{n} \right) < \infty$.    
\end{enumerate}
If for each $ \phi \in \Phi$ the real-valued process $\{ M_{t}(\phi) \}_{t \geq 0}$ has a continuous version, then $\widetilde{M}$ can be chosen to be continuous and such that it satisfies \emph{(1)}$-$\emph{(2)} above replacing the property c\`{a}dl\`{a}g by continuous. 
\end{theo}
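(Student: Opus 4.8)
The plan is to produce $\widetilde{M}$ by verifying the hypotheses of the Regularization Theorem (Theorem \ref{theoRegularizationTheoremCadlagContinuousVersion}), and then, under the moment assumptions, to sharpen the conclusion using Theorems \ref{theoExistenceCadlagContVersionHilbertSpaceFiniteMoments} and \ref{theoExistenceCadlagContVersionHilbertSpaceUniformBoundedMoments}. First, because the filtration $\{\mathcal{F}_{t}\}$ satisfies the usual conditions, each real-valued martingale $M(\phi)=\{M_{t}(\phi)\}_{t\geq 0}$ admits a \cadlag modification; hence hypothesis \emph{(1)} of Theorem \ref{theoRegularizationTheoremCadlagContinuousVersion} holds (and holds with ``continuous'' whenever each $M(\phi)$ already has a continuous version, which covers the last sentence). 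What remains, and what I expect to be the main obstacle, is hypothesis \emph{(2)}: that for every $T>0$ the family $\{M_{t}:t\in[0,T]\}$ of maps from $\Phi$ into $L^{0}\ProbSpace$ is equicontinuous, since the hypothesis only grants continuity of each individual $M_{t}$.

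To establish equicontinuity I would exploit the martingale identity $M_{t}(\phi)=\Exp[M_{T}(\phi)\mid\mathcal{F}_{t}]$ for $t\leq T$ together with Doob's weak maximal inequality, which gives $\lambda\,\Prob(\sup_{t\leq T}\abs{M_{t}(\phi)}>\lambda)\leq \Exp\abs{M_{T}(\phi)}$ for all $\lambda>0$; moreover the family $\{M_{t}(\phi):t\leq T\}$, being the conditional expectations of the fixed $L^{1}$ random variable $M_{T}(\phi)$, is uniformly integrable. Consequently equicontinuity (indeed, uniform control of the maximal function) follows once one knows that $\phi\mapsto M_{T}(\phi)$ is continuous from $\Phi$ into $L^{1}\ProbSpace$. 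The delicate point is precisely this upgrade from $L^{0}$-continuity to $L^{1}$-continuity: the map has closed graph, because $L^{1}$-convergence forces convergence in probability and $M_{T}$ is already $L^{0}$-continuous, so I would obtain continuity by the closed graph theorem after passing to a weaker countably Hilbertian (hence metrizable, complete, Baire) topology, in the spirit of the argument used in Lemma \ref{lemmContiXAsMapToSpaceContiProcesses}. Granting this, Theorem \ref{theoRegularizationTheoremCadlagContinuousVersion} furnishes a $\Phi'_{\beta}$-valued, regular, \cadlag version $\widetilde{M}$ with $\widetilde{M}_{t}[\phi]=M_{t}(\phi)$ $\Prob$-a.e.\ for every $\phi$, proving the first assertion.

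For \emph{(1)}, the hypothesis of $n$-th integrability with $n\geq 2$ combines with Doob's $L^{n}$ inequality to yield $\Exp(\sup_{t\in[0,T]}\abs{M_{t}(\phi)}^{n})\leq (n/(n-1))^{n}\,\Exp\abs{M_{T}(\phi)}^{n}<\infty$, which is exactly assumption \emph{(3)} of Theorem \ref{theoExistenceCadlagContVersionHilbertSpaceFiniteMoments}; that theorem then produces, for each $T$, a continuous Hilbertian semi-norm $q_{T}$ and a $\Phi'_{q_{T}}$-valued \cadlag version with $\Exp(\sup_{t\in[0,T]}q_{T}'(\widetilde{M}_{t})^{n})<\infty$. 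For \emph{(2)}, the stronger bound $\sup_{t\geq 0}\Exp\abs{M_{t}(\phi)}^{n}<\infty$ makes the $\mathcal{M}^{n}(\R)$-norm $\phi\mapsto \norm{M(\phi)}_{\mathcal{M}^{n}(\R)}=[\Exp(\sup_{t\geq 0}\abs{M_{t}(\phi)}^{n})]^{1/n}$ finite (Doob over $[0,\infty)$) and, by a closed graph argument into the Banach space $\mathcal{M}^{n}(\R)$ on the weaker metrizable topology, a continuous semi-norm; dominating it by a continuous Hilbertian semi-norm $\varrho$ verifies hypothesis \emph{(2)} of Theorem \ref{theoExistenceCadlagContVersionHilbertSpaceUniformBoundedMoments} with $C(T)$ constant, whence a single $\Phi'_{q}$-valued \cadlag version with $\Exp(\sup_{t\geq 0}q'(\widetilde{M}_{t})^{n})<\infty$.

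It then remains to identify $\widetilde{M}$ as a martingale in the Hilbert space. Since $\widetilde{M}_{t}$ takes values in $\Phi'_{q_{T}}$ (resp.\ $\Phi'_{q}$) and has finite $n$-th moment, it is Bochner integrable, and for $s<t$ one has, for each $\phi$, $\Exp[\widetilde{M}_{t}\mid\mathcal{F}_{s}][\phi]=\Exp[\widetilde{M}_{t}[\phi]\mid\mathcal{F}_{s}]=\Exp[M_{t}(\phi)\mid\mathcal{F}_{s}]=M_{s}(\phi)=\widetilde{M}_{s}[\phi]$ $\Prob$-a.e.; as both $\Exp[\widetilde{M}_{t}\mid\mathcal{F}_{s}]$ and $\widetilde{M}_{s}$ are regular $\Phi'_{\beta}$-valued random variables, Proposition \ref{propCriteriaVersionRegularRV} upgrades this to the Hilbert-space identity $\Exp[\widetilde{M}_{t}\mid\mathcal{F}_{s}]=\widetilde{M}_{s}$, so $\widetilde{M}$ is a \cadlag martingale in $\Phi'_{q_{T}}$ (resp.\ $\Phi'_{q}$). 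Uniqueness up to indistinguishability follows from Proposition \ref{propCondiIndistingProcess}. Finally, when each $M(\phi)$ has a continuous version, the identical argument applies with ``continuous'' replacing ``\cadlag'' throughout, giving the continuous analogues of \emph{(1)}--\emph{(2)}.
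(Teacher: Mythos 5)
Your construction of $\widetilde{M}$ and your proof of part \emph{(1)} follow the paper's own route essentially verbatim: \cadlag versions of each $M(\phi)$ from the usual conditions; equicontinuity of $\{ M_{t} : t \in [0,T] \}$ obtained from Doob's weak maximal inequality once $\phi \mapsto M_{T}(\phi)$ is upgraded from $L^{0}$- to $L^{1}$-continuity by a closed graph argument on a weaker countably Hilbertian topology; then Theorem \ref{theoRegularizationTheoremCadlagContinuousVersion} for the existence of $\widetilde{M}$, and Doob's $L^{n}$-inequality combined with Theorem \ref{theoExistenceCadlagContVersionHilbertSpaceFiniteMoments} for part \emph{(1)}. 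Your verification that $\widetilde{M}$ is a martingale in the Hilbert space (Bochner integrability, testing the conditional expectation against each $\phi \in \Phi$, and Proposition \ref{propCriteriaVersionRegularRV}) is in fact spelled out more carefully than in the paper, which leaves this point largely implicit.

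Part \emph{(2)}, however, contains a genuine gap. You verify hypothesis \emph{(2)} of Theorem \ref{theoExistenceCadlagContVersionHilbertSpaceUniformBoundedMoments} with a constant $C(T)\equiv C$ and then assert that this theorem yields a version satisfying $\Exp \left( \sup_{t \geq 0} q'(\widetilde{M}_{t})^{n} \right) < \infty$. It does not: its conclusion \emph{(b)} only gives $\Exp \left( \sup_{t \in [0,T]} q'(Y_{t})^{n} \right) < \infty$ \emph{for every} $T>0$, and finiteness on every bounded interval does not imply finiteness of the supremum over $[0,\infty)$ --- nothing in the statement prevents these expectations from diverging as $T \rightarrow \infty$. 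That global bound is exactly what part \emph{(2)} asserts beyond part \emph{(1)}, so the statement you cite cannot deliver it. The paper closes this point by a different chain: having the continuous semi-norm $p(\phi)=\norm{M(\phi)}_{\mathcal{M}^{n}(\R)} \leq \varrho(\phi)$ (which you also construct), Chebyshev's inequality makes every $M_{t}$, $t \geq 0$, $\varrho$-continuous, so Theorem \ref{theoRegularTheoHilbertSpaceCadlagContinuousVersion} produces a $\Phi'_{r}$-valued \cadlag martingale version $\widehat{M}$ for which $\phi \mapsto \widehat{M}(\phi)$ is continuous from $\Phi_{r}$ into $\mathcal{M}^{n}(\R)$; one then re-runs the Pietsch domination argument of Step 1 of the proof of Theorem \ref{theoExistenceCadlagContVersionHilbertSpaceFiniteMoments} with $\norm{\cdot}_{\mathcal{M}^{n}(\R)}$ in place of $\norm{\cdot}_{n,T}$: choosing $q$ with $r \leq q$ and $i_{r,q}$ Hilbert-Schmidt, so that $i'_{r,q}$ is $n$-summing, gives $\Exp \sup_{t \geq 0} q'(i'_{r,q}\widehat{M}_{t})^{n} \leq C^{n} \norm{\widehat{M}}^{n}_{\mathcal{L}(\Phi_{r},\mathcal{M}^{n}(\R))} < \infty$. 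Alternatively, your route could be repaired by inspecting the \emph{proof} (not the statement) of Theorem \ref{theoExistenceCadlagContVersionHilbertSpaceUniformBoundedMoments}: when $C(T)$ is constant, the bounds it produces on $\Exp \left( \sup_{t \in [0,T_{k}]} q'(Y_{t})^{n} \right)$ are uniform in $k$, and monotone convergence then yields the supremum over all $t \geq 0$; but this additional argument is precisely what your proposal omits.
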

\begin{prf} To show the existence of the $\Phi'_{\beta}$-valued \cadlag version $\widetilde{M}$ of $M$ we need to check that the conditions in Theorem \ref{theoRegularizationTheoremCadlagContinuousVersion} are satisfied. As for each $\phi \in \Phi$, $M_{t}(\phi)$ is a real-valued martingale, then it has a c\`{a}dl\`{a}g version. Then it only remains to verify that for every $T>0$ the family $\{ M_{t}: t \in [0,T]\} \subseteq  \mathcal{L}(\Phi,L^{0} \ProbSpace)$ is equicontinuous. 

Let $T>0$. First note that because for all $\phi \in \Phi$, $\{ \abs{M_{t}(\phi)}\}_{t \geq 0}$ is a positive submartingale, then by Doob's tail martingale inequality we have for any $\epsilon>0$, 
\begin{equation} \label{martingTailInequality}
\Prob \left( \sup_{t \in [0,T]} \abs{M_{t}(\phi)} > \epsilon \right) \leq \frac{1}{\epsilon} \Exp ( \abs{M_{T}(\phi)}). 
\end{equation} 
Define $p_{T}: \Phi \rightarrow [0,\infty)$ by $p_{T}(\phi)=  \Exp ( \abs{M_{T}(\phi)} )$, for all $\phi \in \Phi$. Then, $p_{T}$ is a semi-norm on $\Phi$. Moreover, from the continuity of the map $M_{T}: \Phi \rightarrow L^{0} \ProbSpace$ and 
by following similar ideas to those used in the proof of Lemma \ref{lemmContiXtOnCHT}, we can show that there exists a countably Hilbertian topology $\theta$ on $\Phi$ such that the map $M_{T}$ is $\theta$-continuous. By means of an application of the closed graph theorem as in the proof of Lemma \ref{lemmContiXAsMapToSpaceContiProcesses} we can show that $M_{T}$ is continuous as a map from $\Phi_{\theta}$ into $L^{1} \ProbSpace$. This in turns implies that $p_{T}$ is continuous. Then, it follows from \eqref{martingTailInequality} that the family $\{ M_{t}: t \in [0,T]\} \subseteq  \mathcal{L}(\Phi,L^{0} \ProbSpace)$ is equicontinuous. Therefore, the existence of the $\Phi'_{\beta}$-valued \cadlag version $\widetilde{M}$ of $M$ is a consequence of Theorem \ref{theoRegularizationTheoremCadlagContinuousVersion}. 

If $M$ is $n$-th integrable, for $n \geq 2$, then it follows from Doob's inequality that for every $T>0$ and $\phi \in \Phi$ we have 
$$ \Exp \left( \sup_{t \in [0,T]} \abs{M_{t}(\phi)}^{n} \right) \leq m^{n} \, \Exp ( \abs{M_{T}(\phi)}^{n}) < \infty, $$
where $1/n+1/m=1$. Then, \emph{(1)} follows from Theorem \ref{theoExistenceCadlagContVersionHilbertSpaceFiniteMoments}. 
%

To prove \emph{(2)}, note that for each $\phi \in \Phi$ the condition $\sup_{t \geq 0} \Exp \left( \abs{M_{t}(\phi)}^{n} \right)< \infty$ implies that $M(\phi)=\{ M_{t}(\phi) \}_{t \geq 0} \in \mathcal{M}^{n}(\R)$. Then, $M$ defines a linear map $\phi \mapsto M(\phi)$ from $\Phi$ into $\mathcal{M}^{n}(\R)$. We can show that this map is continuous by using similar arguments to those used in Lemma \ref{lemmContiXAsMapToSpaceContiProcesses}. Then, it follows from \eqref{defiSemiNormSpaceBoundedMartingales} that $p:\Phi \rightarrow [0,\infty)$ defined by 
$$ p(\phi)= \left[ \Exp \left( \sup_{t \geq 0} \abs{M_{t}[\phi]}^{n} \right) \right]^{1/n}, \quad \forall \, \phi \in \Phi, $$
is a continuous semi-norm on $\Phi$. Let $\varrho$ be a continuous Hilbertian semi-norm on $\Phi$ such that $p \leq \varrho$. Then, it follows from the definition of $p$ and Chebyshev's inequality that for every $\epsilon >0$ we have 
$$ \Prob \left( \sup_{t \geq 0} \abs{M_{t}(\phi)} > \epsilon \right) \leq \frac{1}{\epsilon^{n}} p(\phi)^{n} \leq \frac{1}{\epsilon^{n}} \varrho(\phi)^{n}, \quad \forall \, \phi \in \Phi.$$ 
Therefore, for every $t \geq 0$ the map $M_{t}:\Phi \rightarrow L^{0} \ProbSpace$ is $\varrho$-continuous. Hence, Theorem \ref{theoRegularTheoHilbertSpaceCadlagContinuousVersion} shows that 
there exists a continuous Hilbertian semi-norm $r$ on $\Phi$, $\varrho \leq r$ such that $i_{r,\varrho}$ is Hilbert-Schmidt and such that $M$ has a $\Phi'_{r}$-valued c\`{a}dl\`{a}g version $\widehat{M}=\{ \widehat{M}_{t} \}_{t\geq 0}$ that is a martingale. Moreover, because $p \leq r$ is clear that $\widehat{M}$ defines a continuous and linear map $\phi \mapsto \widehat{M}(\phi)$ from $\Phi_{r}$ into $\mathcal{M}^{n}(\R)$. Then, from a modification of the arguments in Step 2 of the proof of Theorem \ref{theoExistenceCadlagContVersionHilbertSpaceFiniteMoments}, we can show that there exists a continuous Hilbertian semi-norm $q$ on $\Phi$, $r \leq q$, such that $M$ has a $\Phi'_{q}$-valued c\`{a}dl\`{a}g version $\widetilde{M}=\{ \widetilde{M}_{t} \}_{t\geq 0}$ that is a martingale and satisfies $ \Exp \left( \sup_{t \geq 0}  q'( \widetilde{M}_{t} )^{n} \right) < \infty$. 
\end{prf}

\begin{rema}
The results in Theorem \ref{propertiesMartingales} (for the case $n=2$) were originally proved by Mitoma (see \cite{Mitoma:1981}; see also \cite{KallianpurXiong}, Theorems 3.1.3-4) for the case of martingales in the strong dual of a nuclear Fr\'{e}chet space. Note that we have been able to extend these results to any nuclear space and for $n$-integrable martingales for any $n \geq 2$.   
\end{rema}

\textbf{Acknowledgements} I want to express my deepest gratitude to my PhD Supervisor David B. Applebaum for all his advice, comments and suggestions that contributed to improve this work. Also, I would like to thank both the School of Mathematics and Statistics (SoMaS) of The University of Sheffield and the Office of International Affairs and External Cooperation (OAICE) of The University of Costa Rica for providing financial support.


\begin{thebibliography}{HD}



\bibitem{ApplebaumRiedle:2010} D. Applebaum and M. Riedle, Cylindrical L\'{e}vy processes in Banach spaces, \emph{Proc. London Math. Soc.}, {101}(3): 697--726, 2010.  

\bibitem{Badrikian} A. Badrikian, \emph{S\'{e}minaire Sur les Fonctions Al\'{e}atoires Lin\'{e}aires et les Mesures Cylindriques}, Lecture Notes in Math. 139, Springer, 1970.   
  
\bibitem{BojdeckiJakubowski:1990} T. Bojdecki and J. Jakubowski, Stochastic Integration for Inhomogeneous Wiener Process in the Dual of a Nuclear Space, \emph{J. Multivariate Anal.}, {34}:185--210, 1990. 

\bibitem{BojdeckiTalarczyk:2005} T. Bojdecki and A. Talarczyk, Particle picture approach of self-intersection local time for density processes in $\mathcal{S}'(\R^{d})$, \emph{Stoch. Proc. Appl.}, {115}:449--479, 2005. 
      
\bibitem{DaleckyFomin} Yu. L. Dalecky and S. V. Fomin, \emph{Measure and Differential Equations in Infinite-Dimensional Space},  
Mathematics and Its Applications 76, Springer Science+Business Media, 1991.        

\bibitem{DellacherieMeyer} C. Dellacherie and P. A. Meyer, \emph{Probabilities and Potential B.  Theory of  Martingales}, North-Holland Publishing Company, 1982. 
      
\bibitem{DiestelJarchowTonge} J. Diestel and H. Jarchow and A. Tonge, \emph{Absolutely Summing Operators},   
Cambridge Series in Advanced Mathematics 43, Cambridge University Press, 1995. 

\bibitem{Fernique:1989} X. Fernique, Fonctions al\'{e}atoires \`{a} valeurs dans certains espaces nucl\'{e}aires, \emph{Probab. Th. Rel. Fields}, {83}:87--99,1989. 

\bibitem{FonsecaMoraThesis} C. A. Fonseca-Mora, \emph{Stochastic Analysis with L\'{e}vy Noise in the Dual of a Nuclear Space},  
PhD Thesis, The University of Sheffield, 2015. 

\bibitem{Fouque:1984} J-P. Fouque, La convergence en loi pour les processus \`{a} valeurs dans un espace nucl\'{e}aire, \emph{Ann. Inst. H. Poincar\'{e}}, {20}(3):225--245, 1984. 

\bibitem{GelfandVilenkin} I. Gel'fand and N. Vilenkin, \emph{Generalized Functions. Volume. 4. Applications to Harmonic Analysis}, Academic Press, 1964. 

\bibitem{Heyer} H. Heyer, \emph{Structural Aspects in the Theory of Probability}, 
Series on Multivariate Analysis, World Scientific, second edition, 2010. 

\bibitem{Ito} K. It\^{o}, \emph{Foundations of Stochastic Equations in Infinite Dimensional Spaces},  
Series in Mathematics, SIAM, 1984.   

\bibitem{ItoNawata:1983} K. It\^{o} and M. Nawata, Regularization of linear random functionals, In \emph{Probability Theory and Mathematical Statistics. Proceedings of the Fourth USSR-Japan Symposium, Tbilisi, USSR, August 23-29, 1982},
Lecture Notes in Math. 1021, Springer, 1983, 257–-267. 
      
\bibitem{Jarchow} H. Jarchow, \emph{Locally Convex Spaces}, Mathematische Leitf\"{a}den, Springer, 1981. 

\bibitem{KallianpurXiong:1995} G. Kallianpur and J. Xiong, Diffusion approximation of nuclear space-valued stochastic differential equations driven by Poisson random measures, \emph{Ann. Appl. Probab.}, {5}(2):493--517, 1995. 
     
\bibitem{KallianpurXiong} G. Kallianpur and J. Xiong, \emph{Stochastic Differential Equations in Infinite Dimensional Spaces}, 
Lecture Notes-Monograph Series, Institute of Mathematical Statistics, 1995.  

\bibitem{KaspiRamanan:2013} H. Kaspi and K. Ramanan, SPDE limits of many-server queues, \emph{Ann. Appl. Probab.}, {23}(1):145--229, 2013. 

\bibitem{LedouxTalagrand} M. Ledoux and M. Talagrand, \emph{Probability in Banach Spaces. Isoperimetry and Processes},  
Ergebnisse der Mathematik und ihrer Grenzgebiete, Springer, 1991.
     
\bibitem{Linde} W. Linde, \emph{Probability in Banach Spaces. Stable and Infinitely Divisible Distributions}, Wiley, 1986.      
     
\bibitem{Martias:1988} C. Martias, Sur les supports des processus \`{a} valeurs dans des espaces nucl\'{e}aires, \emph{Ann. Inst. H. Poincar\'{e}}, {24}(3): 345--365, 1988. 

\bibitem{Minlos:1963} R. A. Minlos, Generalized random processes and their extension to a measure, In \emph{Selected Translations in Mathematical Statistics and Probability, Vol.3}, American Mathematical Society, 1963, 291-–313.
     
\bibitem{Mitoma:1981} I. Mitoma, Martingales of random distributions, \emph{Mem. Fac. Sci. Kyushu Univ. Ser. A}, {35}(1):185--197, 1981.     

\bibitem{Mitoma:1983} I. Mitoma, On the sample continuity of $\mathscr{S'}$-processes, \emph{J. Math. Soc. Japan}, {35}(4):629--636, 1983. 
     
\bibitem{MitomaOkadaOkazaki:1977} I. Mitoma and S. Okada and Y. Okazaki, Cylindrical $\sigma$-algebra and cylindrical measure, \emph{Osaka J. Math.}, {14}(3):635-647, 1977. 

\bibitem{NariciBeckenstein} L. Narici and E. Beckenstein, \emph{Topological Vector Spaces}, Pure and Applied Mathematics, CRC Press, second edition, 2011. 

\bibitem{PeszatZabczyk} S. Peszat and J. Zabczyk, \emph{Stochastic Partial Differential Equations with L\'{e}vy Noise}, Encyclopedia of Mathematics and Its Applications, Cambridge University Press, 2007. 

\bibitem{Pietsch} A. Pietsch, \emph{Nuclear Locally Convex Spaces}, Ergebnisse der Mathematikund ihrer Grenzgebiete, Springer, 1972.  

\bibitem{PriolaZabczyk:2011} E. Priola and J. Zabczyk, Structural properties of semilinear SPDEs driven by cylindrical stable processes, \emph{Probab. Theory Relat. Fields}, {149}(1-2):97--137, 2011. 

\bibitem{Riedle:2014} M. Riedle, Stochastic integration with respect to cylindrical L\'{e}vy processes in Hilbert spaces: an $L^{2}$ approach,  \emph{Infin. Dimens. Anal. Quantum. Probab. Relat. Top.}, {17}: 2014. 
           
\bibitem{Riedle:2015} M. Riedle, Ornstein-Uhlenbeck processes driven by cylindrical L\'{e}vy processes, \emph{Potential Anal.}, {42}:809--838, 2015. 

\bibitem{Sazonov:1958} V. V. Sazonov, \emph{A Remark on Characteristic Functionals}, \emph{Theory Probab. Appl.},
{3}:188-192, 1958. 

\bibitem{Schaefer} H. Schaefer, \emph{Topological Vector Spaces}, Graduate Texts in Mathematics, Springer, second edition, 1999. 

\bibitem{SchwartzRM} L. Schwartz, \emph{Radon Measures on Arbitrary Topological Spaces and Cylindrical Measures}, Tata Institute of Fundamental Research Studies in Mathematics, Oxford University Press, 1973. 
     
\bibitem{Treves} F. Tr\`{e}ves, \emph{Topological Vector Spaces, Distributions and Kernels}, Pure and Applied Mathematics, Academic Press, 1967. 
     
\bibitem{VakhaniaTarieladzeChobanyan} N. N. Vakhania and V. I. Tarieladze and S. A. Chobanyan, \emph{Probability Distributions on Banach Spaces}, Reidel Publishing, 1987.       
     
\bibitem{vanNeervenVeraarWeis:2008} J. M. A. M. van Neerven and M.C. Veraar and L. Weis, Stochastic evolution equations in UMD Banach spaces, \emph{Stoch. Process. Appl.},{255}(4):940--993, 2008.  
     
\end{thebibliography}
\end{document}